\documentclass[12pt]{amsart}
\usepackage[english]{babel}
\usepackage{amsfonts,amssymb}
\usepackage{dsfont}
\usepackage{verbatim}
\oddsidemargin = -5pt%
\evensidemargin = -5pt%
\textwidth = 17cm%
\newtheorem{thm}{Theorem}

\newtheorem{lemma}{Lemma}

\newtheorem{prop}{Proposition}
\theoremstyle{definition}
\newtheorem{defn}{Definition}


\newcommand{\set}[1]{\left\{#1\right\}}
\newcommand{\norm}[1]{\left\Vert#1\right\Vert}
\newcommand{\ov}[1]{\overline{#1}}
\newcommand{\vre}{\varepsilon}

\newcommand{\cX}{{\mathcal{X}}}
\newcommand{\cA}{{\mathcal{A}}}
\newcommand{\cB}{{\mathcal{B}}}

\newcommand{\cD}{{\mathcal{D}}}

\newcommand{\cF}{{\mathcal{F}}}

\newcommand{\cH}{{\mathcal{H}}}

\newcommand{\cK}{{\mathcal{K}}}
\newcommand{\cL}{{\mathcal{L}}}
\newcommand{\cN}{{\mathcal{N}}}

\newcommand{\cY}{{\mathcal{Y}}}
\newcommand{\cU}{{\mathcal{U}}}

\newcommand{\dC}{{\mathds{C}}}
\newcommand{\dN}{{\mathds{N}}}
\newcommand{\dR}{{\mathds{R}}}
\newcommand{\dT}{{\mathds{T}}}
\newcommand{\dZ}{{\mathds{Z}}}

\newcommand{\gY}{\mathfrak{Y}}
\newcommand{\gX}{\mathfrak{X}}

\newcommand{\gp}{\mathfrak{p}}
\newcommand{\got}{\mathfrak{t}}

\newcommand{\Ran}{\mathrm{Ran}}
\newcommand{\img}{\mathrm{\bf i}}
\newcommand{\supp}{\mathrm{supp}}

\newcommand{\Un}{\mathbf{1}}
\newcommand{\One}{\mathds{1}}
\newcommand{\tr}{\mathrm{tr}}
\newcommand{\muw}{\widetilde{\mu}}



\title{On $q$-normal operators and quantum complex plane.}

\begin{document}
\author{Jaka Cimpri\v c}
\address{University of Ljubljana, Department of Mathematics, Jadranska 19, SI-1000 Ljubljana, Slovenia}
\email{Jaka.Cimpric@fmf.uni-lj.si}

\author{Yurii Savchuk}
\address{Universit\"at Leipzig, Mathematisches Institut, Johannisgasse 26, 04103 Leipzig, Germany}
\email{savchuk@math.uni-leipzig.de}

\author{Konrad Schm\"udgen}
\address{Universit\"at Leipzig, Mathematisches Institut, Johannisgasse 26, 04103 Leipzig, Germany}
\email{schmuedgen@math.uni-leipzig.de}

\subjclass[2000]{Primary 14P99, 47L60; Secondary 14A22, 46L52, 11E25}

\date{\today}

\keywords{$q$-normal operator, quantum complex plane, $q$-moments problem}

\maketitle
\begin{abstract}
For $q>0$ let $\cA$ denote the unital $\ast$-algebra with generator $x$ and defining relation $xx^\ast=qxx^\ast$. Based on this algebra we study $q$-normal operators, the complex $q$-moment problem, positive elements and sums of squares.
\end{abstract}

\section{Introduction}
Suppose that $q$ is a positive real number. A densely defined closed linear operator $X$ on a Hilbert space is called \textit{ $q$-normal} if
\begin{gather}\label{relationxxastq}
XX^*=qX^*X.
\end{gather}
This and other classes of $q$-deformed operators have been introduced and investigated by S. Ota \cite{ota}, see e.g. \cite{osz2}. In this paper we continue the study of $q$-normal operators. Further, let $\cA$ denote the unital complex $\ast$-algebra with single generator $x$ and defining relation
\begin{gather}\label{relationxxastq1}
xx^\ast =q x^\ast x.
\end{gather}
The algebra $\cA$ appears in the theory of quantum groups where it is considered as the coordinate algebra of the $q$-deformed complex plane, or briefly, of the \textit{complex $q$-plane.} 

Let us set for a moment $q=1$. Then the $q$-normal operators are precisely the normal operators and $\cA$ is a complex polynomial algebra $\dC[x,\ov{x}]$. It is well known that there is a close relationship between various important algebraic and analytic problems:
\begin{itemize}
\item the complex moment problem,
\item the extension of formally normal operators to normal operators (possibly in a larger Hilbert space),
\item the characterization of well-behaved representations of the $\ast$-algebra $\dC[x,\overline{x}]$,
\item the representation of positive polynomials as sums of squares (motivated by 17-th Hilbert problem)
\end{itemize}
The aim of the present paper is to begin a study of these problems and their interplay in the $q$-deformed case.

We now discuss the contents of this paper. Section \ref{q-Normal operators} deals with $q$-normal operators. After giving some equivalent characterizations of $q$-normality we prove a structure theorem for $q$-normal operators (Theorem \ref{thm_spec}) which is the counter-part of the spectral theorem for unbounded normal operators.

Section \ref{positive} deals with positive $q$-polynomials and their possible representations as sums of squares. We define the cone $\cA_+$ of \textit{ positive} elements to be the set of elements which are mapped into positive symmetric operators by all well-behaved $\ast$-representations of the $\ast$-algebra $\cA$. A $\ast$-representation $\pi$ of $\cA$ with domain $\cD(\pi)$ is called \textit{well-behaved} if there exists a $q$-normal operator $X$ such that $\cD(\pi)=\cap_{n=1}^\infty \cD(X^n)$ and $\pi(x)=X \lceil \cD(\pi)$.
Theorem \ref{thm_motzkin1} states that for each positive $q{\neq} 1$ there exists a polynomial $p_q\in \dR[t]$ of degree four such that the element $f:=p_q(x+x^*)$ is in $\cA_+$, but $f$ is not a sum of squares in $\cA$. In contrast we prove that if $p\in\dR[t]$ and $p(x^*x)\in\cA_+$, then $p(x^*x)$ is always a sum of squares.

In Section \ref{q-momentproblem} we study a generalization of the complex moment problem to the $\ast$-algebra $\cA$. Let $F$ be a linear functional on $\cA$. We say that $F$ is a \textit{$q$-moment functional} if there exists a well-behaved $\ast$-representation $\pi$ of $\cA$ and a vector $\varphi \in \cD(\pi)$ such that $F(a)=\langle \pi(a)\varphi,\varphi\rangle$ for $ a\in\cA$. (In Section \ref{q-momentproblem} we use Theorem \ref{thm_spec} to give a formulation of $q$-moment functionals in terms of measures.) Further, $F$ is called \textit{ positive} if $F(f^\ast f)\geq 0$ for all $f\in \cA$ and \textit{strongly positive} if $F(f)\geq0$ for all $f\in \cA_+$. Then, by Theorem \ref{thm_hav}, a linear functional on $\cA$ is a $q$-moment functional if and only if it is strongly positive. This result can be considered as the counter-part of Haviland's theorem. In contrast, by Theorems \ref{notmomentfunctional} and \ref{prop_noext}, there exists a positive linear functional $F$ on $\cA$ which is not a $q$-moment functional and a formally $q$-normal operator which has no $q$-normal extensions.

In the commutative case $q=1$ the solution of Hilbert's 17-th problem (see e.g. \cite{m}) implies that positive polynomials of $\dC[x,\ov{x}]$ are sums of squares of rational functions. In
Section \ref{strictpositivstellensatz} we prove a Positivstellensatz (Theorem \ref{thm_strpos}) which states, roughly speaking, that \textit{strictly positive} elements of $\cA_+$ can be represented as sums of squares by allowing "nice" denominators.

We close this introduction by collecting some definitions and notations.

\noindent \textbf{Operators in Hilbert space.} For a linear operator $A$ on a Hilbert space operator we denote by $\cD(A),\ \Ran A,\ \overline{A}$ and $A^*$ denote its \emph{domain}, its \emph{range}, its \emph{closure} and its \emph{adjoint}, respectively, and we set $\cD^\infty(A):=\cap_{n=1}\cD(A^n).$ A \emph{core} of a closed operator $A$ is a linear subset $\cD_0\subseteq\cD(A)$ such that the closure of $A\upharpoonright\cD_0$ coincides with $A.$ If $A$ is self-adjoint, then $\cD^\infty(A)$ is a core of $A.$

A number $\lambda\in\dC$ is called a \textit{regular point} for an operator $A$ if there exists $c_\lambda >0$ such that
$\norm{(A-\lambda I)\varphi}\geq c_\lambda \norm{\varphi}$ for all $\varphi\in\cD(A).$

If $A_i$, $i\in I$, are linear operators on a Hilbert space $\cH_i,$ the \textit{direct sum} $\oplus_{i\in I} A_i$ denotes the operator on $\cH:=\oplus_{i\in I} \cH_i$ defined by $(\oplus_{i\in I} A_i)(\varphi_i)_{i\in I}:=(A_i\varphi_i)_{i\in I}$ for $(\varphi)_{i\in I}$ in\\

$\cD(\oplus_{i\in I}A_i) :=\{(\varphi_i)_{i\in I}: \varphi_i\in\cD(A_i),(\varphi_i)_{i\in I}\in \cH ~{\rm and}~(A_i\varphi_i)_{i\in I}\in \cH\}.$\\

\noindent \textbf{$*$-Algebras and $*$-representations.} By a $*$-\textit{algebra} we mean a complex associative algebra $\cA$ equipped with a mapping $a\mapsto a^*$ of $\cA$ into itself, called the \textit{involution} of $\cA$, such that $(\lambda a+\mu b)^* = \bar{\lambda}a^*+ \bar{\mu} b^*, (ab)^* = b^* a^*$ and $(a^*)^*=a$ for $a,b\in \cA$ and $\lambda, \mu\in \dC$. In this paper each $*$-algebra $\cA$ has an identity element denoted by $\Un_\cA$ or $\Un.$

An element of the form $\sum^n_{j=1} a_j^*a_j^{}$, where $ a_1,\dots,a_n\in\cA$ is called a \textit{sum of squares} in $\cA.$ The set of all sums of squares is denoted by $\sum\cA^2.$

We use some terminology and results from unbounded representation theory in Hilbert space (see e.g. in \cite{s4}). Let $\cD$ be a dense linear subspace of a Hilbert space $\cH$ with scalar product $\langle\cdot,\cdot\rangle.$ A $*$-\textit{representation} of a $*$-algebra $\cA$ on $\cD$ is an algebra homomorphism $\pi$ of $\cA$ into the algebra $\cL(\cD)$ of linear operators on $\cD$ such that $\pi(\Un)=I_\cD$ and $\langle\pi(a)\varphi,\psi\rangle=\langle\varphi,\pi(a^*)\psi\rangle$ for all $\varphi,\psi\in\cD$ and $a\in \cA.$ We call $\cD(\pi):=\cD$ the \textit{domain} of $\pi$ and write $\cH(\pi):=\cH.$ A $*$-representation is \emph{faithful} if $\pi(a)=0$ implies $a=0.$


We say that an element $a=a^*\in\cA$ is \emph{positive} in a $*$-representation $\pi$ if $\langle\pi(a)\varphi,\varphi\rangle\geq 0$ for all $\varphi\in\cD(\pi).$

Suppose that $\pi$ is a $*$-representation of $\cA$. The \textit{graph topology} of $\pi$ is the locally convex topology on the vector space $\cD(\pi)$ defined by the norms $\varphi\mapsto\norm{\varphi}+\norm{\pi(a)\varphi},$ where $a\in\cA$. Then $\pi$ is \textit{closed} if and only if $\cD(\pi)=\cap_{a\in\cA}\cD(\overline{\pi(a)}).$ We say that $\pi$ is \textit{strongly cyclic} if there exists a vector $\xi\in\cD(\pi)$ such that $\pi(\cA)\xi$ is dense in $\cD(\pi)$ in the graph topology of $\pi$. 

A linear functional $F:\cA\to\dC$ on a $*$-algebra $\cA$ is \emph{positive} if $F(\sum\cA^2)\geq 0.$ Every positive functional $F$ has a GNS representation (see e.g. \cite{s4}), that is, there exists a $*$-representation $\pi_F$ and with cyclic vector $\varphi$ such that $F(a)=\langle\pi_F(a)\varphi,\varphi\rangle$ for $a\in\cA.$


\section{$q$-Normal operators}\label{q-Normal operators}
In what follows $q$ is a positive real number. Recall the definition of a $q$-normal operator, see e.g. \cite{ota}.

\begin{defn}
A densely defined operator $X$ on a Hilbert space $\cH$ is a $q$-normal operator if $\cD(X)=\cD(X^*)$ and $\norm{X^*f}=\sqrt{q}\norm{Xf},\ f\in\cD(X).$
\end{defn}

A $q$-normal operator with $q=1$ is normal. Each $q$-normal operator $X$ is closed. Indeed, since $\norm{X^*f}=\sqrt{q}\norm{Xf},$ the graph norms of $X$ and $X^*$ are equivalent. Therefore, since $X^*$ is closed, $X$ is also closed. It also implies that $\ker X=\ker X^*.$ 

The following proposition collects different characterizing properties of $q$-normal operators, cf. Chapter 2 in \cite{osam}.
\begin{prop}\label{prop_char}Let $X$ be a closed operator on a Hilbert space $\cH$ and let $X=UC$ be its polar decomposition. The following statements are equivalent:
\begin{enumerate}
  \item[$(i)$]\quad $X$ is $q$-normal,
  \item[$(ii)$]\quad $XX^*=qX^*X,$
  \item[$(iii)$]\quad $UC^2U^*=qC^2,$
  \item[$(iv)$]\quad $UCU^*=q^{1/2}C,$
  \item[$(v)$]\quad $U E_C(\Delta)U^*=E_C(q^{-1/2}\Delta)$ for each Borel $\Delta\subseteq\dR_+,$
  \item[$(vi)$]\quad $U f(C)U^*=f(q^{1/2}C)$ for every Borel function $f$ on $\dC.$
\end{enumerate}
\end{prop}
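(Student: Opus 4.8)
\emph{Proof plan.} The plan is to run the whole statement through the polar decomposition $X=UC$, where $C:=|X|=(X^*X)^{1/2}$ is nonnegative self-adjoint and $U$ is the accompanying partial isometry, with $\ker U=\ker C=\ker X$, initial space $\overline{\Ran C}$, and final space $\overline{\Ran X}$, so that $U^*U=E_C((0,\infty))$ and $UU^*$ is the projection onto $\overline{\Ran X}$. I will use the standard identities $X^*=CU^*$, $X^*X=C^2$, $XX^*=UC^2U^*$, together with $\cD(X)=\cD(C)$, $\norm{Xf}=\norm{Cf}$, and the analogous $\cD(X^*)=\cD(|X^*|)$, $\norm{X^*g}=\norm{|X^*|g}$ for $|X^*|:=(XX^*)^{1/2}$. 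Since $X^*X=C^2$ and $XX^*=UC^2U^*$, condition $(ii)$ is merely a rewriting of $(iii)$, so it remains to prove $(i)\Leftrightarrow(ii)$, then $(iii)\Leftrightarrow(iv)$, then $(iv)\Leftrightarrow(v)\Leftrightarrow(vi)$.

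The equivalence $(i)\Leftrightarrow(ii)$ is the only place where genuine care with unbounded operators is needed, and I expect it to be the main obstacle. For $(ii)\Rightarrow(i)$: from $XX^*=qX^*X$, i.e. $|X^*|^2=(\sqrt q\,C)^2$, uniqueness of the nonnegative self-adjoint square root yields $|X^*|=\sqrt q\,C$, and combining this with the identifications above gives $\cD(X^*)=\cD(|X^*|)=\cD(C)=\cD(X)$ and $\norm{X^*f}=\norm{|X^*|f}=\sqrt q\,\norm{Cf}=\sqrt q\,\norm{Xf}$, which is $(i)$. For $(i)\Rightarrow(ii)$: the same identifications turn $(i)$, after polarization, into $\langle|X^*|f,|X^*|g\rangle=q\langle Cf,Cg\rangle$ on the common domain $\cD(C)=\cD(|X^*|)$; taking $f$ in the domain of $|X^*|^2$ and reading the right-hand side as a bounded linear functional of $g$ forces $Cf\in\cD(C)$ with $C^2f=q^{-1}|X^*|^2f$, and the symmetric inclusion gives $|X^*|^2=q\,C^2$, i.e. $(ii)$. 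I would also record here that $(i)$, equivalently $(ii)$, forces $\ker X=\ker X^*$ --- directly from $\norm{X^*f}=\sqrt q\,\norm{Xf}$, or from $\ker XX^*=\ker X^*$ and $\ker X^*X=\ker X$ --- so that $\overline{\Ran X}=\overline{\Ran X^*}=\overline{\Ran C}=:\mathcal M$, and hence $U$ restricts to a unitary of $\mathcal M$ onto itself, vanishes on $\mathcal M^\perp=\ker C$, and satisfies $UU^*=U^*U=E_C((0,\infty))=:P$.

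It remains to deal with $(iii)\Leftrightarrow(iv)$ and $(iv)\Leftrightarrow(v)\Leftrightarrow(vi)$, which are now soft. Since $PC=CP=C$ on $\cD(C)$, one has $(UCU^*)^2=UC\,(U^*U)\,CU^*=UC^2U^*$; thus $(iv)$ forces $qC^2=(\sqrt q\,C)^2=(UCU^*)^2=UC^2U^*$, which is $(iii)$, while under $(iii)$ the operator $UCU^*$ is nonnegative self-adjoint --- it is $C\!\upharpoonright\!\mathcal M$ conjugated by the unitary $U\!\upharpoonright\!\mathcal M$, extended by $0$ on $\mathcal M^\perp$ --- with $(UCU^*)^2=UC^2U^*=qC^2=(\sqrt q\,C)^2$, so uniqueness of the square root again gives $UCU^*=\sqrt q\,C$, i.e. $(iv)$. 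Finally, because $U\!\upharpoonright\!\mathcal M$ is unitary, $Uf(C)U^*=f(UCU^*)$ for every Borel $f$ (both sides vanishing off $\mathcal M$, which is exactly why the spectral sets in $(v)$ are taken in $\dR_+$ and why the identity $\ker X=\ker X^*$ is what makes this step clean); inserting $UCU^*=\sqrt q\,C$ and $f=\chi_\Delta$, $\chi_\Delta(\sqrt q\,C)=E_C(q^{-1/2}\Delta)$, converts $(iv)$ into $(v)$, general $f$ converts it into $(vi)$, and conversely $(v)$ reconstructs $\sqrt q\,C$ by integrating its spectral resolution (using that $(v)$ with $\Delta=(0,\infty)$ already forces $\overline{\Ran X}=\overline{\Ran X^*}$), while $(vi)\Rightarrow(v)$ is the special case $f=\chi_\Delta$.
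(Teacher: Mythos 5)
Your proposal is correct and rests on the same pillars as the paper's proof: the polar decomposition identities $X^*X=C^2$, $XX^*=UC^2U^*$, the kernel identity $\ker X=\ker X^*$ making $U$ unitary on $\mathcal M=\overline{\Ran C}$, uniqueness of the nonnegative square root for $(iii)\Rightarrow(iv)$, and the spectral calculus for $(iv)\Leftrightarrow(v)\Leftrightarrow(vi)$. The genuine differences are architectural and local. First, you organize the proof as a chain of biconditionals closed at $(v)\Rightarrow(iv)$, whereas the paper runs the cycle $(i)\Rightarrow(ii)\Rightarrow(iii)\Rightarrow(iv)\Rightarrow(v)\Rightarrow(i)$; in particular the paper never proves $(ii)\Rightarrow(i)$ directly, while you get it in one line from $|X^*|=\sqrt q\,C$ and $\cD(X^*)=\cD(|X^*|)$, $\norm{X^*f}=\norm{|X^*|f}$ --- arguably the cleanest route. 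Second, for $(i)\Rightarrow(ii)$ the paper invokes the representation theorem for closed quadratic forms ($\got_1=\got_2$ implies equality of the associated operators $XX^*$ and $qX^*X$), whereas you redo this by hand via polarization and the Riesz argument to get $|X^*|^2\subseteq qC^2$ and the reverse inclusion; both work, the form-theoretic version outsources the domain bookkeeping, yours makes it explicit. Third, your observation that $(ii)\Leftrightarrow(iii)$ is a pure rewriting via $XX^*=UC^2U^*$ is slicker than the paper's computation. One caveat you should make precise: your parenthetical claim that $Uf(C)U^*=f(UCU^*)$ with ``both sides vanishing off $\mathcal M$'' is not right for $f(0)\neq 0$, since $f(UCU^*)$ acts as $f(0)I$ on $\mathcal M^\perp=\ker C$ while $Uf(C)U^*$ vanishes there; so $(v)$ for $\Delta\ni 0$ and $(vi)$ for $f(0)\neq 0$ literally fail when $\ker X\neq\set{0}$. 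This is a defect of the proposition as stated (the paper's own proof silently restricts to $(\ker C)^\perp$, ``we can assume $U$ is unitary''), so it is not a gap peculiar to your argument, but since you carry the kernel explicitly you should either assume $0\notin\Delta$, $f(0)=0$, or state the identities on $\mathcal M$.
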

\begin{proof}


\noindent $(i)\Rightarrow(ii):$ We use some basic properties of quadratic forms associated with positive operators, see e.g. \cite{rs}. Introduce the quadratic forms $\got_1[\varphi,\varphi]:=\langle X^*\varphi,X^*\varphi\rangle$ and $\got_2[\varphi,\varphi]:=q\langle X\varphi,X\varphi\rangle,\ \cD[\got_1]=\cD(X^*),\ \cD[\got_2]=\cD(X).$ Let $X$ be $q$-normal. Then $X$ is closed and
$$
\got_1[\varphi,\varphi]=\norm{X^*\varphi}^2=q\norm{X\varphi}^2=\got_2[\varphi,\varphi].
$$
Thus, we get $\got_1=\got_2.$ Since $X$ and $X^*$ are closed, $\got_1$ and $\got_2$ are closed. The operators associated with $\got_1$ and $\got_2$ are $XX^*$ and $qX^*X$ respectively. Hence $XX^*=qX^*X.$

\medskip
\noindent $(ii)\Rightarrow(iii):$ Since $X=UC$ is a polar decomposition, we have $X^*=CU^*$ and $\ker U=\ker C.$ Then equation $(ii)$ implies $UC^2U^*=qCU^*UC=qC^2.$

\medskip
\noindent $(iii)\Rightarrow(iv):$ Equation $(iii)$ implies that for $\varphi\in\cD(C^2)$ holds
\begin{gather*}
q\norm{C\varphi}^2=q\langle C^2\varphi,\varphi\rangle=\langle UC^2U^*\varphi,\varphi\rangle=\langle CU^*\varphi,CU^*\varphi\rangle=\norm{CU^*\varphi}^2.
\end{gather*}

It implies $\ker U^*\subseteq\ker C.$ On the other hand, if $\varphi\in\ker C,$ then by the last equation $U^*\varphi\in\ker C=\ker U.$ That is $UU^*\varphi=0,$ which implies $U^*\varphi=0.$ Hence $\ker U^*=\ker C=\ker U.$ Restricting $U,U^*,C$ onto $(\ker C)^\perp$ we can assume that $U$ is unitary. Then relation $(iii)$ defines a unitary equivalence of $C^2$ and $qC^2.$ Hence, the square roots $C$ and $q^{1/2}C$ are unitary equivalent and we get $(iv).$

\medskip
\noindent $(iv)\Rightarrow(v):$ As in the previous case we can assume that $U$ is unitary. Then $C$ and $q^{1/2}C$ are unitary equivalent and for every Borel $\Delta\subseteq\dR$ we get
\begin{gather*}
U E_C(\Delta)U^*=E_{q^{1/2}C}(\Delta)=E_C(q^{-1/2}\Delta).
\end{gather*}

\medskip
\noindent $(v)\Rightarrow(i):$ Note that $(v)$ implies that $U$ and $U^*$ commute with $E_C(\set{0}),$ that is $\ker C$ is invariant under $U$ and $U^*.$ Considering the restriction of $X$ (resp. $U$ and $C$) onto $(\ker C)^\perp$ we can assume that $U$ is unitary. Then $(v)$ means that $C$ and $q^{1/2}C$ are unitarily equivalent, namely $UCU^*=q^{1/2}C,$ which implies $CU^*=q^{1/2}U^*C.$ Using the latter we get $\cD(X)=\cD(UC)=\cD(U^*C)=\cD(CU^*)=\cD(X^*)$ and
$$
\langle CU^*\varphi,CU^*\varphi\rangle=q\langle U^*C\varphi,U^*C\varphi\rangle=q\langle UC\varphi,UC\varphi\rangle,\ \varphi\in\cD(X)
$$
which implies $\norm{X^*\varphi}=q^{1/2}\norm{X\varphi}.$

\medskip
\noindent $(v)\Rightarrow(vi):$ Follows from the computation
\begin{gather*}
Uf(C)U^*=U\left(\int f(\lambda)d E_C(\lambda)\right) U^*=\int f(\lambda)d E_C(q^{-1/2}\lambda)=\int f(\lambda)d E_{q^{1/2}C}(\lambda)=f(q^{1/2}C).
\end{gather*}

\medskip
\noindent $(vi)\Rightarrow(v):$ Follows by setting $f(\lambda)=\One_{\Delta}(\lambda).$
\end{proof}


\bigskip
We provide a basic example of a $q$-normal operator for $q\neq 1.$ Put
\begin{gather}\label{eq_Delta_q}
\Delta_q=\left\{
           \begin{array}{ll}
             [1,q^{1/2}), & \hbox{if}\ q>1, \\
             (q^{1/2},1], & \hbox{if}\ q<1.
           \end{array}
         \right.
\end{gather}

\noindent Let $\mu$ be a Borel measure on $\dR_+=[0,+\infty)$ such that
\begin{gather}\label{eq_muD}
\mu(\Delta)=\mu(q^{1/2}\Delta)\ \mbox{for all Borel subsets}\ \Delta\subseteq\dR_+.
\end{gather}
Since $(0,+\infty)=\cup_{k\in\dZ}q^k\Delta_q,$ the measure $\mu$ is uniquely defined by its restriction onto the subspace $\Delta_q\cup\set{0}\subset\dR_+.$ Define an operator $X_\mu,$ on the Hilbert space $\cH_\mu:=L_2(\dR_+,d\mu)$ as follows.
\begin{gather}\label{eq_Amu}
(X_\mu\varphi)(t):=q^{1/2}t\varphi(q^{1/2}t),\ \cD(X_\mu)=\set{\varphi(t)\in \cH_\mu|\ t\varphi(t)\in \cH_\mu}.
\end{gather}
Let $\cH_{\mu,0}=\set{f\in\cH_\mu|\ f\equiv 0,\ \mbox{a.e. on}\ (0,+\infty)}.$ Then $\mu(\set{0})=0$ if and only if $\cH_{\mu,0}=\nolinebreak\set{0}.$ We prove the following
\begin{prop}\label{prop_Amu}$ $
\begin{enumerate}
	\item[$(i)$] The operator $X_\mu$ in (\ref{eq_Amu}) is a well-defined $q$-normal operator with $\ker X_\mu=\cH_{\mu,0}.$
	\item[$(ii)$] The adjoint operator $X_\mu^*$ is defined by $$(X_\mu^*\varphi)(t)=t\varphi(q^{-1/2}t),\ \cD(X_\mu^*)=\cD(X_\mu).$$
    \item[$(iii)$] Let $X_\mu=U_\mu C_\mu$ be the polar decomposition of $X_\mu.$ Then
    \begin{gather}\label{eq_Umu_Cmu}
(C_\mu\psi)(t):=t\psi(t),\ (U_\mu\varphi)(t):=\left\{
                     \begin{array}{ll}
                       \varphi(q^{1/2}t), & \hbox{for}\ t\neq 0, \\
                       0, & \hbox{for}\ t=0.
                     \end{array}
                   \right.
\end{gather}
where $\varphi\in\cH, \psi\in\cD(C_\mu)=\cD(X_\mu).$
\end{enumerate}
\end{prop}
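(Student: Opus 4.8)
The plan is to isolate the one substantive ingredient --- that the dilation $T\colon t\mapsto q^{1/2}t$ is a $\mu$-measure-preserving Borel automorphism of $\dR_+$ --- and then deduce (i)--(iii) from the uniqueness of the polar decomposition together with Proposition \ref{prop_char}. First I would record that applying (\ref{eq_muD}) to $\Delta=q^{-1/2}\Delta'$ gives $\mu(T^{-1}\Delta')=\mu(q^{-1/2}\Delta')=\mu(\Delta')$ for every Borel $\Delta'\subseteq\dR_+$, so $T$ and $T^{-1}$ preserve $\mu$ and the substitution rule $\int g(q^{\pm1/2}t)\,d\mu(t)=\int g(s)\,d\mu(s)$ holds for all nonnegative Borel $g$. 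Taking $g(s)=s^2|\varphi(s)|^2$ then gives $\norm{X_\mu\varphi}^2=\int(q^{1/2}t)^2|\varphi(q^{1/2}t)|^2\,d\mu(t)=\int s^2|\varphi(s)|^2\,d\mu(s)$, so $X_\mu\varphi$ lies in $\cH_\mu$ exactly when $t\varphi(t)$ does; hence $X_\mu$ is well defined on $\cD(X_\mu)$, and $\norm{X_\mu\varphi}=\norm{C_\mu\varphi}$ for all $\varphi\in\cD(X_\mu)=\cD(C_\mu)$, where $C_\mu$ denotes multiplication by $t$. As $C_\mu$ is self-adjoint, $\cD(X_\mu)$ is dense.

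Next I would prove (iii). The multiplication operator $C_\mu$ is self-adjoint and nonnegative with $\ker C_\mu=\cH_{\mu,0}$ and $\overline{\Ran C_\mu}=\cH_{\mu,0}^\perp=L_2((0,\infty),d\mu)$. Using that $T$ restricts to a $\mu$-measure-preserving bijection of $(0,\infty)$, I would check that $U_\mu$ maps $\cH_{\mu,0}^\perp$ isometrically onto itself and annihilates $\cH_{\mu,0}$, so that $U_\mu$ is a partial isometry with $\ker U_\mu=\cH_{\mu,0}=\ker C_\mu$ and initial (hence also final) space $\cH_{\mu,0}^\perp=\overline{\Ran C_\mu}$. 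A direct substitution gives $(U_\mu C_\mu\varphi)(t)=q^{1/2}t\varphi(q^{1/2}t)$ with $\cD(U_\mu C_\mu)=\cD(C_\mu)=\cD(X_\mu)$, so $X_\mu=U_\mu C_\mu$; by the uniqueness of the polar decomposition this is \emph{the} polar decomposition of $X_\mu$ (in particular $X_\mu$ is closed), which is (iii). The same substitution yields $(U_\mu^*\varphi)(t)=\varphi(q^{-1/2}t)$ for $t\neq0$ and $0$ for $t=0$.

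For (ii), since $U_\mu$ is bounded and everywhere defined and $C_\mu=C_\mu^*$, I would use $X_\mu^*=(U_\mu C_\mu)^*=C_\mu U_\mu^*$, which gives $(X_\mu^*\varphi)(t)=t\varphi(q^{-1/2}t)$; moreover $\norm{X_\mu^*\varphi}^2=\int t^2|\varphi(q^{-1/2}t)|^2\,d\mu(t)=q\int t^2|\varphi(t)|^2\,d\mu(t)$ by the substitution rule, whence $\cD(X_\mu^*)=\cD(C_\mu)=\cD(X_\mu)$. Part (i) is then immediate: $\cD(X_\mu)=\cD(X_\mu^*)$ and $\norm{X_\mu^*\varphi}^2=q\norm{C_\mu\varphi}^2=q\norm{X_\mu\varphi}^2$, so $X_\mu$ is $q$-normal (alternatively, $(U_\mu C_\mu U_\mu^*\varphi)(t)=q^{1/2}t\varphi(t)=q^{1/2}(C_\mu\varphi)(t)$, which is condition $(iv)$ of Proposition \ref{prop_char}); and $X_\mu\varphi=0$ forces $C_\mu\varphi\in\ker U_\mu\cap\overline{\Ran C_\mu}=\{0\}$, so $\ker X_\mu=\ker C_\mu=\cH_{\mu,0}$.

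I expect the only real obstacle to be the bookkeeping of domains --- verifying that $\cD(X_\mu)$, $\cD(C_\mu)$, $\cD(U_\mu C_\mu)$, $\cD(C_\mu U_\mu^*)$ and $\cD(X_\mu^*)$ all coincide --- and the care needed at the point $0$ when $\mu(\{0\})>0$, where $U_\mu$ genuinely has a nontrivial kernel; the analytic content reduces entirely to the dilation-invariance of $\mu$ noted in the first paragraph.
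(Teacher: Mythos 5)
Your proposal is correct and follows essentially the same route as the paper: identify $C_\mu$ as the multiplication operator and $U_\mu$ as a partial isometry (both consequences of the dilation-invariance (\ref{eq_muD})), invoke the characterization of the polar decomposition, obtain $X_\mu^*=C_\mu U_\mu^*$ from the boundedness of $U_\mu$, and deduce $q$-normality from the norm identities $\norm{X_\mu\varphi}=\norm{t\varphi(t)}$ and $\norm{X_\mu^*\varphi}=q^{1/2}\norm{t\varphi(t)}$. You merely supply more detail than the paper on the domain bookkeeping, the initial/final spaces of $U_\mu$, and the kernel identification, all of which the paper leaves implicit.
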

\begin{proof}
It follows from (\ref{eq_muD}) that $U_\mu$ is a well-defined partial isometry. Further, we have $X_\mu=U_\mu C_\mu$ and $\ker C_\mu=\ker U_\mu=\cH_{\mu,0}.$ Since $C_\mu$ is a positive self-adjoint operator, $X_\mu=U_\mu C_\mu$ is a polar decomposition of $X_\mu.$ Since $U_\mu$ is bounded, we have $X_\mu^*=C_\mu U_\mu^*.$ Together with (\ref{eq_muD}) it implies $\cD(X_\mu^*)=\cD(C_\mu)=\cD(X_\mu).$ For $\varphi\in\cD(X_\mu)$ we calculate using (\ref{eq_muD}):
$$
\norm{X_\mu\varphi}=\norm{t\varphi(t)}\ \mbox{and}\ \norm{X_\mu^*\varphi}=q^{1/2}\norm{t\varphi(t)}.
$$
Thus, $X_\mu$ is $q$-normal.
\end{proof}


The following theorem can be viewed as an analogue of the spectral theorem for normal operators (see e.g. Theorem VII.3 in \cite{rs}).
\begin{thm}\label{thm_spec}
Let $X$ be a $q$-normal operator on a Hilbert space $\cH.$ Then there exists a family of Borel measures $\mu_i,\ i\in I$ on $\dR_+$ satisfying $\mu_i(x)=\mu_i(q^{1/2}x)$ such that $X$ is unitarily equivalent to the direct sum of operators $X_{\mu_i}$ defined by (\ref{eq_Amu}).
\end{thm}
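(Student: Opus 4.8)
The plan is to reduce the $q$-normal operator $X$ to a direct sum of cyclic pieces via the spectral theory of its modulus and then identify each piece with an $X_\mu$. Write the polar decomposition $X=UC$ with $C=(X^*X)^{1/2}$; by Proposition \ref{prop_char} the hypothesis of $q$-normality is equivalent to $UCU^*=q^{1/2}C$, equivalently $U E_C(\Delta) U^* = E_C(q^{-1/2}\Delta)$ for Borel $\Delta\subseteq\dR_+$, and $U$ is a partial isometry with $\ker U=\ker C$. After splitting off the kernel part $E_C(\{0\})\cH$ (on which $X=0$, and which is accounted for by measures $\mu_i$ with $\mu_i(\{0\})>0$), I may assume $\ker C=\{0\}$, so $U$ is unitary and intertwines $C$ with $q^{1/2}C$.

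First I would set up a spectral multiplicity decomposition of the self-adjoint operator $C$ that is \emph{compatible} with the scaling action of $U$. The key observation is that the map $\Delta\mapsto q^{-1/2}\Delta$ generates a free $\dZ$-action on $\dR_+\setminus\{0\}$ with fundamental domain $\Delta_q$ (from \eqref{eq_Delta_q}), since $(0,\infty)=\bigsqcup_{k\in\dZ} q^{k/2}\Delta_q$ once we use $q^{1/2}$ as the dilation step. So I would first restrict attention to the reducing subspace $\cH_0:=E_C(\Delta_q)\cH$ and the positive (bounded, since $\Delta_q$ is bounded away from $0$ and $\infty$) self-adjoint operator $C_0:=C\upharpoonright\cH_0$. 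Applying the ordinary spectral multiplicity theorem to $C_0$ gives a measure space decomposition: there is a family of finite Borel measures $\nu_i$ on $\Delta_q$ and a unitary $W_0:\cH_0\to\bigoplus_i L_2(\Delta_q,d\nu_i)$ with $W_0 C_0 W_0^*$ acting as multiplication by $t$ on each summand. (Alternatively one can run a direct-integral/disintegration argument; either way the point is a $C$-diagonalizing unitary.) Then I extend each $\nu_i$ to a Borel measure $\mu_i$ on $\dR_+$ by declaring $\mu_i(q^{k/2}\Delta):=\nu_i(\Delta)$ for $\Delta\subseteq\Delta_q$, which is exactly the scaling-invariance condition $\mu_i(\Delta)=\mu_i(q^{1/2}\Delta)$ and determines $\mu_i$ uniquely on $\dR_+\setminus\{0\}$; set $\mu_i(\{0\})=0$ here.

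Next I would use the unitary $U$ to transport $\cH_0$ around and reassemble the full space. Because $U^* E_C(\Delta) U = E_C(q^{1/2}\Delta)$, the subspaces $U^k\cH_0 = E_C(q^{k/2}\Delta_q)\cH$ for $k\in\dZ$ are mutually orthogonal and $\bigoplus_{k\in\dZ} U^k\cH_0=\cH$. On $\bigoplus_k U^k\cH_0$ define a candidate unitary onto $\bigoplus_i L_2(\dR_+,d\mu_i)$ by sending $U^k\xi$ (for $\xi\in\cH_0$) to the function $q^{k/2}t\mapsto (W_0\xi)(t)$, i.e.\ by spreading the $\Delta_q$-supported data to the $k$-th annulus using the same rescaling that defines $\mu_i$; concretely a vector $\eta=\sum_k U^k\xi_k$ goes to the $\bigoplus_i L_2(\dR_+,d\mu_i)$-element whose restriction to $q^{k/2}\Delta_q$ is $(W_0\xi_k)$ pulled back along $t\mapsto q^{-k/2}t$. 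The scaling-invariance of $\mu_i$ makes this an isometry onto. One then checks on this decomposition that $C$ becomes multiplication by $t$ (this is immediate on each annulus from the $C_0$-diagonalization and the intertwining $UCU^*=q^{1/2}C$) and that $U$ becomes the shift $\varphi(t)\mapsto\varphi(q^{1/2}t)$ (essentially by construction, since $U$ sends the $k$-th annulus to the $(k+1)$-st). Comparing with Proposition \ref{prop_Amu}(iii), the transported $X=UC$ is precisely $\bigoplus_i X_{\mu_i}$, and restoring the kernel part handled at the start completes the proof.

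The main obstacle is the bookkeeping in the second-to-last step: one must choose the $C$-diagonalizing unitary on the single fundamental annulus $\cH_0=E_C(\Delta_q)\cH$ in a way that, after propagating by powers of $U$, \emph{globally} diagonalizes $C$ on all of $\cH$ and simultaneously turns $U$ into the clean dilation $\varphi(t)\mapsto\varphi(q^{1/2}t)$ — in particular one must verify there is no residual "cocycle" obstruction linking the $k$ and $k{+}1$ annuli (there is not, because $U$ itself is used as the identification, so the gluing maps are trivial). A secondary point requiring care is the edge case where $\ker C\neq\{0\}$: there $U$ is only a partial isometry, $\ker C$ is $U$- and $U^*$-invariant by $(v)$, and the restriction of $X$ to $\ker C$ is the zero operator, which is unitarily equivalent to a direct sum of copies of $X_{\delta_0}$ (Proposition \ref{prop_Amu}(i) with $\mu$ supported at $0$), so it fits the stated form; this should be disposed of cleanly at the outset before assuming $U$ unitary.
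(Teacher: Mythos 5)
Your proposal is correct and follows essentially the same route as the paper: split off $\ker C$, restrict $C$ to the fundamental domain $\Ran E_C(\Delta_q)$, decompose that restriction into cyclic (multiplication) pieces, propagate the decomposition to the annuli $\Ran E_C(q^{k/2}\Delta_q)$ by powers of the unitary $U$ via the intertwining $UE_C(\Delta)U^*=E_C(q^{-1/2}\Delta)$, and reassemble each fibre as $X_{\mu_i}$ with the scaling-invariant extension of the cyclic measure. The paper's use of Zorn's lemma to pick cyclic subspaces $\cK_i$ and vectors $\psi_i$ is exactly the spectral multiplicity decomposition you invoke, and your "no cocycle obstruction" remark corresponds to the paper's explicit unitaries $V_{i,k}=W_kV_iU^k$ (note only that $U^k\cH_0=\Ran E_C(q^{-k/2}\Delta_q)$, not $q^{k/2}$, a harmless relabeling of $k$).
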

\begin{proof}
Let $\cH_0\subseteq \cH$ denote $\ker X=\ker X^*.$ The restriction $X\upharpoonright\cH_0$ is a direct sum of copies of $X_{\mu_0}$ where $\mu_0(\dR_+)=\mu(\set{0})=1.$ The restriction $X_1=X\upharpoonright\cH_0^\perp$ is again a $q$-normal operator with $X_1^*=X^*\upharpoonright\cH_0^\perp.$ Without loss of generality, we can assume that $\ker X=\ker X^*=\set{0}.$ Let $X=UC$ be the polar decomposition of $X.$ Since $\ker X=\set{0},\ U$ is unitary.

Let $\cK=\Ran E_C(\Delta_q).$ Then $\cK$ is invariant under $C$ and we denote by $D$ the restriction $C\upharpoonright\cK.$ Further, let $\cK=\oplus_{i\in I}\cK_i$ an arbitrary orthogonal sum decomposition such that $\cK_i$ is invariant under $D$ and let $D_i=D\upharpoonright\cK_i,\ i\in I.$ We show that there is a corresponding orthogonal sum decomposition of $X=\oplus_{i\in I}X_i.$

It follows from Proposition \ref{prop_char},$(v)$ that
\begin{gather}\label{eq_U_d_q}
U^k\left(\Ran E_C(\Delta_q)\right)=\Ran E_C(q^{-k/2}\Delta_q),\ k\in\dZ.
\end{gather}

Since $(0,+\infty)$ is a disjoint union of $q^{k/2}\Delta_q,\ k\in\dZ$, we get the following direct sum decomposition
\begin{gather*}
\cH=\Ran E_C((0,+\infty))=\bigoplus_{k\in\dZ}\Ran E_C(q^{k/2}\Delta_q)=\\
=\bigoplus_{k\in\dZ}U^{*k}\Ran E_C(\Delta_q)=\bigoplus_{k\in\dZ}\bigoplus_{i\in I}U^{*k}\cK_i=\bigoplus_{i\in I}\bigoplus_{k\in\dZ}\cK_{i,k}=\bigoplus_{i\in I}\cH_i,
\end{gather*}
where $\cK_{i,k}=U^{*k}\cK_i$ and $\cH_i=\bigoplus_{k\in\dZ}\cK_{i,k}.$ For $i\in I, k\in\dZ$ we define the operators
\begin{gather}\label{eq_D_ik}
D_{i,k}=U^{*k}(q^{k/2}D_i)U^k:\cK_{i,k}\to \cK_{i,k}.
\end{gather}
Then for each $\varphi\in\cK_{i,k}$ holds $U^k\varphi\in\cK_i$ and we calculate using Proposition \ref{prop_char}, $(iv)$
$$
D_{i,k}\varphi=q^{k/2}U^{*k}D_i(U^k\varphi)=q^{k/2}U^{*k}C(U^k\varphi)=CU^{*k}(U^k\varphi)=C\varphi.
$$
It implies that $\cK_{i,k}$ is invariant under $C$ and $C\upharpoonright\cK_{i,k}=D_{i,k}$ for all $i\in I,\ k\in\dZ.$ Put $C_i=\bigoplus_{k\in\dZ}D_{i,k},\ i\in I.$ Then $C_i,\ i\in I$ are self-adjoint with $\Ran C_i\subseteq\cH_i$ and $\bigoplus_{i\in I}C_i=C$ in particular, $C_i=C\upharpoonright\cH_i.$ The subspaces $\cH_i,\ i\in I$ are invariant under $U$ by definition of $\cH_i.$ We denote by $U_i$ the restriction of $U$ onto $\cH_i,$ so that $U=\oplus_{i\in I}U_i.$ Thus we get $X=UC=\oplus_{i\in I}U_iC_i.$

Using Zorn's Lemma we can choose $D_i$ to be cyclic with cyclic vectors $\psi_i\in\cK_i.$ Since $\sigma(D_i)\subseteq\Delta_q,$ there exist unitary operators $V_i:\cK_i\to L_2(\Delta_q,d\mu_i)$ such that
\begin{gather}\label{eq_Vi_Di}
(V_i^{}D_i^{}V_i^*f)(t)=tf(t),\ f(t)\in L_2(\Delta_q,d\mu_i),
\end{gather}
where $\mu_i(\cdot)=\langle E_{D_i}(\cdot)\psi_i,\psi_i\rangle,$ see e.g. Chapter VII in \cite{rs}.

It follows from (\ref{eq_D_ik}) that operators $D_{i,k}$ are cyclic on $\cK_{i,k},$ with cyclic vectors $\psi_{i,k}:=U^{*k}\psi_i,\ i\in I,\ k\in\dZ.$ By (\ref{eq_D_ik}) we also have $\sigma(D_{i,k})\subseteq q^{k/2}\Delta_q.$ We calculate the corresponding measures $\mu_{i,k}(\cdot)=\langle E_{D_{i,k}}(\cdot)\psi_{i,k},\psi_{i,k}\rangle$ using the unitary equivalence (\ref{eq_D_ik}):
\begin{gather}
\nonumber\mu_{i,k}(q^{k/2}\Delta)=\langle E_{D_{i,k}}(q^{k/2}\Delta)U^{*k}\psi_{i},U^{*k}\psi_{i}\rangle=\langle U^{k}E_{D_{i,k}}(q^{k/2}\Delta)U^{*k}\psi_i,\psi_i\rangle=\\
\label{eq_mu_ik}=\langle E_{q^{k/2}D_i}(q^{k/2}\Delta)\psi_i,\psi_i\rangle=\langle E_{D_i}(\Delta)\psi_i,\psi_i\rangle=\mu_i(\Delta),
\end{gather}
for each Borel set $\Delta\subseteq\Delta_q.$ 
For every $k\in\dZ$ we define an operator:
$$
W_k:L_2(\Delta_q,d\mu_i)\to L_2(q^{k/2}\Delta_q,d\mu_{i,k}),\ (Wf)(t)=f(q^{-k/2}t).
$$
It follows from (\ref{eq_mu_ik}) that $W_k$ are unitary. Further, for each $i\in I,\ k\in\dZ$ we define unitary operators $V_{i,k}=W_kV_iU^k,\ V_{i,k}:\cK_{i,k}\to L_2(q^{k/2}\Delta_q,d\mu_{i,k}).$ Equations (\ref{eq_D_ik}),(\ref{eq_Vi_Di}) imply that
$$(V_{i,k}^{}D_{i,k}^{}V_{i,k}^*f)(t)=tf(t)\ \mbox{for}\ f\in L_2(q^{k/2}\Delta_q,d\mu_{i,k}).$$
Since $\supp\mu_{i,k}\subseteq q^{k/2}\Delta_q$ are disjoint for different $k\in\dZ,$ we can define a Borel measure $\muw_i:=\sum_{k\in\dZ}\mu_{i,k}$ on $\dR_+$ which satisfies (\ref{eq_muD}). Then $\widetilde{V_i}=\oplus_{k\in\dZ}V_{i,k}$ is a unitary operator from $\cH_i$ to $L_2(\dR_+,\muw_i)$ such that $\widetilde{V_i}C_i\widetilde{V_i}^*=C_{\muw_i},$ where $C_{\muw_i},\ i\in I$ are defined by (\ref{eq_Umu_Cmu}). Using (\ref{eq_D_ik}) and (\ref{eq_Vi_Di}) we get $\widetilde{V_i}U_i\widetilde{V_i}^*=U_{\muw_i},$ where $U_{\muw_i},\ i\in I$ are defined by (\ref{eq_Umu_Cmu}). Hence, $X=\oplus_{i\in I}X_i,$ where every $X_i=U_iC_i$ is unitary equivalent to $X_{\muw_i},\ i\in I.$
\end{proof}

\begin{defn}
We say that a $q$-normal operator $X$ is \emph{reducible} if $\cD(X)=\cD_1\oplus \cD_2,\ \cD_i\neq 0$ and $\cD_1,\cD_2$ are invariant under $X.$ Otherwise we say that $X$ is \emph{irreducible}.
\end{defn}

For irreducible $q$-normal operators we obtain the following description, see also \cite{osam}, p.71.
\begin{prop}\label{prop_des_irr}
Let $X$ be a non-zero irreducible $q$-normal operator on a Hilbert space $\cH.$ Then there exists a unique $\lambda\in\Delta_q,$ and unique orthonormal base $\set{e_k}_{k\in\dZ}$ in $\cH$ such that
\begin{gather}\label{eq_irr_qnorm}
X e_k=\lambda q^{-k/2}e_{k+1},\ X^*e_k=\lambda q^{-(k-1)/2}e_{k-1},\ k\in\dZ.
\end{gather}
\end{prop}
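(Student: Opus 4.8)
The plan is to reduce the problem to the structure theorem (Theorem~\ref{thm_spec}) and the description of $X_\mu$ from Proposition~\ref{prop_Amu}, and then identify which of the building blocks $X_{\mu}$ corresponds to an \emph{irreducible} operator. Since $X$ is non-zero and irreducible, first I would observe that $X$ cannot have a kernel: $\ker X=\ker X^*$ is invariant under $X$, so if it were nontrivial and a proper subspace it would contradict irreducibility, and if $\ker X=\cH$ then $X=0$. Hence $\ker X=\set{0}$ and, writing the polar decomposition $X=UC$, the partial isometry $U$ is unitary. By Theorem~\ref{thm_spec}, $X$ is unitarily equivalent to a direct sum $\oplus_{i\in I}X_{\muw_i}$; irreducibility forces $|I|=1$, and moreover in the construction in the proof of Theorem~\ref{thm_spec} the cyclic subspace $\cK=\Ran E_C(\Delta_q)$ must itself be irreducible under $D=C\upharpoonright\cK$. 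An abelian self-adjoint operator is irreducible precisely when its spectral measure is a point mass, so $C\upharpoonright\cK$ is multiplication by a single scalar $\lambda$, and by construction $\lambda\in\Delta_q$; this $\lambda$ is unique because the sets $q^{k/2}\Delta_q$, $k\in\dZ$, are disjoint, so $\lambda$ is the unique point of $\sigma(C)\cap\Delta_q$.

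Next I would build the orthonormal basis. With $\cK=\dC\psi$ a one-dimensional eigenspace $C\psi=\lambda\psi$, $\|\psi\|=1$, set $e_k:=q^{k/2}\lambda^{-1}U^{*k}C^{k}\psi$ — but more cleanly, since on $\cK_{i,k}=U^{*k}\cK$ the operator $C$ acts (by the computation $D_{i,k}\varphi=C\varphi$ in the proof of Theorem~\ref{thm_spec}, together with $UCU^*=q^{1/2}C$ from Proposition~\ref{prop_char}(iv)) as multiplication by $q^{k/2}\lambda$ on the one-dimensional space $\dC\,U^{*k}\psi$. So define $e_k:=U^{*k}\psi$ for $k\in\dZ$. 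These are unit vectors, mutually orthogonal because they lie in the pairwise-orthogonal spectral subspaces $\Ran E_C(q^{k/2}\Delta_q)$, and they span $\cH=\oplus_{k\in\dZ}\Ran E_C(q^{k/2}\Delta_q)$ by the decomposition in the proof of Theorem~\ref{thm_spec}. Then $X e_k = UC e_k = U(q^{k/2}\lambda\, e_k) = q^{k/2}\lambda\, U U^{*k}\psi = q^{k/2}\lambda\, U^{*(k-1)}\psi = q^{k/2}\lambda\, e_{k-1}$. This is off by a sign in the index from the claimed formula $X e_k=\lambda q^{-k/2}e_{k+1}$, so I would instead set $e_k:=U^{k}\psi$ (equivalently relabel $k\mapsto -k$); then $C e_k = q^{-k/2}\lambda\, e_k$, hence $X e_k = UC e_k = q^{-k/2}\lambda\, U^{k+1}\psi = \lambda q^{-k/2} e_{k+1}$, and similarly $X^*e_k = CU^* e_k = C\, e_{k-1} = \lambda q^{-(k-1)/2} e_{k-1}$, matching \eqref{eq_irr_qnorm} exactly.

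Finally I would address uniqueness of the basis. Given any orthonormal basis $\set{f_k}$ satisfying \eqref{eq_irr_qnorm} with some $\lambda'\in\Delta_q$, the relations force $C f_k = X^* X / \text{(norm)}$-type identities giving $C f_k = \lambda' q^{-k/2} f_k$; thus $\sigma(C)=\set{\lambda' q^{-k/2}:k\in\dZ}$, and since $\sigma(C)\cap\Delta_q$ is a single point equal to $\lambda$ (independent of the choice of basis), we get $\lambda'=\lambda$. Uniqueness of the $e_k$ themselves follows because each $e_k$ spans the eigenspace $\ker(C-\lambda q^{-k/2})$, which is one-dimensional by irreducibility, so $e_k$ is determined up to a unimodular scalar; the relation $X e_k = \lambda q^{-k/2} e_{k+1}$ then propagates a single phase choice along the whole chain, and fixing it (say at $k=0$) pins down all $e_k$. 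I would remark that strictly speaking the basis is unique up to one overall unimodular constant, or phrase \eqref{eq_irr_qnorm} as determining $\set{e_k}$ once $e_0$ is fixed.

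The main obstacle, and the step requiring the most care, is establishing that irreducibility of $X$ implies $C\upharpoonright\cK$ has one-point spectrum — i.e., that a genuinely reducing subspace for $X$ can be manufactured from any nontrivial spectral projection $E_C(\Delta)$ with $\Delta\subseteq\Delta_q$. The point is that such a projection, conjugated by powers of $U$, yields via \eqref{eq_U_d_q} an $X$-invariant \emph{and} $X^*$-invariant subspace $\oplus_k U^{*k}\Ran E_C(\Delta)$, contradicting irreducibility unless $\Delta$ is trivial for $\mu$. One must be slightly careful that ``reducible'' in the paper's sense (a splitting of the \emph{domain} $\cD(X)$ into invariant pieces) is implied here; since the subspace produced is a reducing subspace in the usual operator sense and $X$ is $q$-normal (so its graph norm is equivalent to that of $X^*$), the induced domain splitting works, and this is the technical point I would verify explicitly.
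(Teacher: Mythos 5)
Your proposal is correct and follows essentially the same route as the paper: reduce to Theorem \ref{thm_spec}, use irreducibility to force the block $D=C\upharpoonright\Ran E_C(\Delta_q)$ to be one-dimensional with eigenvalue $\lambda\in\Delta_q$, and generate the basis as the $U$-orbit of the eigenvector, verifying \eqref{eq_irr_qnorm} via $UCU^*=q^{1/2}C$. The only substantive difference is that you work with the abstract polar decomposition rather than the concrete $L_2(\dR_+,d\mu)$ model, and you supply a uniqueness discussion (correctly flagging the overall unimodular phase ambiguity) that the paper's proof leaves implicit.
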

\begin{proof}
Since $X,\ X\neq 0$ is irreducible, by Theorem \ref{thm_spec} we get $X=X_\mu$ for some Borel measure $\mu$ on $(0,+\infty)$ satisfying (\ref{eq_muD}). It follows from the proof of Theorem \ref{thm_spec} that operator $D=C_\mu E_{C_\mu}(\Delta_q)$ is irreducible, i.e. one-dimensional. In particular $\supp\mu\cap\Delta_q$ consists of a singular point $\lambda\in\Delta_q.$ Put
$$
e_k=\One_{\set{\lambda q^{-k/2}}}(\mu(\set{\lambda q^{-k/2}}))^{-1},\ k\in\dZ.
$$

Direct computations show that (\ref{eq_irr_qnorm}) is satisfied. Further, one can check that every bounded operator which commutes with $X$ and $X^*$ is a multiple of identity. Hence $X$ is irreducible.
\end{proof}

Below we will often use the following
\begin{lemma}\label{lemma_dense}
Let $q>0,$ $X$ be a $q$-normal operator on a Hilbert space $\cH$ and let $X=UC$ be its polar decomposition.
\begin{enumerate}
  \item[$(i)$] For all $m,n\in\dN_0$
  \begin{gather}\label{eq_XnXm}
X^{*m}X^n=q^{(m^2+m-n^2+n-2mn)/4}U^{n-m}C^{m+n},
\end{gather}
Where $U^{-k}$ denotes $U^{*k},\ k\in\dN.$ In particular, $\cD^\infty(X)=\cap_{m,n\in\dN}\cD(X^{*m}X^n)=\cD^\infty(C)$ is dense in $\cH.$
  \item[$(ii)$] The set $\cD^\infty(X)$ is a core of $X^{*m}X^n$ for all $m,n\in\dN_0.$
  \item[$(iii)$] The set $\cD^\infty(X)$ is invariant under $U$ and $U^*.$
\end{enumerate}
\end{lemma}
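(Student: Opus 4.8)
The plan is to reduce everything to properties of the single positive self-adjoint operator $C$, using the polar decomposition $X=UC$ and the commutation relation $UCU^*=q^{1/2}C$ from Proposition \ref{prop_char}$(iv)$. First I would establish $(i)$: using $X=UC$, $X^*=CU^*$ and the relation $CU^*=q^{1/2}U^*C$ (equivalently $U^*C=q^{-1/2}CU^*$, and its iterates $U^{*k}C^j=q^{-jk/2}C^jU^{*k}$), I would commute all occurrences of $U,U^*$ to the left past all occurrences of $C$ in the word $X^{*m}X^n=(CU^*)^m(UC)^n$. The middle block $U^{*m}U^n$ collapses to $U^{n-m}$ up to a factor of $E_C(\{0\})^{\perp}$-type issues; since $\ker C=\ker U$, one checks $U^*U$ and $UU^*$ both act as the identity on $\Ran C\supseteq\cD^\infty(C)$, so on the relevant domain the telescoping $U^{*m}U^n$ is genuinely $U^{n-m}$. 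Collecting the scalar powers of $q$ arising each time $U$ or $U^*$ is moved past a $C$ yields the exponent $(m^2+m-n^2+n-2mn)/4$; this is a bookkeeping computation which I would only sketch, checking it against the special cases $m=0$ and $n=0$. From \eqref{eq_XnXm} with $m=n$ we get $X^{*n}X^n=q^{(n-n^2)/4}\,C^{2n}$ (up to the scalar), so $\cD(X^{*n}X^n)=\cD(C^{2n})$, and since $\cD(X^n)\subseteq\cD(X^{*n}X^n)$ with the reverse containment following from $\|X^n\varphi\|^2=\langle X^{*n}X^n\varphi,\varphi\rangle$ being controlled by $\|C^{2n}\varphi\|\,\|\varphi\|$, one obtains $\cD^\infty(X)=\cD^\infty(C)$, which is dense because $C$ is self-adjoint.

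For $(iii)$ I would argue that $\cD^\infty(C)$ is invariant under $U$ and $U^*$: from $UCU^*=q^{1/2}C$ we get $UC^jU^*=q^{j/2}C^j$ on $\cD(C^j)$, hence $UC^j=q^{j/2}C^jU$ on $\Ran U^*\cap\cD(C^j)$; more cleanly, $U$ maps $\cD(C^j)$ into $\cD(C^j)$ because $C^jU\varphi=q^{-j/2}UC^j\varphi$ is well-defined whenever $C^j\varphi$ is, and similarly $U^*$ maps $\cD(C^j)$ into $\cD(C^j)$. Intersecting over all $j$ gives invariance of $\cD^\infty(C)=\cD^\infty(X)$ under both $U$ and $U^*$. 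Then $(ii)$: to show $\cD^\infty(X)$ is a core for $X^{*m}X^n$, by \eqref{eq_XnXm} and the invariance from $(iii)$ it suffices to show $\cD^\infty(C)$ is a core for $C^{m+n}$, which is a standard fact for a self-adjoint operator $C$ (for instance $\cD^\infty(C)\supseteq\Ran E_C([0,N])$ for every $N$, and $\cup_N\Ran E_C([0,N])$ is a core for every power of $C$), and then transport this core property through the bounded operator $U^{n-m}$ (or $U^{*(m-n)}$), noting that multiplication by the bounded operator $q^{(\cdots)/4}U^{n-m}$ carries a core of $C^{m+n}$ to a core of $X^{*m}X^n$ provided it also respects the graph norm — which it does since $\|X^{*m}X^n\varphi\|=q^{(\cdots)/4}\|C^{m+n}\varphi\|$.

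The main obstacle I anticipate is purely the care needed with unbounded operators: making sure the formal manipulation $(CU^*)^m(UC)^n = (\text{scalar})\,U^{n-m}C^{m+n}$ holds as an \emph{equality of operators including domains}, not merely on a common core. The subtlety is that $U$ is only a partial isometry, so $U^*U$ and $UU^*$ are projections rather than the identity; one must verify that at each stage the vectors produced lie in $\Ran C = (\ker C)^\perp$, where these projections act trivially, and that no domain is inadvertently enlarged or shrunk when commuting $C^j$ past $U^{\pm 1}$. A clean way to handle this is to first prove the identity on the core $\cD^\infty(C)$ where everything is manifestly legitimate, then observe that both sides are closed operators (the right side because $C^{m+n}$ is self-adjoint hence closed and $U^{n-m}$ is bounded; the left side because it is of the form $X^{*m}X^n$ with appropriate closedness from $X$ being closed) whose closures agree, and finally check the domains coincide via the norm identity $\|X^{*m}X^n\varphi\|^2 = q^{(\cdots)/2}\|C^{m+n}\varphi\|^2$ together with $\cD(X^{*m}X^n)\subseteq\cD(C^{m+n})$ and conversely. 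Once this is pinned down, parts $(ii)$ and $(iii)$ follow quickly from standard self-adjoint-operator facts about $C$.
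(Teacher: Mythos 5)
Your proposal is correct and follows essentially the same route as the paper: both derive \eqref{eq_XnXm} by commuting $U,U^*$ past $C$ via Proposition \ref{prop_char}$(iv)$, obtain $(ii)$ from the norm identity $\|X^{*m}X^n\varphi\|=q^{(\cdots)/4}\|C^{m+n}\varphi\|$ together with $\cD^\infty(C)$ being a core for powers of the self-adjoint operator $C$, and obtain $(iii)$ from the invariance of $\cD(C^k)$ under $U$ and $U^*$. You are in fact more explicit than the paper about the partial-isometry and domain subtleties, which the paper's proof passes over silently.
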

\begin{proof}
\noindent$(i):$ By Proposition \ref{prop_char}, $(iv)$ we have $UC=q^{1/2}CU,$ which implies
\begin{gather*}
X^{*m}X^n=(CU^*)^{m}(UC)^n=q^{[(m(m+1))/4-(n(n-1))/4]}U^{*m}C^mU^nC^n=\\
=q^{(m^2+m-n^2+n-2mn)/4}U^{n-m}C^{m+n}.
\end{gather*}

\noindent$(ii):$ Since $\cD^\infty(X)=\cD^\infty(C)$ and $C$ is self-adjoint, $\cD^\infty(X)$ is a core of $C^m,\ m\in \dN.$ It follows from (\ref{eq_XnXm}) that $\norm{X^{*m}X^n\varphi}=q^{(m^2+m-n^2+n-2mn)/4}\norm{C^{m+n}\varphi},\ \varphi\in\cD^{\infty}(C),$ which implies the assertion.

\noindent$(iii):$ By Proposition \ref{prop_char}, $(iv),$ we have $UC^k=q^{k/2}C^kU,\ k\in\dZ.$ Hence $\cD(C^k)$ is invariant for $U$ for all $k\in\dZ,$ i.e. $\dC^\infty$ is invariant for $U.$ In the same way one shows that $\cD^\infty(X)$ is invariant for $U^*.$
\end{proof}



\section{Positive $q$-polynomials}\label{positive}
Recall that
$$
\cA=\dC\langle x,x^*|\ xx^*=qx^*x\rangle,
$$
where $q$ is a positive real number. Since the set $\{x^{*m}x^n; m,n\in \dN_0\}$ is a vector space basis of $\cA$, each
element $f\in\cA$ can be written uniquely as
 $f=\sum_{m,n}\alpha_{mn}x^{*m}x^n,$ where $a_{mn}\in \dC$.
We define the \emph{degree} of $f$ by $\deg f:=\max\set{m{+}n|\ \alpha_{mn}\neq 0}.$ We will also refer to an element $f$ of $\cA$ as a \emph{$q$-polynomial} and write $f=f(x,x^*).$ If $X$ is a $q$-normal operator, then $f(X,X^*)$ denotes the operator $\sum_{m,n}\alpha_{mn}X^{*m}X^n.$

\begin{defn}\label{defn_pos}
An element $f=f^*\in\cA$ is called \emph{positive} if
\begin{align*}
\langle f(X,X^*)\varphi,\varphi\rangle\geq 0\ \mbox{for}\ \varphi\in\cD^\infty(X)
\end{align*}
for every $q$-normal operator $X$ and every $\varphi\in\cD^\infty(X).$ The set of positive elements of $\cA$ is denoted by $\cA_+.$
\end{defn}
 With this notion of positivity one can develop a non-commutative real algebraic geometry on the complex $q$-plane. In this section we investigate positive elements and sum of squares in $\cA$. In Section \ref{strictpositivstellensatz} we prove a strict Positivstellensatz for $\cA.$

\begin{defn}\label{defn_wellbeh}
A $*$-representation $\pi$ of $\cA$ is called {\it well-behaved} if there is a $q$-normal operator $X$ such that $\cD^\infty(X)=\cD(\pi)$ and $\pi(x)=X\upharpoonright\cD^\infty(X).$
\end{defn}
By Lemma \ref{lemma_dense} the domain $\cD^\infty(X)$ is a core of a $q$-normal operator $X,$ so there is a one-to-one correspondence between $q$-normal operators and well-behaved representations of $\cA$.

Further, an element $f=f^\ast\in\cA$ is in $\cA_+$ if and only if $\pi(f)\geq 0$ for every well-behaved $*$-representation $\pi.$ Thus our definition of positive elements fits into the definition of positivity via $\ast$-representations proposed in \cite{s2}.

\medskip
\noindent\textbf{Remarks.} 1. If $\mu$ is a positive Borel measure on $\dR_+$ satisfying (\ref{eq_muD}) and $X_\mu$ is the $q$-normal operators defined by (\ref{eq_Amu}), we denote the corresponding well-behaved $*$-representation of $\cA$ by $\pi_\mu.$ That is,
\begin{gather}\label{eq_pimu}
\pi_\mu(x)=X_\mu\upharpoonright\cD^\infty(X_\mu),\ \cD^\infty(X_\mu)=\cD(\pi_\mu).
\end{gather}

\noindent 2. The $\ast$-algebra $\cA$ has a natural $\dZ$-grading given by $\deg x=1$ and $\deg x^*=-1.$ In \cite{ss} a notion of well-behaved $*$-representations was introduced for class of group graded $*$-algebras which contains $\cA.$ It can be shown that Definition \ref{defn_wellbeh} is equivalent to corresponding definition of well-behavedness in \cite{ss}, see Definition 11 therein.

\medskip
Suppose that $p\in\dR[t].$ We consider the following questions:

\begin{center}{\it When $p(x+x^*)\in\cA_+$? When $p(x+x^*)\in\sum\cA^2$?}\end{center}
First we consider the case when $\deg p=2.$

Below we will need the following
\begin{lemma}\label{lemma_SDP}
Let $f=\sum_{m,n}\alpha_{mn}X^{*m}X^n\in\cA,\ \deg f=2N,\ N\in\dN.$ Denote by $w_N$ the column vector of monomials
$$
w_N=(1\quad x\quad x^*\quad x^2\quad x^*x\quad x^{*2}\ \dots\ x^{*N})^T,
$$
and let $w_N^*=(1\quad x\quad x^*\quad x^{*2}\quad x^*x\quad x^2\dots).$
Then $f$ is a sum of squares in $\cA$ if and only if there exists a positive semidefinite $(N+1)(N+2)/2\times(N+1)(N+2)/2$ complex matrix $C$ such that $f=w_N^*Cw_N^{}.$
\end{lemma}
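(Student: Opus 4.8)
The plan is to prove this by adapting the standard Gram matrix (``Shor relaxation'') characterization of sums of squares in commutative polynomial rings to the $q$-commutation setting, taking care of the ordering of monomials dictated by the relation $xx^*=qx^*x$.

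First I would establish the easy direction. Suppose $f = w_N^* C w_N$ for some positive semidefinite matrix $C$ of the stated size. Since $C$ is positive semidefinite, write $C = B^*B$ for some complex matrix $B$. Let $v = B w_N$; then $v$ is a column vector of $q$-polynomials and $f = v^* v = \sum_j v_j^* v_j$, which is manifestly a sum of squares in $\cA$. The only thing to check here is that $w_N^* C w_N$ is unambiguous as an element of $\cA$: the entries of $w_N$ and $w_N^*$ are monomials $x^{*m}x^n$ (in the PBW-ordered basis), and the product $(x^{*j}x^i)(x^{*m}x^n)$ can be rewritten in the basis $\{x^{*a}x^b\}$ using $xx^* = qx^*x$ (this just reorders the middle $x^i x^{*m}$ into $q^{im}x^{*m}x^i$ and contributes a scalar), so the expression is well-defined.

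The harder direction is that every sum of squares has such a representation, and here the main obstacle is a degree/leading-term argument showing no cancellation of top-degree terms. Suppose $f = \sum_{k} g_k^* g_k$ with $\deg f = 2N$. Using the $\dZ$-grading $\deg x = 1$, $\deg x^* = -1$ mentioned in the excerpt (distinct from the total degree), one sees that each $g_k^* g_k$ is a sum of graded pieces; but the cleaner route is the total-degree filtration. I would argue that if some $g_k$ had total degree $> N$, the highest-total-degree contributions $\sum_k (\text{top of } g_k)^* (\text{top of } g_k)$ cannot cancel: evaluating in a well-behaved representation $\pi_\mu$ (or directly using a faithful representation), the operator $\sum_k (\text{top of }g_k)(X,X^*)^* (\text{top of }g_k)(X,X^*)$ is a nonzero positive operator whose ``symbol'' survives — concretely, pairing against vectors concentrated at large values of $C$ in $L_2(\dR_+,d\mu)$ forces the leading coefficients to vanish, a contradiction. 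Hence every $g_k$ lies in the span of the monomials listed in $w_N$, i.e. $g_k = \beta_k^* w_N$ for a coefficient row vector $\beta_k$ (noting that any monomial $x^{*m}x^n$ with $m+n\le N$ appears, up to a scalar from the $q$-relation, in $w_N$). Then $f = \sum_k (\beta_k^* w_N)^*(\beta_k^* w_N) = w_N^* \bigl(\sum_k \beta_k \beta_k^*\bigr) w_N$, so $C := \sum_k \beta_k \beta_k^*$ is positive semidefinite of the required size.

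I expect the genuinely delicate point to be making the ``no top-degree cancellation'' step rigorous in the non-commutative, unbounded-operator setting: one must be careful that the total-degree-$2N$ part of $f$ determines, and is determined by, a Hermitian form in the leading coefficients of the $g_k$, and that positivity of this form is equivalent to nonnegativity tested on vectors of the form $\varphi$ localized near $\{C = R\}$ for $R \to \infty$. Once that asymptotic is in hand — it reduces, via the spectral picture of Theorem~\ref{thm_spec} and the explicit formula \eqref{eq_XnXm} for $X^{*m}X^n = (\text{scalar})\,U^{n-m}C^{m+n}$, to a statement about the multiplication operator by $t$ on $L_2(\dR_+,d\mu)$ — the combinatorial bookkeeping identifying the surviving monomials with the entries of $w_N$ is routine. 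A minor additional check is that the map from Hermitian forms on the $(N+1)(N+2)/2$-dimensional coefficient space to degree-$\le 2N$ elements of $\cA$ is well-defined and that positive semidefiniteness is preserved under the sums $\sum_k \beta_k\beta_k^*$, which is immediate.
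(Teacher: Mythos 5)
Your proposal is correct and follows essentially the same Gram-matrix argument as the paper, whose proof consists exactly of the two rank-one decompositions you give, with the degree bound $\deg f_i\le N$ simply asserted. The one point you flag as delicate --- that the top-degree parts of the $g_k$ cannot cancel --- does not need the analytic machinery you sketch: since $(x^{*a}x^b)(x^{*c}x^d)=q^{bc}\,x^{*(a+c)}x^{b+d}$, the algebra $\cA$ is $\dN_0^2$-graded, and the coefficient of $x^{*d}x^{d}$ in $\sum_k (g_k)_d^{\,*}(g_k)_d$ equals $\sum_k\sum_{m+n=d}q^{m^2}|\beta^k_{mn}|^2$, which vanishes only if all leading coefficients vanish.
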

\begin{proof}
Asume $f=\sum_i f_i^*f_i^{},\ f\in\cA.$ Then $\deg f_i\leq N,$ and there exist row vectors $a_i\in\dC^{(N+1)(N+2)/2}$ such that $f_i=a_i w_N.$ It implies that $f=\sum_i (a_i w_N)^*a_i w_N=w_N^*Cw_N^{},$ where $C=\sum_i a_i^*a_i$ is positive semidefinite.

On the other hand, if $f=w_N^*Cw_N^{},\ C\geq 0,$ then $C$ is a sum of rank one positive semidefinite matrices. That is there exist row vectors $a_i\in\dC^{(N+1)(N+2)/2}$ such that $C=\sum_i a_i^*a_i,$ which implies that $f=\sum_i (a_i w_N)^*a_i w_N\in\sum\cA^2.$
\end{proof}

\begin{prop}\label{prop_sos_deg2}
Let $a, b\in \dR$.
The element $L:=(x+x^*)^2-2a(x+x^*)+b$ is in $\sum \cA^2$ if and only if
$b \geq \dfrac{4a^2 q}{(q+1)^2}.$
\end{prop}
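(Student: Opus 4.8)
The plan is to use Lemma \ref{lemma_SDP} with $N=1$, so that $L$ is a sum of squares if and only if it can be written as $w_1^* C w_1$ for some positive semidefinite $3\times 3$ matrix $C$, where $w_1 = (1\ \ x\ \ x^*)^T$ and $w_1^* = (1\ \ x\ \ x^*)$. First I would expand the product $w_1^* C w_1$ for a generic Hermitian $C=(c_{ij})$ and collect coefficients in the PBW basis $\{x^{*m}x^n\}$, using the relation $xx^* = qx^*x$ to put the $xx^*$ term into normal form. This produces a linear system matching the entries of $C$ to the coefficients of $L = x^2 + x^{*2} + (q+1)x^*x - 2a x - 2a x^* + b$ (note $x x^* + x^* x = (q+1)x^*x$). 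The off-diagonal and the $x^2$, $x^{*2}$ entries of $C$ will be pinned down uniquely (up to the usual phase freedom), the diagonal entries $c_{22}, c_{33}$ corresponding to the coefficient of $x^* x$ will be constrained only by $c_{22} + q\, c_{33} = q+1$ (or a similar relation depending on how one accounts the $xx^*\to qx^*x$ rewriting), and $c_{11} = b$.

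Then the problem reduces to: for which $b$ does there exist a PSD Hermitian $C$ with the prescribed $(1,2),(1,3),(2,3)$ entries, prescribed $c_{11}=b$, and $c_{22},c_{33}$ free subject to one linear constraint? I would parametrize the free diagonal entries and write the positive semidefiniteness condition via the leading principal minors (Sylvester-type criterion for the closed PSD cone, being careful about the boundary cases where a minor vanishes). The key relations are $c_{12} = -a$ (up to phase), $c_{13} = -a$, $c_{23} = 1$ (these come from matching the coefficients of $x$, $x^*$, $x^2$, $x^{*2}$), and then optimizing: the smallest achievable $b$ is obtained by choosing $c_{22}, c_{33}$ to make the $2\times 2$ block as favorable as possible while the $3\times 3$ determinant condition, after eliminating $b$, yields the sharp bound. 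I expect the optimization over the one-parameter family $(c_{22}, c_{33})$ with $c_{22}+q c_{33}$ fixed to be where the specific value $\frac{4a^2 q}{(q+1)^2}$ emerges — essentially an AM–GM or Cauchy–Schwarz step showing $\min b = \frac{(2a)^2}{\,c_{22}^{-1}+\cdots}$ type expression is minimized exactly when the split of $q+1$ between $c_{22}$ and $q c_{33}$ is balanced against the off-diagonal data.

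The main obstacle, and the part requiring genuine care rather than routine computation, is handling the interaction between the $xx^* \mapsto q x^* x$ rewriting and the matrix form: the map $C \mapsto w_1^* C w_1$ is not injective on Hermitian matrices once we reduce to normal form, so one must correctly identify which linear combination of $c_{22}$ and $c_{33}$ is constrained and which remains free, and then prove the freedom is genuine (i.e., that any admissible choice of the free parameter does give a valid PSD $C$ realizing that $b$). For the "only if" direction I would run the argument backwards: given $L \in \sum\cA^2$, Lemma \ref{lemma_SDP} forces existence of such a $C \geq 0$, and then the determinant inequality $\det C \geq 0$ together with the $2\times 2$ minor conditions forces $b \geq \frac{4a^2 q}{(q+1)^2}$. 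For the "if" direction, exhibit the optimal $C$ explicitly (with the balanced choice of diagonal entries) and verify it is PSD, which gives the sum-of-squares decomposition. I would double-check the edge case $a = 0$, where the bound reads $b \geq 0$ and $L = (x+x^*)^2 + b$, consistent with $(x+x^*)^2$ being a square and $b \geq 0$ a nonnegative scalar.
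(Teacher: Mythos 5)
Your framework (Lemma \ref{lemma_SDP} with $N=1$, then an SDP feasibility analysis of the Gram matrix $C$) is the same as the paper's, but there is a concrete error in the linear constraints you extract, and it is fatal to both directions as you have set them up. Expanding $w_1^*Cw_1$ in the basis $\{x^{*m}x^n\}$, the coefficient of $x$ is $c_{12}+c_{31}$ and the coefficient of $x^*$ is $c_{13}+c_{21}$; by Hermiticity these give the \emph{single} constraint $c_{12}+\overline{c_{13}}=-2a$, not $c_{12}=c_{13}=-a$. You correctly warn that $C\mapsto w_1^*Cw_1$ is not injective after normal-ordering and apply this to the pair $(c_{22},c_{33})$, but the same phenomenon hits the pair $(c_{12},c_{13})$ (because $w_1$ contains both $x$ and $x^*$), and this extra two-real-parameter freedom is not a ``phase freedom.'' It is exactly where the $q$-dependence of the answer lives: the extremal Gram matrix is the rank-one matrix coming from $f=q^{-1/2}\bigl(-2aq(1+q)^{-1}+qx+x^*\bigr)$, which has the \emph{asymmetric} split $c_{12}=-2aq(1+q)^{-1}$, $c_{13}=-2a(1+q)^{-1}$. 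If you freeze $c_{12}=c_{13}=-a$ and then optimize only over $(c_{22},c_{33})$ with $c_{22}+qc_{33}=q+1$ and $c_{22}c_{33}\geq 1$, the Schur-complement bound you get is $b\geq a^2$ (attained at $c_{22}=c_{33}=1$), which for $q\neq 1$ is strictly larger than $\frac{4a^2q}{(q+1)^2}$. So the ``if'' direction cannot reach the claimed threshold, and the ``only if'' direction would purport to prove the false inequality $b\geq a^2$.

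The fix is to optimize over all of $c_{12},c_{13},c_{22},c_{33}$ subject to the two genuine linear constraints, which is doable but messier than you suggest. The paper avoids the primal optimization entirely: for ``only if'' it exhibits a fixed positive semidefinite ``dual certificate'' matrix $A$ (with entries built from $\alpha_1=1$, $\alpha_2=\frac{2aq}{(1+q)^2}$, $\alpha_4$, arranged so that $\tr(AC)$ depends only on the constrained linear combinations of the $c_{ij}$ and equals $b-\frac{4a^2q}{(q+1)^2}$) and uses $\tr(AC)\geq 0$ for a product of positive semidefinite matrices; for ``if'' it simply verifies the identity $(x+x^*)^2-2a(x+x^*)+\frac{4a^2q}{(q+1)^2}=f^*f$ with the $f$ above and adds a constant square. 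If you want to keep your primal route, you must at minimum carry the free complex parameter $c_{12}$ through the Schur-complement computation and show the minimum of $c_{11}$ over the full feasible set is $\frac{4a^2q}{(q+1)^2}$; your sanity check at $a=0$ does not detect the error because the asymmetry only matters when $a\neq 0$.
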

\begin{proof} First suppose that $L\in \sum \cA^2$. Then by Lemma \ref{lemma_SDP} there is a positive semi-definite matrix $C=[c_{i,j}]_{i,j=\ov{1,3}}$ such that $L=w_1^*Cw_1^{}$.
Comparing coefficients at $x^{*m}x^n,\ m+n\leq 2$ yields
\begin{eqnarray*}
b & = & c_{11} \\
-2a &= & c_{13}+c_{21} = c_{31}+c_{12} \\
1 & = & c_{23}=c_{32} \\
q+1 & = & c_{22}+qc_{33}
\end{eqnarray*}
By multiplying these equations with $\alpha_1=1$, $\alpha_2=\frac{2aq}{(1+q)^2}$,
$\alpha_3=0$ and $\alpha_4=\frac{4a^2 q^2}{(1+q)^3}$, respectively, and adding them we derive
$$
b-\frac{4a^2 q}{(q+1)^2}=\tr
\left[ \begin{array}{ccc} \alpha_1 & \alpha_2 & \alpha_2 \\ \alpha_2 & \alpha_4 & \alpha_3 \\\alpha_2 & \alpha_3 & q\alpha_4 \end{array} \right]
\left[ \begin{array}{ccc} c_{11} & c_{12} & c_{13} \\ c_{21} & c_{22} & c_{23} \\c_{31} & c_{32} & c_{33} \end{array} \right].
$$
Both matrices in the preceding equation are positive semidefinite. Therefore, since the trace of the product of two positive semidefinite matrices is nonnegative, $b-\frac{4a^2 q}{(q+1)^2}\geq 0$.

Conversely, suppose that $b \geq 4a^2 q(q+1)^{-2}.$ Setting
$$
f=q^{-1/2}(-2aq(1+q)^{-1}+qx+x^*),
$$
we compute
\begin{gather}\label{eq_c4}
(x+x^*)^2-2a(x+x^*)+\frac{4a^2 q}{(q+1)^2}= f^*f.
\end{gather}
Hence $L=f^*f+((b {-} 4a^2 q(q+1)^{-2})^{1/2}\Un)^2.$
\end{proof}

The preceding result has the following interesting application.
\begin{prop}
Let $X\neq 0$ be a $q$-normal operator. Then each non-zero real number is a regular point for the symmetric operator $X+X^*$. In particular, $X+X^*$ is not essentially self-adjoint.
\end{prop}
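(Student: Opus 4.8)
The plan is to deduce this directly from Proposition \ref{prop_sos_deg2} together with the fact that sums of squares are mapped to positive operators by well-behaved representations. Fix a nonzero $q$-normal operator $X$ and let $\lambda\in\dR$, $\lambda\neq 0$. I want to find $c_\lambda>0$ with $\norm{(X+X^*-\lambda I)\varphi}\geq c_\lambda\norm{\varphi}$ for all $\varphi$ in a core of $X+X^*$; since both sides are continuous in the graph norm of $X+X^*$ this extends to all of $\cD(X+X^*)$, and by Lemma \ref{lemma_dense} the dense subspace $\cD^\infty(X)$ is such a core. On $\cD^\infty(X)$ one has $\langle (X+X^*-\lambda I)^*(X+X^*-\lambda I)\varphi,\varphi\rangle = \langle L(X,X^*)\varphi,\varphi\rangle$ where $L=(x+x^*)^2-2a(x+x^*)+b$ with $a=\lambda/2$ and $b$ to be chosen.

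The key step is then to choose $b$ strictly between $c_\lambda^2$ and the threshold in Proposition \ref{prop_sos_deg2}. Concretely, pick any $b$ with $0<b<\dfrac{4a^2 q}{(q+1)^2}=\dfrac{\lambda^2 q}{(q+1)^2}$ — this is possible precisely because $\lambda\neq 0$ forces the right-hand side to be strictly positive, which is where $X\neq 0$ (equivalently $q$-normality with a genuine involution relation) enters. Wait — I should instead argue in the other direction: I want $L - b'\Un\in\sum\cA^2$ for some $b'>0$. Write $L' := (x+x^*)^2-2a(x+x^*)+b$ with $b = \dfrac{4a^2q}{(q+1)^2}$; by \eqref{eq_c4} this equals $f^*f$ for the explicit $f$ in the proof of Proposition \ref{prop_sos_deg2}. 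Hence for $\varphi\in\cD^\infty(X)$,
\begin{gather*}
\norm{(X+X^*-\lambda I)\varphi}^2 = \langle f(X,X^*)^*f(X,X^*)\varphi,\varphi\rangle + \Big(\dfrac{\lambda^2 q}{(q+1)^2}-b\Big)\norm{\varphi}^2 \cdot(-1) + \text{correction},
\end{gather*}
so really: $\norm{(X+X^*-\lambda I)\varphi}^2 = \norm{f(X,X^*)\varphi}^2 - \dfrac{\lambda^2 q}{(q+1)^2}\norm{\varphi}^2 + \lambda^2\norm{\varphi}^2$, since the constant term of $L$ as written is $b=\lambda^2/4\cdot 4/\cdots$; cleanest is: $(X+X^*-\lambda I)^*(X+X^*-\lambda I) = (x+x^*)^2 - \lambda(x+x^*)\cdot 2\cdot\tfrac12\cdots + \lambda^2\Un$ evaluated at $X$, i.e. it equals $L_0(X,X^*)$ with $a=\lambda/2$, $b=\lambda^2$. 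Then $L_0 - \big(\lambda^2 - \tfrac{\lambda^2 q}{(q+1)^2}\big)\Un = (x+x^*)^2-\lambda(x+x^*)+\tfrac{\lambda^2 q}{(q+1)^2}\Un = f^*f\in\sum\cA^2$, so $\langle L_0(X,X^*)\varphi,\varphi\rangle \geq \big(\lambda^2-\tfrac{\lambda^2 q}{(q+1)^2}\big)\norm{\varphi}^2$, and $\lambda^2-\tfrac{\lambda^2 q}{(q+1)^2} = \lambda^2\cdot\tfrac{(q-1)^2+q}{(q+1)^2}>0$ for all $q>0$. Thus $c_\lambda := |\lambda|\cdot\dfrac{\sqrt{q^2-q+1}}{q+1}$ works.

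For the final sentence I would note: an essentially self-adjoint symmetric operator has the property that $\Ran(A\mp iI)$ is dense; equivalently its closure has spectrum in $\dR$. But here \emph{every} nonzero real $\lambda$ is a regular point, so $\lambda$ lies in the resolvent set of the closure $\overline{X+X^*}$ (the estimate gives injectivity with closed range, and a symmetry argument or the analogous estimate for $\overline{X+X^*}$ gives surjectivity onto a dense set — actually regularity of $\lambda$ for the symmetric operator already prevents $\lambda$ from being in the approximate point spectrum of any self-adjoint extension restricted suitably). The cleanest route: if $X+X^*$ were essentially self-adjoint, its closure would be self-adjoint, and a self-adjoint operator that is unbounded (which $\overline{X+X^*}$ is, since $X\neq 0$) must have unbounded spectrum, hence some nonzero real $\lambda$ in its spectrum — contradicting that $\lambda$ is a regular point. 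Since $X\neq 0$ implies $X+X^*$ is unbounded (indeed $\norm{(X+X^*)^n}$ grows; or simply $X+X^*$ bounded would force $X$ bounded and then the relation plus nonzero would still allow unboundedness-free cases — I should check $X+X^*$ is genuinely unbounded, e.g. from the irreducible model in Proposition \ref{prop_des_irr} where $Xe_k=\lambda q^{-k/2}e_{k+1}$ has unbounded coefficients as $k\to-\infty$ if $q>1$ or $k\to+\infty$ if $q<1$).

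The main obstacle I anticipate is the bookkeeping around cores and the passage from the estimate on $\cD^\infty(X)$ to all of $\cD(X+X^*)$, and giving a clean justification that $X+X^*$ is not essentially self-adjoint — I would handle the latter by invoking that a symmetric operator for which some $\lambda\in\dR$ is a regular point but which is not (yet) self-adjoint cannot be essentially self-adjoint unless $\lambda$ also lies in the resolvent set of the closure, and then observing the closure is unbounded so cannot have all of $\dR\setminus\{0\}$ in its resolvent set; alternatively one can quote the standard fact that for an essentially self-adjoint operator the deficiency spaces are trivial and the spectrum of the closure is exactly the complement of the set of regular points, which here would be $\{0\}$ or $\emptyset$, impossible for an unbounded self-adjoint operator.
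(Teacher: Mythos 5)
Your overall strategy is exactly the paper's: expand $(X+X^*-\lambda)^*(X+X^*-\lambda)$ as a $q$-polynomial, subtract the largest constant allowed by Proposition \ref{prop_sos_deg2} so that what remains is the square $f^*f$ from (\ref{eq_c4}), deduce the lower bound $\norm{(X+X^*-\lambda)\varphi}\geq c_\lambda\norm{\varphi}$ on $\cD^\infty(X)$, extend it to $\cD(X+X^*)$ via the core property from Lemma \ref{lemma_dense}, and then rule out essential self-adjointness. However, there is a concrete computational error in the key step. Matching $(x+x^*-\lambda)^2=(x+x^*)^2-2\lambda(x+x^*)+\lambda^2$ with the normal form $L=(x+x^*)^2-2a(x+x^*)+b$ of Proposition \ref{prop_sos_deg2} forces $a=\lambda$, not $a=\lambda/2$; your intermediate expression $(x+x^*)^2-\lambda(x+x^*)+\lambda^2$ is not $(x+x^*-\lambda)^2$. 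With $a=\lambda$ the subtractable constant is $\lambda^2-\frac{4\lambda^2q}{(q+1)^2}=\lambda^2\bigl(\frac{q-1}{q+1}\bigr)^2$, so the correct constant is $c_\lambda=|\lambda|\,\bigl|\frac{q-1}{q+1}\bigr|$, not $|\lambda|\sqrt{q^2-q+1}\,(q+1)^{-1}$. The discrepancy is not cosmetic: your constant is strictly positive at $q=1$, so your argument would ``prove'' that every nonzero real number is a regular point of $X+X^*$ for a nonzero \emph{normal} operator $X$, which is false (take $X$ self-adjoint). The correct constant vanishes precisely at $q=1$, which is the sanity check that should have flagged the slip; the proposition is genuinely a statement about $q\neq 1$. (There is also a minor arithmetic slip: $\lambda^2-\lambda^2q(q+1)^{-2}=\lambda^2(q^2+q+1)(q+1)^{-2}$, not $\lambda^2(q^2-q+1)(q+1)^{-2}$; this becomes moot once the factor of $2$ is fixed.)

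For the final assertion your route is workable but more laborious than necessary. The paper argues: if $T:=\overline{X+X^*}$ were self-adjoint, then every nonzero real number, being a regular point, would lie in $\rho(T)$, so $\sigma(T)=\set{0}$ and hence $T=0$; but $X+X^*=0$ gives $\norm{X^*\varphi}=\norm{X\varphi}$, which combined with $\norm{X^*\varphi}=\sqrt q\,\norm{X\varphi}$ and $q\neq 1$ forces $X=0$, a contradiction. This avoids your detour through unboundedness of $X+X^*$ (about which you were rightly hesitant) entirely.
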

\begin{proof}
Suppose that $a\in\dR\setminus\set{0}$. It follows from (\ref{eq_c4}) that the element
$$
(x+x^*)^2-2a(x+x^*)+\frac{4a^2 q}{(q+1)^2}=((x+x^*)-a)^2-a^2\left(\frac{q-1}{q+1}\right)^2
$$
is a square in $A.$ This implies that
\begin{align}\label{regpoint}
\norm{(X+X^*-a)\varphi}\geq a\left|\frac{q-1}{q+1}\right|\norm{\varphi},
\end{align}
for $\varphi\in\cD^\infty(X)$. Since $\cD^\infty(X)$ is a core for $X+X^\ast$ by Lemma \ref{lemma_dense} $(ii)$, the inequality (\ref{regpoint}) holds for all $\varphi\in \cD(X{+}X^\ast)$. This shows that $a$ is a regular point for $X+X^*.$

Let $T$ denote the closure of $X+X^*$ and assume to the contrary that $T$ is self-adjoint. The numbers of $\dR\setminus\set{0}$ are regular points for $X+X^\ast$ and hence for the selfadjoint operator $T$. Therefore, $\dR\setminus\set{0}\subseteq \rho(T),$ so that $\sigma(T)=\set{0}$. The latter implies that $X+X^*=0$ which is impossible for $X\neq 0.$
\end{proof}

For a positive Borel measure $\mu$ on $\dR_+$ satisfying (\ref{eq_muD}) we define a linear functional $F_\mu$ on $\cA$ by
\begin{gather}\label{eq_pmu}
F_{\mu}(f)=\langle\pi_{\mu}(f)\One_{\Delta_q},\One_{\Delta_q}\rangle,~~f\in \cA,
\end{gather}
where $\pi_\mu$ is defined by (\ref{eq_pimu}). 
\begin{lemma}\label{lemma_cyc}
Let $\pi_\mu$ be the $*$-representation of $\cA$ defined by (\ref{eq_pimu}) and let $f=f^*\in\cA.$ Then we have $\pi_\mu(f)\geq 0$ if and only if
\begin{gather}\label{eq_pos_pmu}
F_\mu(g^*f g)\geq 0\ \mbox{for all}\ g\in\cA.
\end{gather}
\end{lemma}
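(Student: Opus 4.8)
The plan is to prove the equivalence $\pi_\mu(f)\geq 0 \iff F_\mu(g^*fg)\geq 0$ for all $g\in\cA$ by exploiting strong cyclicity of $\pi_\mu$. The forward direction is immediate: if $\pi_\mu(f)\geq 0$, then for any $g\in\cA$ we put $\varphi=\pi_\mu(g)\One_{\Delta_q}\in\cD(\pi_\mu)$ and compute $F_\mu(g^*fg)=\langle\pi_\mu(g^*fg)\One_{\Delta_q},\One_{\Delta_q}\rangle=\langle\pi_\mu(f)\pi_\mu(g)\One_{\Delta_q},\pi_\mu(g)\One_{\Delta_q}\rangle=\langle\pi_\mu(f)\varphi,\varphi\rangle\geq 0$, using that $\pi_\mu$ is a $\ast$-representation.

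For the converse, the key point is that $\One_{\Delta_q}$ is a strongly cyclic vector for $\pi_\mu$, i.e. $\pi_\mu(\cA)\One_{\Delta_q}$ is dense in $\cD(\pi_\mu)=\cD^\infty(X_\mu)$ in the graph topology. First I would establish this cyclicity claim. Using Theorem \ref{thm_spec} and the explicit description of $X_\mu$ via (\ref{eq_Amu}), one checks that the functions $q^{-m/2}x^{*m}\One_{\Delta_q}$ and $q^{n/2}x^n\One_{\Delta_q}$ are (up to the measure-normalization constants) the indicator functions $\One_{q^{-m/2}\Delta_q}$ and $\One_{q^{n/2}\Delta_q}$, so that $\pi_\mu(\cA)\One_{\Delta_q}$ contains all simple functions supported on finitely many of the pieces $q^{k/2}\Delta_q$ — in fact Lemma \ref{lemma_dense}$(i)$ writes $X_\mu^{*m}X_\mu^n$ in terms of $U_\mu^{n-m}C_\mu^{m+n}$, and applying these to $\One_{\Delta_q}$ one sees the span contains products of characteristic functions with polynomials in $t$, which are graph-dense in $\cD^\infty(C_\mu)=\cD^\infty(X_\mu)$. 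Granting this, take an arbitrary $\varphi\in\cD(\pi_\mu)$ and a sequence $\varphi_j=\pi_\mu(g_j)\One_{\Delta_q}$ converging to $\varphi$ in the graph topology. Since $f=f^*$, the map $\psi\mapsto\langle\pi_\mu(f)\psi,\psi\rangle$ is continuous in the graph topology (it is bounded by $\norm{\psi}\cdot\norm{\pi_\mu(f)\psi}$, both graph-continuous seminorms), hence $\langle\pi_\mu(f)\varphi,\varphi\rangle=\lim_j\langle\pi_\mu(f)\varphi_j,\varphi_j\rangle=\lim_j F_\mu(g_j^*fg_j)\geq 0$. Therefore $\pi_\mu(f)\geq 0$.

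The main obstacle I anticipate is the strong cyclicity of $\One_{\Delta_q}$: one must verify not just that $\pi_\mu(\cA)\One_{\Delta_q}$ is dense in $\cH_\mu=L_2(\dR_+,d\mu)$ in the Hilbert space norm, but density in the (finer) graph topology given by the seminorms $\varphi\mapsto\norm{\varphi}+\norm{\pi_\mu(a)\varphi}$, $a\in\cA$. By Lemma \ref{lemma_dense}$(i)$ this graph topology is generated by the seminorms $\varphi\mapsto\norm{C_\mu^k\varphi}$, $k\in\dN_0$, so it suffices to approximate any $\varphi\in\cD^\infty(C_\mu)$ simultaneously in all the norms $\norm{C_\mu^k\cdot}$. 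Because $C_\mu$ acts as multiplication by $t$ and the supports $q^{k/2}\Delta_q$, $k\in\dZ$, are disjoint with union $(0,+\infty)$ (recall $\mu(\{0\})=0$ in the relevant reduced situation, or $\One_{\Delta_q}$ has zero component on $\cH_{\mu,0}$ anyway), a truncation-plus-Weierstrass argument on each bounded piece $q^{k/2}\Delta_q$ does the job; the disjointness of supports keeps the cross terms under control. Once this density statement is in hand, the rest is the routine continuity argument sketched above.
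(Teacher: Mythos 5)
Your forward direction and your treatment of the case $\mu(\{0\})=0$ match the paper's proof: the paper likewise shows that the graph topology on $\cD(\pi_\mu)=\cD^\infty(C_\mu)$ is generated by the seminorms $\|C_\mu^n\cdot\|$, that $\{p(X_\mu^*X_\mu)\varphi_k\}$ is graph-dense in each piece $L_2(q^{-k/2}\Delta_q,d\mu)$, and concludes by cyclicity of $\One_{\Delta_q}$ plus graph-continuity of $\psi\mapsto\langle\pi_\mu(f)\psi,\psi\rangle$. That part of your argument is fine.

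There is, however, a genuine gap in the case $\mu(\{0\})>0$, which the lemma must cover since condition (\ref{eq_muD}) permits an atom at $0$. You dismiss this case with ``$\One_{\Delta_q}$ has zero component on $\cH_{\mu,0}$ anyway,'' but that is precisely why your argument breaks down rather than why it is harmless: since $X_\mu\varphi$ and $X_\mu^*\varphi$ always vanish at $t=0$ and $\One_{\Delta_q}$ itself vanishes there, the whole orbit $\pi_\mu(\cA)\One_{\Delta_q}$ lies in $\cH_{\mu,0}^\perp$, so $\One_{\Delta_q}$ is \emph{not} cyclic and your density-plus-continuity argument only yields $\langle\pi_\mu(f)\varphi,\varphi\rangle\geq 0$ for $\varphi$ in (the graph closure of) $\cH_{\mu,0}^\perp\cap\cD(\pi_\mu)$. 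On $\cH_{\mu,0}$ one has $\pi_\mu(x)=0$, hence $\pi_\mu(f)\upharpoonright\cH_{\mu,0}=f(0,0)I$, and you still must show $f(0,0)\geq 0$; this does not follow from positivity on the orthogonal complement without a further argument. The paper closes this gap explicitly: writing $f=\sum_{m,n}\alpha_{mn}x^{*m}x^n$ it computes (for $q>1$) that $q^{-k}F_\mu(x^{*k}fx^k)\to\alpha_{00}=f(0,0)$ as $k\to\infty$ (the test vectors $X_\mu^k\One_{\Delta_q}$ are supported on $q^{-k/2}\Delta_q$, which shrink toward $0$, and the off-diagonal terms vanish by disjointness of supports), so (\ref{eq_pos_pmu}) forces $f(0,0)\geq 0$. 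You need to add this step (or an equivalent limiting argument with vectors supported near $0$) to make the proof complete.
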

\begin{proof}
Let $C_\mu$ be as in (\ref{eq_Umu_Cmu}). It follows from relation (\ref{eq_XnXm}) that the graph topology on $\cD(\pi_\mu)=\cD^\infty(C_\mu)$ is generated by the family of seminorms $\norm{C_\mu^n(\cdot)},\ n\in\dN_0.$ Set $\varphi_0=\One_{\Delta_q}.$ For $k\in\dZ$ define
$$
\varphi_k:=\left\{\begin{array}{ll}
            (X_\mu^k\varphi_0)(t), & \hbox{if $k\geq 0$;} \\
            (X_\mu^{*|k|}\varphi_0)(t), & \hbox{if $k<0$.}
            \end{array}
           \right.
$$
Then $\varphi_k=q^{k/2}t^k\One_{\set{q^{-k/2}\Delta_q}}(t)$ if $k\geq 0$ and $\varphi_k=t^{|k|}\One_{\set{q^{|k|/2}\Delta_q}}(t)$ if $k<0$. Since $X_\mu^*X_\mu^{}=C_\mu^2,$ for each $k\in \dZ$ the set $
\set{p(X_\mu^*X_\mu^{})\varphi_k|\ p\in\dC[t]}$
is dense in the subspace $L_2(q^{-k/2}\Delta_q,d\mu)\subseteq\cD(\pi_\mu)$ with respect to the graph topology.

Consider the case $\mu(\set{0})=0.$ Then $\cD(\pi_\mu)$ is a direct sum of $L_2(q^{k/2}\Delta_q,d\mu),\ k\in\dZ.$ Hence $\varphi_0$ is cyclic for $\pi_\mu.$ Therefore, since $F_\mu(g^*fg)=\langle \pi_\mu(f)\pi_\mu(g)\varphi_0,\pi_\mu(g)\varphi_0\rangle$ and $\pi_\mu(\cA)\varphi_0$ is dense in $\cD(\pi_\mu)$ in the graph topology, it follows that $\pi_\mu(f)\geq 0$ if and only if condition (\ref{eq_pos_pmu}) is satisfied.


If $\supp\mu=\set{0},$ then the statement is trivial. Consider the case $\supp\mu\neq\set{0},\ \mu(\set{0})\neq 0$ and let $\mu_1$ be the Borel measure on $\dR_+$ defined by $\mu_1(\Delta)=\mu(\Delta\setminus\set{0}).$ Then $\pi_\mu(x)$ is a direct sum of $0$ and $\pi_{\mu_1}(x).$ Let $f=f^*\in\cA(q)$ satisfy (\ref{eq_pos_pmu}). Since $F_\mu=F_{\mu_1},\ f$ is positive in $\pi_1.$ It suffices to prove $f(0,0)\geq 0.$ For let $f=\sum_{m,n}\alpha_{mn}x^{*m}x^n$ and $q>1.$ Then for $k\in\dN$ we have
\begin{gather*}
q^{-k}F_\mu(x^{*k}fx^k)
=q^{-k}\sum_{m,n}\alpha_{mn}\langle \pi_\mu(x^{*(k+m)}x^{k+n})\varphi_0,\varphi_0\rangle=\\
=q^{-k}\sum_{m,n}\alpha_{mn}\langle X_\mu^{k+n}\One_{\Delta_q},X_\mu^{k+m}\One_{\Delta_q}\rangle=\sum_{n}q^n\alpha_{nn}\int t^{2n}\One_{q^{-(k+n)/2}\Delta_q} dt
\end{gather*}
which converges to $\alpha_{00}=f(0,0),$ for $k\to\infty.$ In the case $q<1$ we have $q^{k}F_\mu(x^kfx^{*k})\to f(0,0),\ k\to\infty.$ It implies $f(0,0)\geq 0.$
\end{proof}
\medskip
\noindent\textbf{Remark.} In the case $\mu(\{0\})=0$ we have seen in the preceding proof that the vector $\varphi_0=\One_{\Delta_q}$ is cyclic for $\pi_\mu.$
Therefore, by uniqueness of GNS-re\-pre\-sen\-ta\-tion (see Theorem 8.6.4. in \cite{s4}), $\pi_{\mu}$ is unitarily equivalent to the GNS-re\-pre\-sen\-ta\-tion of the positive functional $F_\mu$.

Denote by $\cB=\dC[x^*x]$ the unital $*$-subalgebra of $\cA$ generated by the single element $x^*x.$ For every element $g\in\cB$ there exists a unique polynomial, denoted by $g(t)\in\dC[t],$ such that $g=g(x^*x).$ For $f=\sum_{m,n}\alpha_{mn}x^{*m}x^n\in\cA(q)$ we define
$$
\gp(f):=\sum_{n}\alpha_{nn}x^{*n}x^n=\sum_{n}\alpha_{nn}q^{n(n-1)/2}(x^*x)^n\in\cB.
$$

Then the mapping $\gp:\cA\to \cB$ is a conditional expectation as introduced in \cite{ss}. We collect some properties of $\gp$ in a lemma. We omit its simple proof.

\begin{lemma}\label{lemma_prop_gp}
Let $q>0.$
\begin{enumerate}
  \item[$(i)$] For every positive functional $F_\mu$ defined by (\ref{eq_pmu}) and every $f\in\cA$ holds $F_\mu(f)=F_\mu(\gp(f)).$ In particular,
      \begin{gather*}
        F_\mu(f)=\int_{\Delta_q}(\gp(f))(t)d\mu(t^{1/2}).
      \end{gather*}

  \item[$(ii)$] For $f\in\cA$ and $g_1,g_2\in\cB$, we have $\gp(g_1^*fg_2^{})=g_1^*\gp(f)g_2^{}.$
\end{enumerate}
\end{lemma}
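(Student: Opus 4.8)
The plan is to prove both assertions by reducing everything to the defining commutation relation $xx^\ast=qx^\ast x$, which lets us rewrite any monomial $x^{\ast m}x^n$ in the normal-ordered basis and, crucially, shows how the off-diagonal monomials behave under the functionals $F_\mu$. For part $(i)$, I would first observe that because of the $\dZ$-grading $\deg x=1$, $\deg x^\ast=-1$, every monomial $x^{\ast m}x^n$ with $m\neq n$ is homogeneous of nonzero degree $n-m$. The key computation is that $\pi_\mu(x^{\ast m}x^n)\One_{\Delta_q}$ is (up to a nonzero scalar, using the explicit formulas (\ref{eq_XnXm}) and (\ref{eq_Amu})) supported on $q^{(m-n)/2}\Delta_q$, whereas $\One_{\Delta_q}$ is supported on $\Delta_q$; since the sets $q^k\Delta_q$, $k\in\dZ$, are pairwise disjoint, the inner product $\langle\pi_\mu(x^{\ast m}x^n)\One_{\Delta_q},\One_{\Delta_q}\rangle$ vanishes whenever $m\neq n$. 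Hence only the diagonal terms $\alpha_{nn}x^{\ast n}x^n$ of $f$ contribute to $F_\mu(f)$, which is precisely the statement $F_\mu(f)=F_\mu(\gp(f))$. The integral formula then follows by evaluating $F_\mu(\gp(f))=\langle\pi_\mu(\gp(f))\One_{\Delta_q},\One_{\Delta_q}\rangle$: since $\gp(f)=\sum_n\alpha_{nn}q^{n(n-1)/2}(x^\ast x)^n$ and $\pi_\mu(x^\ast x)$ acts as multiplication by $t^2$ on $\cH_\mu=L_2(\dR_+,d\mu)$, one gets $\int_{\Delta_q}(\gp(f))(s^2)\,d\mu(s)$, which after the substitution $t=s^2$ is $\int_{\Delta_q}(\gp(f))(t)\,d\mu(t^{1/2})$ in the paper's notation.

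For part $(ii)$, I would argue purely algebraically inside $\cA$. It suffices to treat the case $g_1=g_2=\Un$ replaced by a single monomial on each side, i.e. to check $\gp\bigl((x^\ast x)^r\, x^{\ast m}x^n\,(x^\ast x)^s\bigr)=(x^\ast x)^r\,\gp(x^{\ast m}x^n)\,(x^\ast x)^s$ and extend by linearity; since $x^\ast x$ commutes with itself, a general $g_1,g_2\in\cB=\dC[x^\ast x]$ follows at once. Using $xx^\ast=qx^\ast x$ one rewrites $(x^\ast x)^r x^{\ast m}x^n (x^\ast x)^s$ as a scalar multiple of $x^{\ast(m+r+s)}x^{\,n+r+s}$ — multiplication by powers of $x^\ast x$ on either side simply shifts both indices up by the same amount and introduces a power of $q$. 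In particular this monomial is diagonal (equal powers of $x^\ast$ and $x$) if and only if $m=n$, so $\gp$ applied to it returns the monomial itself when $m=n$ and $0$ when $m\neq n$; on the other side, $\gp(x^{\ast m}x^n)$ is $x^{\ast m}x^n$ when $m=n$ and $0$ otherwise, and multiplying by $(x^\ast x)^r$ and $(x^\ast x)^s$ produces exactly the same scalar and the same monomial. Matching the two sides is then a bookkeeping check on the powers of $q$.

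I do not expect a serious obstacle here; the statement is elementary, which is why the authors omit the proof. The only point requiring a little care is the disjointness-of-supports argument in part $(i)$ — one must use the normal-ordering formula (\ref{eq_XnXm}) to see that $U^{n-m}$ shifts the support of $\One_{\Delta_q}$ by a factor $q^{(n-m)/2}$ out of $\Delta_q$ — together with the edge case $\mu(\{0\})\neq 0$, where the support of $\One_{\Delta_q}$ inside $\cH_\mu$ still meets $\{0\}$ only in a set where $x$ acts as $0$, so the off-diagonal inner products still vanish and the diagonal ones are unaffected. Everything else is a routine manipulation of the relation $xx^\ast=qx^\ast x$ and of multiplication operators on $L_2(\dR_+,d\mu)$.
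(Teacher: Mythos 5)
The paper omits the proof of this lemma as ``simple,'' so there is no written proof to compare against; your argument is correct and is clearly the intended one --- the disjointness of the supports $q^{(m-n)/2}\Delta_q$ and $\Delta_q$ (via formula (\ref{eq_XnXm})) kills the off-diagonal monomials in $(i)$, and normal-ordering with $xx^*=qx^*x$ reduces $(ii)$ to matching a single monomial and scalar on both sides. Two cosmetic points only: the relevant pairwise-disjoint family is $\{q^{k/2}\Delta_q\}_{k\in\dZ}$ rather than $\{q^{k}\Delta_q\}$ (harmless, since $m-n\in\dZ$ puts $q^{(m-n)/2}\Delta_q$ in that family), and the edge case $\mu(\{0\})\neq 0$ is vacuous because $0\notin\Delta_q$, so $\One_{\Delta_q}$ already vanishes at $0$.
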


\begin{prop}\label{prop_stand_qnorm}
Let $\cH=L_2(\dR,d\lambda),$ where $d\lambda$ is the Lebesgue measure on $\dR$. We define operators $U_0$ and $C_0$ on $\cH$ by
\begin{gather}\label{eq_qnorm_st}
(U_0\varphi)(t)=\varphi(t+1),\ (C_0\psi)(t)=q^{t/2}\psi(t),
\end{gather}
where $\cD(U_0)=\cH,\ \cD(C_0)=\set{\psi|\ t\psi(t)\in L_2(\dR,d\lambda)}$. Then $X_0:=U_0C_0$ is a $q$-normal operator and an element $f=f^*\in\cA$ is in $\cA_+$ if and only if
\begin{gather}\label{eq_pos_stand}
\langle f(X_0^{},X_0^*)\psi,\psi\rangle\geq 0\ \mbox{for all}\ \psi\in\cD^\infty(X_0).
\end{gather}
\end{prop}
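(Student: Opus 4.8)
The statement asserts that the single $q$-normal operator $X_0 = U_0C_0$ on $L_2(\dR,d\lambda)$ is ``universal'' for detecting positivity: $f\in\cA_+$ iff $f(X_0,X_0^*)\geq 0$. One direction is trivial, since $X_0$ is a $q$-normal operator and membership in $\cA_+$ requires positivity in \emph{every} $q$-normal operator. The content is the converse: if $f=f^*\in\cA$ satisfies (\ref{eq_pos_stand}), then $\pi(f)\geq 0$ for every well-behaved $*$-representation $\pi$ of $\cA$, equivalently (by Theorem \ref{thm_spec} and the Remark after Definition \ref{defn_wellbeh}) for every $\pi_\mu$ with $\mu$ a positive Borel measure on $\dR_+$ satisfying (\ref{eq_muD}).

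The plan is to reduce the positivity of $\pi_\mu(f)$ to the scalar inequalities (\ref{eq_pos_pmu}) via Lemma \ref{lemma_cyc}, and then to realize each functional $F_\mu(g^*fg)$ as an integral against the spectral data of $X_0$. First I would observe that $X_0$ is genuinely $q$-normal: its polar decomposition is $X_0 = U_0C_0$ with $U_0$ unitary and $C_0 \geq 0$, and the relation $U_0C_0U_0^* = q^{1/2}C_0$ holds since $(U_0C_0U_0^*\psi)(t) = q^{(t+1)/2}\psi(t)$, so Proposition \ref{prop_char}$(iv)$ applies. Next, note that $\cD^\infty(X_0) = \cD^\infty(C_0)$ by Lemma \ref{lemma_dense}$(i)$, and that for $\psi\in\cD^\infty(C_0)$ the quadratic form $\langle f(X_0,X_0^*)\psi,\psi\rangle$ can be written out using (\ref{eq_XnXm}) as an integral over $\dR$ of the form $\int_\dR P_f(t)\,|\text{(shifted combinations of }\psi)|^2\,d\lambda(t)$; more precisely, $\langle \pi_0(g^*fg)\psi,\psi\rangle$ picks out, through the conditional expectation $\gp$, the ``diagonal'' part $\gp(g^*fg) = g^*\gp(f)g$ when $g\in\cB$, giving $\int_\dR (\gp(f))(q^t)\,|g(q^t)|^2\,q^{\cdot}\,|\psi(t)|^2\,d\lambda(t)$ up to bookkeeping of $q$-powers.

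The key point is that by varying $\psi$ over $\cD^\infty(C_0)$ and $g$ over $\cB$, the measures $|g(q^t)|^2|\psi(t)|^2\,d\lambda(t)$ (pushed forward under $t\mapsto q^{t/2}$) are rich enough to approximate — or to directly produce — the measures $d\mu(t^{1/2})$ restricted to $\Delta_q$ for \emph{every} admissible $\mu$, by Lemma \ref{lemma_prop_gp}$(i)$. Concretely, I would argue: fix $\mu$ satisfying (\ref{eq_muD}) and $g\in\cA$; by Lemma \ref{lemma_cyc} it suffices to show $F_\mu(g^*fg)\geq 0$, and by Lemma \ref{lemma_prop_gp} this equals $\int_{\Delta_q}(\gp(g^*fg))(t)\,d\mu(t^{1/2})$, which in turn (after the change of variables $t = q^s$, identifying $\Delta_q$ with a unit-length interval in the $s$-line) is an integral of $(\gp(g^*fg))(q^s)$ against a finite measure on that interval. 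Since the hypothesis (\ref{eq_pos_stand}) applied to test functions $\psi$ supported on translates of that interval forces $(\gp(g^*fg))(q^s)\geq 0$ pointwise a.e. there — equivalently $\gp(g^*fg)$, as a polynomial in $x^*x$, is nonnegative on the relevant spectrum — the integral is $\geq 0$.

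The main obstacle is the bookkeeping that matches the spectral picture of $X_0$ (continuous spectrum, $C_0$ with $\sigma(C_0) = \overline{\dR_+\setminus\{0\}}$, parametrized multiplicatively by $q^{t/2}$, $t\in\dR$) with that of the measure-theoretic operators $X_\mu$ (where $\Delta_q$ serves as a fundamental domain for the $\dZ$-action $t\mapsto q^{1/2}t$), and in particular verifying that the ``diagonal reduction'' via $\gp$ is exactly compatible: that positivity of $\langle f(X_0,X_0^*)\psi,\psi\rangle$ for all $\psi$ localized near a point $s_0\in\dR$ yields nonnegativity of the scalar function $s\mapsto (\gp(f))(q^s)$ at $s_0$, and hence nonnegativity of $(\gp(g^*fg))(q^s) = |g(q^s)|^2(\gp(f))(q^s)$ (using Lemma \ref{lemma_prop_gp}$(ii)$ for $g\in\cB$ and a density/closure argument for general $g\in\cA$, reducing to $\gp$ after conjugation). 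Once that pointwise statement is in hand, the required inequality $F_\mu(g^*fg)\geq 0$ follows by integration, and Lemma \ref{lemma_cyc} completes the proof that $f\in\cA_+$. I expect the non-vanishing-kernel normalization (the case $\mu(\{0\})=0$ versus $\mu(\{0\})\neq 0$) to be a minor point handled exactly as in the proof of Lemma \ref{lemma_cyc}, since $f(0,0)\geq 0$ is already forced by taking $\psi$ concentrated where $C_0$ is small.
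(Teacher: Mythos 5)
Your overall architecture matches the paper's: reduce, via Lemma \ref{lemma_cyc}, to showing $F_\mu(g^*fg)\geq 0$ for all $g\in\cA$ and all admissible $\mu$; rewrite this with Lemma \ref{lemma_prop_gp} as $\int_{\Delta_q}(\gp(g^*fg))(t)\,d\mu(t^{1/2})$; and derive from hypothesis (\ref{eq_pos_stand}) that the integrand is nonnegative on $\Delta_q$ --- a pointwise statement independent of $\mu$, which is exactly why the single operator $X_0$ (whose spectral measure has full support) suffices. Your verification that $X_0$ is $q$-normal via $U_0C_0U_0^*=q^{1/2}C_0$ and Proposition \ref{prop_char}$(iv)$ is fine; the paper instead exhibits a unitary equivalence with $X_{\mu_0}$, where $\mu_0$ extends Lebesgue measure on $\Delta_q$.

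However, the mechanism you give for the crucial pointwise step fails. You propose to first localize $\psi$ to deduce $(\gp(f))(q^{s})\geq 0$ and then pass to $(\gp(g^*fg))(q^s)=|g(q^s)|^2(\gp(f))(q^s)$. The identity $\gp(g^*fg)=g^*\gp(f)g$ holds only for $g\in\cB$ (Lemma \ref{lemma_prop_gp}$(ii)$); for general $g\in\cA$ the element $\gp(g^*fg)$ depends on the off-diagonal coefficients of $f$ and is not determined by $\gp(f)$, so no density or closure argument can rescue the factorization. Concretely, for $f=x+x^*$ one has $\gp(f)=0$, yet $\gp\bigl((\Un+x)^*f(\Un+x)\bigr)=2x^*x\neq 0$; and pointwise nonnegativity of $\gp(f)$ alone does not imply $f\in\cA_+$ (again $f=x+x^*$ is a counterexample, since $\gp(f)=0$ but $f\notin\cA_+$). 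The correct move --- the one the paper makes --- is to fix $g\in\cA$ and vary $h\in\cB$: the hypothesis applied to the vectors $\pi_{\mu_0}(gh)\varphi_0$ gives $0\leq F_{\mu_0}(h^*(g^*fg)h)=\int_{\Delta_q}(\gp(g^*fg))(t)\,|h(t)|^2\,d\mu_0(t^{1/2})$ by Lemma \ref{lemma_prop_gp}, and density of the polynomials $h$ in $L^2(\Delta_q,d\mu_0)$ then yields $(\gp(g^*fg))(t)\geq 0$ on $\Delta_q$ for that fixed $g$ directly, without ever factoring through $\gp(f)$. With that correction (and the $\mu(\{0\})\neq 0$ case routed through Lemma \ref{lemma_cyc}, as you note), your argument closes.
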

\begin{proof}
Let $\mu_0$ be a measure on $\dR_+$ satisfying (\ref{eq_muD}) such that $\mu_0\upharpoonright\Delta_q$ coincides with the Lebesgue measure $d\lambda.$ Then the unitary operator $\cU: L_2(\dR,d\lambda)\to L_2(\Delta_q,d\mu_0),\ (\cU\psi)(t)=\psi(q^{t/2})$ defines a unitary equivalence of $X_0$ and $X_{\mu_0}.$ Hence $X_0$ is $q$-normal and for each $f\in\cA_+$ condition (\ref{eq_pos_stand}) is satisfied.

Conversely, suppose that (\ref{eq_pos_stand}) holds.
Let $\pi_{\mu_0}$ and $F_{\mu_0}$ be the $*$-representation and positive functional defined by (\ref{eq_pimu}) and (\ref{eq_pmu}) respectively. Since (\ref{eq_pos_stand}) holds, we have $F_{\mu_0}(g^*fg)\geq 0$ for all $g\in\cA$ and hence
\begin{align*}
F_{\mu_0}(h^*g^*f g h)\geq 0~~{\rm for ~all}~~ h\in\cB, g\in\cA(q).
\end{align*}
Using Lemma \ref{lemma_prop_gp} we obtain for a fixed $g\in\cA$ and every $h\in\cB$,
\begin{gather*}
F_{\mu_0}(h^*g^*f g h)=p_{\mu_0}(\gp(h^*g^*f g h))=\int_{\Delta_q}h^*(t)(\gp(g^*f g ))(t)h(t) d\lambda(t^{1/2})=\\
=\int_{\Delta_q}(\gp(g^*f g ))(t)|h(t)|^2 d\lambda(t^{1/2})\geq 0.
\end{gather*}
Since the polynomials $h\in \dC[t]$ are dense in $L^2(\Delta_q,d\lambda)$ it follows that
\begin{align*}
(\gp(g^*f g ))(t)\geq 0~~ {\rm for}~~\ t\in\Delta_q, ~ g\in\cA.
\end{align*}

Let $\mu$ be another measure on $\dR_+$ satisfying (\ref{eq_muD}) and $F_\mu$ be the corresponding positive functional. We assume first that $\mu(\set{0})=0.$ Then
\begin{align*}
F_{\mu}(g^*f g)=\int_{\Delta_q}(\gp(g^*f g ))(t)d\lambda(t^{1/2})\geq 0~~ {\rm for~ all}~~ g\in\cA
\end{align*}
which implies that $\pi_\mu(f)\geq 0$.
\end{proof}

\begin{thm}\label{thm_motzkin1}For $c\in \dR$, set $L_c:=(x+x^*)^4-2(x+x^*)^2+c.$ Then:
\begin{enumerate}
\item[$(i)$] $L_c\in \sum \cA^2$ if and only if $c\geq\frac{q (q+1)^2}{\left(q^2+1\right)^2}$,
\item[$(ii)$] there exists $\varepsilon>0$ such that $L:=(x+x^*)^4-2(x+x^*)^2+\frac{q (q+1)^2}{\left(q^2+1\right)^2}-\varepsilon\in\cA_+$.
\end{enumerate}
\end{thm}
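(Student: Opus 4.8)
The plan is to treat the two parts by a reduction to a one-variable problem via the conditional expectation $\gp$ and the standard $q$-normal operator $X_0$ from Proposition \ref{prop_stand_qnorm}. For part $(i)$, I would first apply Lemma \ref{lemma_SDP}: $L_c\in\sum\cA^2$ iff $L_c=w_2^*Cw_2$ for some positive semidefinite $6\times 6$ matrix $C$. Comparing coefficients of $L_c=(x+x^*)^4-2(x+x^*)^2+c$ against $w_2^*Cw_2$ gives a linear system in the entries $c_{ij}$; the crucial feature is that $c$ appears only in the $(1,1)$ entry, $c=c_{11}$. As in the proof of Proposition \ref{prop_sos_deg2}, I would then produce an explicit auxiliary positive semidefinite matrix $\Lambda=[\alpha_{ij}]$ whose entry pattern mirrors the support of the coefficient relations, chosen so that $\tr(\Lambda C)$ collapses (using the relations) to exactly $c-\frac{q(q+1)^2}{(q^2+1)^2}$. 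Nonnegativity of $\tr(\Lambda C)$ for two positive semidefinite matrices then forces $c\ge\frac{q(q+1)^2}{(q^2+1)^2}$. For the converse, it suffices to exhibit, for the boundary value $c_0=\frac{q(q+1)^2}{(q^2+1)^2}$, an explicit sum-of-squares decomposition $L_{c_0}=\sum_i f_i^*f_i$ with $\deg f_i\le 2$ (guided by the rank-one decomposition of the optimal $C$); then for $c>c_0$ one adds $((c-c_0)^{1/2}\Un)^2$.

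The main obstacle in $(i)$ is finding the right auxiliary matrix $\Lambda$ and the right SOS decomposition at the boundary: the degree-four, two-generator case has a $6\times 6$ Gram matrix rather than the $3\times 3$ one of Proposition \ref{prop_sos_deg2}, so the linear algebra is genuinely heavier, and the constant $\frac{q(q+1)^2}{(q^2+1)^2}$ must drop out of the computation rather than be guessed. I expect one can shorten this by noticing that $(x+x^*)^2$ is, up to the relation, controlled by $x^2, x^{*2}$ and $x^*x$, and that the whole problem is invariant under $x\mapsto \bar\zeta x$ for $|\zeta|=1$ only in a graded sense; the honest route is just to solve the semidefinite feasibility problem in closed form in $q$.

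For part $(ii)$, the strategy is to use Proposition \ref{prop_stand_qnorm}: membership in $\cA_+$ is equivalent to positivity of $f(X_0,X_0^*)$ on $\cD^\infty(X_0)$, and by the argument in that proof (via Lemma \ref{lemma_prop_gp} and density of polynomials in $L^2(\Delta_q,d\lambda)$) it suffices to show that $(\gp(g^*L g))(t)\ge 0$ for all $t\in\Delta_q$ and all $g\in\cA$. Equivalently, using Lemma \ref{lemma_cyc} / the structure of $\pi_{\mu_0}$, I would show $\pi_\mu(L)\ge 0$ for all admissible $\mu$, which amounts to checking positivity on each irreducible block: by Proposition \ref{prop_des_irr}, an irreducible $q$-normal operator is determined by a parameter $\lambda\in\Delta_q$ and acts on $\ell^2(\dZ)$ by $Xe_k=\lambda q^{-k/2}e_{k+1}$. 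On such a block, $X+X^*$ is a two-sided weighted shift plus its adjoint — a bounded Jacobi-type operator with weights $\lambda q^{-k/2}$ — and $L$ becomes a concrete bounded self-adjoint operator whose spectrum I can estimate. The key point is that $L_{c_0}=((x+x^*)^2-1)^2 + (c_0-1)$ has the $\sum\cA^2$ part nonnegative and $c_0-1<0$; one must show that in \emph{every} well-behaved representation the negative shift $c_0-1$ is strictly dominated, i.e. $\inf_{\text{spec}} ((X+X^*)^2-1)^2$ stays a fixed positive distance above $1-c_0$... wait, that is false as stated, since $(x+x^*)^2-1$ can be made to vanish; rather, the correct mechanism is that $(x+x^*)^2-1$ cannot be made small \emph{simultaneously with} the second-order terms, so I would instead bound $\langle L(X_0,X_0^*)\psi,\psi\rangle$ from below by a strictly positive quantity using that $X+X^*$ has no point spectrum at $\pm 1$ and a compactness/quantitative argument on each block. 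Concretely: show $\inf_{\lambda\in\Delta_q}\inf_{\|\psi\|=1}\langle L_{c_0}(X_\lambda,X_\lambda^*)\psi,\psi\rangle > 0$, then take $\varepsilon$ to be half this infimum; the infimum over the compact set $\Delta_q$ is attained, and positivity at each $\lambda$ follows because $L_{c_0}(X_\lambda,X_\lambda^*)$, while not invertible in general, is a strictly positive operator off a set of measure...

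Let me restate $(ii)$ more cleanly. The plan is: (a) reduce via Proposition \ref{prop_stand_qnorm} and the $\gp$-argument to showing $(\gp(g^*L_{c_0} g))(t)>0$ for a.e.\ $t$, uniformly enough that a small $\varepsilon$-perturbation preserves nonnegativity; (b) observe that $(\gp(g^* L_{c_0} g))(t)$, as a function of $t\in\Delta_q$, equals $\langle \pi_{\delta_t}(L_{c_0}) \pi_{\delta_t}(g)\varphi_0, \pi_{\delta_t}(g)\varphi_0\rangle$ up to normalization, where $\pi_{\delta_t}$ is the irreducible representation with parameter $t^{1/2}\in\Delta_q$ — so it is controlled by the spectrum of the bounded self-adjoint operator $L_{c_0}(X_t,X_t^*)$ on $\ell^2(\dZ)$; (c) prove that this operator is strictly positive (its spectrum is bounded below by a positive constant depending continuously on $t$) by analyzing $X_t+X_t^*$, whose essential spectrum is computed from the asymptotic weights and whose value $\pm1$ is not an eigenvalue because of the $q^{-k/2}$ growth/decay of weights — so $(X_t+X_t^*)^2$ stays away from $1$ in a quantitative, $t$-uniform way on $\Delta_q$; (d) set $\varepsilon$ equal to half the resulting uniform lower bound. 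The hard part is step (c): making the strict positivity quantitative and uniform over the compact parameter set $\Delta_q$, since $X_t+X_t^*$ is an unbounded-weight Jacobi operator and one needs the spectral gap of $((X_t+X_t^*)^2-1)^2 + (c_0-1)$ at $0$ to be open; I expect this to follow from the fact that $\sum\cA^2 \ni (x+x^*)^4-2(x+x^*)^2+c_0$ forces $\langle \cdot\rangle \ge 0$ everywhere, combined with a separate verification that equality $\langle L_{c_0}(X_t,X_t^*)\psi,\psi\rangle=0$ is impossible for $\|\psi\|=1$, which would contradict the weighted-shift structure.
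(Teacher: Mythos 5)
For part $(i)$ your plan coincides with the paper's proof: Gram-matrix reformulation via Lemma \ref{lemma_SDP}, an auxiliary positive semidefinite matrix traced against $C$ to force $c\geq c_0:=\frac{q(q+1)^2}{(q^2+1)^2}$, and an explicit boundary decomposition plus $((c-c_0)^{1/2}\Un)^2$ for sufficiency. But the proposal stops at the plan: the nine coefficient relations, the explicit $\alpha_i$ (which are nontrivial rational functions of $q$, e.g. $\alpha_7=\frac{q^3(1+q)^2}{(q-1)^2(1+q^2)^2(1+q+q^2)}$), and above all the boundary identity $L_{c_0}=u_1^*u_1^{}+\frac{1+q+q^2}{q}u_2^*u_2^{}$ with explicit degree-two $u_1,u_2$ are the entire content of $(i)$, and none of them is produced. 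This is not a wrong approach, but as written $(i)$ is an announcement rather than a proof; note also that the boundary SOS identity is not merely a convenience here --- it is the key input to the paper's proof of $(ii)$.

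Part $(ii)$ has a genuine gap. Your strategy is to prove a uniform spectral gap for $L_{c_0}(X,X^*)$ over irreducible blocks and set $\varepsilon$ to half of it, but the only mechanism you offer for strict positivity on a block is that $\langle L_{c_0}(X,X^*)\psi,\psi\rangle=0$ is impossible for a unit vector; this does not give a gap, since a positive operator with trivial kernel can still have $0$ in its approximate point spectrum (and the minimum over the non-compact half-open interval $\Delta_q$ is a further unaddressed issue, as is the fact that Theorem \ref{thm_spec} decomposes $X_0$ into cyclic, not irreducible, summands, so your fiberwise reduction is really a direct-integral claim needing proof). The quantitative mechanism you gesture at --- that $(X+X^*)^2$ stays away from $1$ --- is also numerically insufficient: composing the regular-point estimate $\norm{(X+X^*\mp 1)\varphi}\geq\left|\frac{q-1}{q+1}\right|\norm{\varphi}$ gives only $\norm{((X+X^*)^2-1)\varphi}^2\geq\left(\frac{q-1}{q+1}\right)^4\norm{\varphi}^2$, whereas writing $L_{c_0}=((x+x^*)^2-1)^2+(c_0-1)$ shows you need the larger constant $1-c_0=\frac{(q-1)^2(q^2+q+1)}{(q^2+1)^2}$, which dominates $\left(\frac{q-1}{q+1}\right)^4$ near $q=1$. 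The paper's actual argument is quite different: assuming no $\varepsilon$ works, Proposition \ref{prop_stand_qnorm} gives unit vectors $\varphi_n\in\cD^\infty(X_0)$ with $\langle L_{c_0}(X_0,X_0^*)\varphi_n,\varphi_n\rangle\to 0$; the boundary SOS identity then forces $u_1(X_0,X_0^*)\varphi_n\to 0$ and $u_2(X_0,X_0^*)\varphi_n\to 0$ \emph{simultaneously}, and a careful analysis of the polar decomposition of $X_0^2$ (splitting $\varphi_n$ by spectral projections of the phase operator) shows $\varphi_n\to 0$, a contradiction. Your proposal never exploits the two independent degree-two conditions carried by $u_1,u_2$, and without them the claimed uniform lower bound is unsubstantiated.
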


\begin{proof} (i): First suppose that $L_c\in \sum \cA^2$.
By Lemma \ref{lemma_SDP} there exists a positive semidefinite $6\times 6$-matrix $C$ such that $L=w_2^\ast Cw_2^{}$. Comparing coefficients at $x^{*m}x^n$ in the equation $L=w_2^\ast Cw_2^{}$, we obtain the following equations

$$
\begin{array}{rcl}
c & = & c_{1,1},   \\
0 & = & c_{1,2}+c_{3,1}=c_{2,1}+c_{1,3},   \\
-2 & = & c_{1,4}+c_{3,2}+c_{6,1}= c_{4,1}+c_{2,3}+c_{1,6},  \\
-2q-2 & = & c_{1,5}+ c_{2,2}+q c_{3,3}+c_{5,1},  \\
0 & = & c_{2,6}+c_{4,3}= c_{6,2}+c_{3,4},  \\
0 & = & q c_{3,5}+ c_{2,4}+c_{5,2}+q^2 c_{6,3}= q c_{5,3}+ c_{4,2}+c_{2,5}+q^2 c_{3,6},  \\
1 & = & c_{4,6} = c_{6,4},  \\
(1+q)(1+q^2) & = & q^2 c_{5,6}+c_{4,5} = q^2 c_{6,5}+ c_{5,4},   \\
(1+q^2)(1+q+q^2) & = &  c_{4,4}+q c_{5,5}+q^4 c_{6,6},   \\
\end{array}
$$

Multiplying each line with the respective $\alpha_i$,
$$\begin{array}{rcl}
\alpha_1 & = & 1,\\
\alpha_2 & = & 0,\\
\alpha_3 & = & 0,\\
\alpha_4 & = & \frac{q + q^2}{(1 + q^2)^2},\\
\alpha_5 & = & 0,\\
\alpha_6 & = & 0,\\
\alpha_7 & = & \frac{q^3 (1 + q)^2}{(-1 + q)^2 (1 + q^2)^2 (1 + q + q^2)},\\
\alpha_8 & = & -\frac{q^2 (1 + q)^3}{(-1 + q)^2 (1 + q^2)^3 (1 + q + q^2)},\\
\alpha_9 & = & \frac{q (1 + q)^2}{(-1 + q)^2 (1 + q^2)^2 (1 + q + q^2)},\\
\end{array}$$
and adding them we get
$$
c - \frac{q (1 + q)^2}{(1 + q^2)^2}=\mathrm{Tr} \left(
\left[\begin{array}{cccccc}
\alpha_1 & \alpha_2 & \alpha_2 & \alpha_3 & \alpha_4 & \alpha_3 \\
 \alpha_2 & \alpha_4 & \alpha_3 & \alpha_6 & \alpha_6 & \alpha_5 \\
 \alpha_2 & \alpha_3 & q \alpha_4 & \alpha_5 & q \alpha_6 & q^2 \alpha_6 \\
 \alpha_3 & \alpha_6 & \alpha_5 & \alpha_9 & \alpha_8 & \alpha_7 \\
 \alpha_4 & \alpha_6 & q \alpha_6 & \alpha_8 & q \alpha_9 & q^2 \alpha_8 \\
 \alpha_3 & \alpha_5 & q^2 \alpha_6 & \alpha_7 & q^2 \alpha_8 & q^4\alpha_9
\end{array}\right]C\right)
$$
By some simple computations one checks that the matrix containing the $\alpha_i$ is positive semi\-de\-fi\-nite. Since $C$ is also positive semidefinite, it follows from the preceding that
\begin{align}\label{inequl}
c \geq \frac{q (1 + q)^2}{(1 + q^2)^2}.
\end{align}

Conversely, suppose that (\ref{inequl}) is satisfied. Setting
$$u_1(x,x^*) = -\frac{q}{1 + q^2} + \frac{x^{*2}}{q (1 + q)} + \frac{(1 + q^2) x^*x}{1 + q} + \frac{q^2 x^2}{1 + q}$$
and
$$u_2(x,x^*) = -\frac{q}{1 + q^2} + \frac{x^{*2}}{1 + q} + \frac{(1 + q^2) x^*x}{1 + q} + \frac{q x^2}{1 + q},$$
we compute
\begin{gather}\label{eq_L_sos}
(x+x^*)^4-2(x+x^*)^2+\frac{q (1 + q)^2}{(1 + q^2)^2}= u_1^\ast u_1^{} + \frac{1 + q + q^2}{q} u_2^\ast u_2^{}.
\end{gather}
This implies that $L\in \sum \cA^2$ if (\ref{inequl}) holds.

\medskip
\noindent$(ii)$: Since the element $L$ remains invariant if we replace $x$ by $x^*$ and $q$ by $q^{-1}, $ it suffices to treat the case $q>1.$ Assume to the contrary that no such $\varepsilon>0$ exists. Let $X_0$ be the $q$-normal operator from Proposition \ref{prop_stand_qnorm}. Then there exists a sequence of unit vectors $\varphi_n\in\cD^{\infty}(X_0)$ such that $\langle L(X_0^{},X_0^*)\varphi_n,\varphi_n\rangle\to 0$ as $n\to\infty.$ It follows from (\ref{eq_L_sos}) that
\begin{align}
u_1(X_0^{},X_0^*)\varphi_n\to 0\ \mbox{and}\ u_2(X_0^{},X_0^*)\varphi_n\to 0\ \mbox{as}\ n\to\infty.
\end{align}

Put $X:=X_0^2.$ Then $X$ is a $q^4$-normal operator, $X^*=X_0^{*2}$ and $X_0^*X_0=(qX_0^{*2}X_0^2)^{1/2}=q^{1/2}|X|.$ Define operators
\begin{gather*}
F_1=\frac{q(q+1)}{(q-1)}(u_1(X_0^{},X_0^*)-u_2(X_0^{},X_0^*))=X_0^{*2}-q^2X_0^2=X^*-q^2X,\\ 
\nonumber F_2=\frac{1}{q-1}(q u_1(X_0^{},X_0^*)-u_2(X_0^{},X_0^*))=qX+\frac{1+q^2}{1+q}q^{1/2}|X|-\frac{q}{1+q^2}.
\end{gather*}
Then $F_1\varphi_n\to 0$ and $F_2\varphi_n\to 0$. Let $X=UC$ be the polar decomposition of $X.$ From Proposition \ref{prop_char} we get $X^*-q^2X=q^2U^*(I-U^2)C$ which implies that
\begin{align}\label{eq_c1}
(I-U^2)C\varphi_n\to 0.
\end{align}
Put $\alpha=q,\ \beta=q^{1/2}\frac{1+q^2}{1+q},\ \gamma=-\frac{q}{1+q^2},$ so that $F_2=\alpha UC+\beta C+\gamma.$ Then $(I-U^2)F_2\varphi_n\to 0.$ Combined with (\ref{eq_c1}) it follows that
\begin{align}\label{eq_c2}
(I-U^2)\varphi_n\to 0.
\end{align}
Let $\dT_+=\set{e^{\img t},\ t\in[-\pi/2,\pi/2)},\ \dT_-=\set{e^{\img t},\ t\in[\pi/2,3\pi/2)}.$ We put $\xi_n=E_U(\dT_+)\varphi_n,\ \psi_n=E_U(\dT_-)\varphi_n.$ Then $\varphi_n=\psi_n+\xi_n$ and (\ref{eq_c2}) yields
\begin{gather}\label{eq_c3}
(U-I)\xi_n\to 0~~{\rm and}~~ (U+I)\psi_n\to 0.
\end{gather}
Since $C$ is positive, $C+1$ is invertible, so that $(C+1)^{-1}F_2\varphi_n\to 0.$ By $UC=q^2CU,$
$$
(C+1)^{-1}(\alpha q^2CU+\beta C+\gamma)\varphi_n\to 0.
$$
Using (\ref{eq_c3}) we obtain
$$
\frac{\alpha q^2C}{C+1}(\xi_n-\psi_n)+\frac{\beta C+\gamma}{C+1}(\xi_n+\psi_n)\to 0
$$
which implies that
$$
\frac{(\alpha q^2+\beta)C+\gamma}{C+1}\xi_n-\frac{(\alpha q^2-\beta)C-\gamma}{C+1}\psi_n\to 0.
$$
Since $q>1$, $\alpha q^2-\beta>0$ and $\gamma<0$. Hence the operator $(\alpha q^2-\beta)C-\gamma$ has a bounded inverse. Applying this inverse to the preceding equation we get
\begin{gather}\label{eq_c5}
-\psi_n+\frac{(\alpha q^2+\beta)C+\gamma}{(\alpha q^2-\beta)C-\gamma}\xi_n\to 0.
\end{gather}
Applying $U$ and using Proposition \ref{prop_char} and (\ref{eq_c3}) we derive
\begin{gather}\label{eq_c6}
\psi_n+\frac{(\alpha q^2+\beta)q^2C+\gamma}{(\alpha q^2-\beta)q^2C-\gamma}\xi_n\to 0.
\end{gather}
Adding (\ref{eq_c5}) and (\ref{eq_c6}) we obtain
\begin{gather}\label{eq_c7}
\frac{\alpha_1 C^2+\beta_1 C+\gamma_1}{((\alpha q^2-\beta)q^2C-\gamma)((\alpha q^2-\beta)C-\gamma)}\xi_n\to 0,\ n\to\infty,
\end{gather}
where $\alpha_1=2q^2(\alpha^2q^4-\beta^2),\ \beta_1=-2\beta\gamma(1+q^2),\ \gamma_1=-2\gamma^2.$ The polynomial $\alpha_1 c^2+\beta_1 c+\gamma_1$ has two real roots $c_1<0<c_2.$

Since $X=(U_0C_0)^2=q^{-1/2}U_0^2C_0^2,$ we get $U=U_0^2,\ C=q^{-1/2}C_0^2.$ By (\ref{eq_qnorm_st}),
\begin{align*}
(U\varphi)(t)=\varphi(t+2),~(C\psi)(t)=q^{t-1/2}\psi(t)~~{\rm for}~~
\varphi\in L_2(\dR,d\lambda), \psi\in\cD(C_0^2).
\end{align*}
Set $t_2=\log_{q}c_2+1/2.$ Then (\ref{eq_c7}) implies that
$$\norm{\xi_n}^2-\int_{t_2-1/2}^{t_2+1/2}|\xi_n|^2d t\to 0.$$
Hence $\langle U\xi_n,\xi_n\rangle=\int_{\dR}\xi_n(t+2)\overline{\xi_n(t)}d t\to 0.$ Combined with (\ref{eq_c3}) this yields $\xi_n\to 0.$ Further, (\ref{eq_c5}) implies $\psi_n\to 0$ which contradicts $\norm{\xi_n+\psi_n}=\norm{\varphi_n}=1.$
\end{proof}

We end up the section with the following
\begin{prop}
Let $f\in\dR[t]$. If $f(x^*x)\in\cA(q)_+,$ then $f(x^*x)\in\sum\cA(q)^2.$
\end{prop}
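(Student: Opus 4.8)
The plan is to collapse the operator-theoretic positivity hypothesis to a scalar inequality on the half-line, then apply the classical one-variable Positivstellensatz, and finally lift the resulting identity back into $\cA$ using that $x^*x$ commutes with every polynomial in itself.

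First I would show that $f(x^*x)\in\cA_+$ forces $f(s)\ge 0$ for all $s\in[0,\infty)$. For this I test against the standard $q$-normal operator $X_0=U_0C_0$ of Proposition \ref{prop_stand_qnorm} on $L_2(\dR,d\lambda)$: since $U_0$ is unitary, $X_0^*X_0=C_0^2$, so $f(x^*x)$ acts in this representation as multiplication by the function $t\mapsto f(q^t)$ on the domain $\cD^\infty(X_0)=\cD^\infty(C_0)$. That domain contains every compactly supported $L_2$-function, so if $f(q^{t_0})<0$ for some $t_0$, then $f(q^t)<0$ on an interval $I$ of positive Lebesgue measure and $\langle f(X_0,X_0^*)\One_{I},\One_{I}\rangle=\int_{I}f(q^t)\,d\lambda(t)<0$, contradicting Proposition \ref{prop_stand_qnorm}. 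Hence $f(q^t)\ge 0$ for all $t\in\dR$, i.e. $f\ge 0$ on $(0,\infty)$, and continuity extends this to $s=0$. (One could argue equally well with the models $X_\mu$ of Theorem \ref{thm_spec} for a measure $\mu$ of full support, but the standard model is the most economical.)

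Next I would invoke the classical fact that a real univariate polynomial that is nonnegative on $[0,\infty)$ can be written as $f(t)=\sigma_0(t)+t\,\sigma_1(t)$ with $\sigma_0,\sigma_1$ sums of squares in $\dR[t]$ (see e.g. \cite{m}). Writing $\sigma_0=\sum_i g_i^2$ and $\sigma_1=\sum_j h_j^2$ with $g_i,h_j\in\dR[t]$ and substituting $t\mapsto x^*x$, I would then observe that each $g_i(x^*x)$ is a Hermitian element of $\cB=\dC[x^*x]$ (real coefficients and $(x^*x)^*=x^*x$), so $g_i(x^*x)^2=g_i(x^*x)^*g_i(x^*x)\in\sum\cA^2$; and since $x^*x$ commutes with $h_j(x^*x)$ one has $(x^*x)\,h_j(x^*x)^2=\bigl(x\,h_j(x^*x)\bigr)^*\bigl(x\,h_j(x^*x)\bigr)\in\sum\cA^2$. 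Summing these identities gives $f(x^*x)=\sum_i g_i(x^*x)^2+\sum_j (x^*x)\,h_j(x^*x)^2\in\sum\cA^2$, which is the claim.

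The only step carrying real content is the first, identifying membership in $\cA_+$ with the pointwise inequality $f\ge 0$ on $[0,\infty)$; given Proposition \ref{prop_stand_qnorm} and Lemma \ref{lemma_dense}$(i)$ this is routine. The second step is a black box from one-variable real algebra, and the last is purely formal, the crucial point being only the commutativity of $x^*x$ with every polynomial in itself.
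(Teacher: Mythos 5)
Your proposal is correct and follows essentially the same route as the paper: reduce to the scalar inequality $f\geq 0$ on $\dR_+$ by evaluating in a well-behaved representation where $x^*x$ acts as a multiplication operator with full spectrum, invoke the classical representation $f=\sigma_0+t\sigma_1$ for polynomials nonnegative on $[0,\infty)$, and substitute $t\mapsto x^*x$. The only cosmetic differences are your choice of the standard model $X_0$ (the paper uses $X_\mu$ with $\supp\mu=\dR_+$) and your writing the $t\sigma_1$ term directly as $\bigl(x\,h_j(x^*x)\bigr)^*\bigl(x\,h_j(x^*x)\bigr)$ where the paper commutes $g_2$ past $x$ via $xg(x^*x)=g(qx^*x)x$.
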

\begin{proof}
Let us choose a measure $\mu$ satisfying (\ref{eq_muD}) such that $\supp~\mu=\dR_+$ and let $X_\mu$ be the operator defined by (\ref{eq_Amu}). Then the spectrum of $X_\mu^*X_\mu^{}$ is equal to $\dR_+.$ Therefore, since $f(X_\mu^*X_\mu^{})\geq 0,$ we have $f\geq 0$ on $\dR_+.$ Hence (see e.g. \cite{m}) there exist polynomials $g_1,g_2\in\dC[t]$ such that $f(t)=g_1(t)^*g_1(t)+t\cdot g_2(t)^*g_2(t).$ Then
\begin{gather*}
f(x^*x)=g_1(x^*x)^*g_1(x^*x)+x^*x\cdot g_2(x^*x)^*g_2(x^*x)=\\
=g_1(x^*x)^*g_1(x^*x)+x^*\cdot g_2(qx^*x)^*g_2(qx^*x)\cdot x\in\sum\cA(q)^2.
\end{gather*}
\end{proof}

\section{The complex $q$-moments problem and formally $q$-normal operators}\label{q-momentproblem}
\begin{defn}
A linear functional $F$ on $\cA$ is called a \textit{$q$-moment functional} if there exists a well-behaved $\ast$-representation $\pi$ of $\cA$ and a vector $\xi \in \cD(\pi)$ such that
\begin{gather}
F(a)=\langle \pi(a)\xi,\xi\rangle ~~{\rm for~all}~~ a\in \cA.
\end{gather}
\end{defn}
\noindent Then the \textit{$q$-moment problem} asks:
\begin{center}
 \textit{When is a given functional $F$ on $\cA$ a $q$-moment functional?}
 \end{center}
In this formulation the $q$-moment problem is a \textit{generalized moment problem} in the sense of \cite{s5}. Next we give two reformulations of the $q$-moment problem.

Since $\{x^{*k}x^l;k,l\in \dN_0\}$ is a vector space basis of $\cA$, there is a one-to-one-correspondence between complex $2$-sequences and linear functionals on $\cA$ given by $F_a(x^{*k}x^l)=a_{kl}$, $k,l\in \dN_0$, where $a=(a_{kl})_{k,l\in\dN_0}$ is a $2$-sequence.
The definition of a well-hehaved representation (Definition \ref{defn_wellbeh})
yields the following equivalent formulation of the $q$-moment problem:
\begin{center}
\textit{Given a $2$-sequence $(a_{kl})_{k,l\in\dN_0},$ does there exist a $q$-normal operator $X$ and a vector $\xi\in\cD^{\infty}(X)$ such that}
\begin{gather}\label{eq_mom_seq}
a_{kl}=\langle X^{*k}X^l\xi,\xi\rangle\ \mbox{for all}\ \ k,l\in\dN_0?
\end{gather}
\end{center}
Before we turn to the second reformulation we consider an example.

\medskip
\noindent\textbf{Example.} Suppose that $\mu$ is a positive Borel measure on $\Delta_q$. Denote by $\muw$ the unique extension of $\mu$ to a measure on $\dR_+$ satisfying (\ref{eq_muD}). Let $X_{\mu}$ be the $q$-normal operator defined by (\ref{eq_Amu}) and $\xi\in\cD^\infty(X_\mu)$. Then there is a $q$-moment functional defined by $F_{\mu,\xi}(f(x,x^*)):=\langle f(X_\mu,X_\mu^*)\xi,\xi\rangle.$ Since
\begin{gather*}
(X_\mu^{*k}X_\mu^l\xi)(t)=q^{(l^2+l-k^2+k-2kl)/4}t^{k+l}\xi(q^{(l-k)/2}t),\ k,l\in\dN_0,
\end{gather*}
by Lemma \ref{lemma_dense}, the corresponding $q$-moments are
\begin{gather}
a_{kl}=F_{\mu,\xi}(x^{*k}x^l)=\langle X_\mu^{*k}X_\mu^l\xi,\xi\rangle=\int_{\dR_+}q^{(l^2+l-k^2+k-2kl)/4}t^{k+l}\xi(q^{(l-k)/2}t)\overline{\xi(t)}d\mu(t)\nonumber\\
\label{muqmoments}=q^{(l^2+l-k^2+k-2kl)/4}\int_{\dR_+}(q^{k/2}t)^{k+l}\xi(q^{l/2}t)\overline{\xi(q^{k/2}t)}d\tilde{\mu}(q^{k/2}t)=\\
\nonumber=q^{(l^2+l+k^2+k)/4}\int_{\dR_+}t^{k+l}\xi(q^{l/2}t)\overline{\xi(q^{k/2}t)}d\tilde{\mu}(t).
\end{gather}

Using Theorem \ref{thm_spec} and formula (\ref{muqmoments}) we obtain another equivalent formulation of the $q$-moment problem in terms of measures and integrals:
\begin{center}
\textit{Given a $2$-sequence $(a_{kl})_{k,l\in\dN_0},$ does there exist a family $\mu_i,i\in I$, of positive Borel measures on $\Delta_q$ and a vector $\xi=(\xi_i)\in \bigoplus_i\cD^\infty(X_{\mu_i})$ in the Hilbert space $\bigoplus_i L^2(\dR_+,\muw_i)$ such that}
\begin{gather*}
a_{kl}=\sum_i q^{(l^2+l+k^2+k)/4}\int_{\dR_+}t^{k+l}\xi_i(q^{l/2}t)\overline{\xi_i(q^{k/2}t)}d\muw_i(t)\ \mbox{for}\ \ k,l\in\dN_0?
\end{gather*}
\end{center}

The next theorem is the counter-part of Haviland's theorem from the classical moment problem. For this we need the following
\begin{defn}
A linear functional $F$ on $\cA$ is said to be \textit{positive} if $F(a^\ast a)\geq 0$ for all $a\in \cA$ and \textit{it strongly positive} if $F(a)\geq 0$ for all $a\in\cA_+.$
\end{defn}
Each strongly positive functional is positive, but Proposition \ref{notmomentfunctional} below shows that the converse is not true.
\begin{thm}\label{thm_hav}
A linear functional $F$ on $\cA$ is a $q$-moment functional if and only if $F$ is strongly positive.
\end{thm}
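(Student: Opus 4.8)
The plan is to prove both implications of Theorem~\ref{thm_hav}, with the nontrivial direction being that strong positivity implies representability.

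\textbf{The easy direction.} Suppose $F$ is a $q$-moment functional, so $F(a)=\langle\pi(a)\xi,\xi\rangle$ for some well-behaved $\ast$-representation $\pi$ and $\xi\in\cD(\pi)$. If $a\in\cA_+$, then by definition $\pi(a)\geq 0$, hence $F(a)=\langle\pi(a)\xi,\xi\rangle\geq 0$. Thus $F$ is strongly positive.

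\textbf{The hard direction.} Assume $F$ is strongly positive. First I would observe that since $\sum\cA^2\subseteq\cA_+$, the functional $F$ is in particular positive, so it has a GNS representation: there is a $\ast$-representation $\rho:=\pi_F$ on a domain $\cD(\rho)\subseteq\cH(\rho)$ with cyclic vector $\xi$ such that $F(a)=\langle\rho(a)\xi,\xi\rangle$. The task is to replace $\rho$ by a well-behaved representation having the same value on $\xi$; equivalently, to show $\rho$ (or a closed extension / restriction of it compatible with $\xi$) is well-behaved, i.e. $\rho(x)$ extends to a $q$-normal operator $X$ with $\cD(\rho)\subseteq\cD^\infty(X)$ and $\xi$ still in the domain. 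The key point is that strong positivity, applied to the elements exhibited in Section~\ref{positive}, forces good behavior of $\rho(x)$. Concretely, let $Y:=\rho(x)$ on $\cD(\rho)$ and set $A:=Y+Y^\ast\upharpoonright\cD(\rho)$, a symmetric operator. By Proposition~\ref{prop_sos_deg2} (equation~(\ref{eq_c4})), for every $a\in\dR\setminus\{0\}$ the element $((x+x^*)-a)^2-a^2\big(\tfrac{q-1}{q+1}\big)^2\in\sum\cA^2\subseteq\cA_+$, so $F\big(g^*((x+x^*-a)^2)g\big)\geq a^2(\tfrac{q-1}{q+1})^2 F(g^*g)$ for all $g\in\cA$; applied in the GNS space this gives $\norm{(A-a)\rho(g)\xi}\geq |a|\,|\tfrac{q-1}{q+1}|\,\norm{\rho(g)\xi}$ on the cyclic (hence core) domain, so every nonzero real is a regular point of $\ov A$, forcing $\sigma(\ov A)$ away from $0$ unless $A$ already behaves. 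This kind of estimate, together with the analogous deficiency-index / defect analysis for the operator $x^*x$ (whose positive polynomials are sums of squares, shown at the end of Section~\ref{positive}), should be used to pin down that $Y$ has a $q$-normal closure. The cleanest route is probably: show strong positivity implies that on the GNS domain the relation $YY^*=qY^*Y$ holds in the strong operator sense on a core, and that $Y$ is ``essentially $q$-normal'' — i.e. its closure, after passing to the natural self-adjoint extension of $C:=|Y|$ dictated by positivity of $p(x^*x)$, is $q$-normal. Then Theorem~\ref{thm_spec} provides the structure of $X:=\ov Y$ as a direct sum of the $X_{\mu_i}$, the domain $\cD^\infty(X)$ is a core by Lemma~\ref{lemma_dense}, and restricting gives a well-behaved $\pi$ with $F(a)=\langle\pi(a)\xi,\xi\rangle$.

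\textbf{Main obstacle.} The real difficulty is passing from the abstract GNS representation, where $\rho(x)$ is merely a symmetric-in-the-sense-of-$\ast$ operator on a possibly small domain, to an honest $q$-normal operator: one must show that strong positivity (as opposed to mere positivity, which by Theorems~\ref{notmomentfunctional}/\ref{prop_noext} is genuinely insufficient) rules out the ``bad'' non-$q$-normal extensions. The leverage is precisely the two families of certificates from Section~\ref{positive}: the degree-four Motzkin-type element $L$ of Theorem~\ref{thm_motzkin1}(ii), or more usefully the sums-of-squares identities (\ref{eq_c4}) and (\ref{eq_L_sos}) together with ``$p(x^*x)\in\cA_+\Rightarrow p(x^*x)\in\sum\cA^2$'', which let $F$ control $\norm{\rho(g_1)\xi}$ versus $\norm{\rho(g_2)\xi}$ for monomials and thereby reconstruct the polar decomposition $X=UC$ with the commutation $UC=q^{1/2}CU$ of Proposition~\ref{prop_char}(iv). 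Verifying that these estimates suffice to force $\cD(\ov Y)=\cD(\ov Y^*)$ and the $q$-normal identity — rather than just some symmetric or formally $q$-normal operator — is the crux; once $X$ is known to be $q$-normal, the rest (core property, direct-integral form, restriction to $\cD^\infty(X)$) is routine from Lemma~\ref{lemma_dense} and Theorem~\ref{thm_spec}.
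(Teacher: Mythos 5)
Your easy direction is fine, but the hard direction has a genuine gap, and the gap is precisely at the point you yourself flag as ``the crux.'' The certificates you propose to extract from strong positivity --- the identities (\ref{eq_c4}) and (\ref{eq_L_sos}) and the fact that $p(x^*x)\in\cA_+$ implies $p(x^*x)\in\sum\cA^2$ --- are all sums-of-squares identities, so the inequalities they yield, $F(g^*fg)\geq 0$ for $f\in\sum\cA^2$, hold for \emph{every} positive functional, not just strongly positive ones. They therefore cannot supply the leverage that separates the two notions; if they did, every positive functional would be a $q$-moment functional, contradicting Theorem~\ref{notmomentfunctional}. More fundamentally, the route ``show the GNS operator $\rho(x)$ is essentially $q$-normal in $\cH(\rho)$'' cannot work as stated: Theorem~\ref{prop_noext} exhibits a GNS representation in which $\pi_F(x)$ is formally $q$-normal with no $q$-normal extension in any larger space, and even in the classical case $q=1$ Haviland's theorem is not proved by showing the GNS multiplication operators are essentially normal (in general they are not). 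A $q$-moment functional may require a $q$-normal operator acting on a Hilbert space strictly larger than the GNS space, so a ``closure/defect analysis'' of $\rho(x)$ inside $\cH(\rho)$ is the wrong target.

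The missing idea, which is how the paper actually uses strong positivity, is an extension of the functional rather than of the operator. The paper embeds $\cA$ into a larger $*$-algebra $\gX$ generated by a unitary $u$ and polynomially bounded Borel functions on $\dR_+$, passes to the subalgebra $\cY=\cA+\gX_b$ ($\gX_b$ built from compactly supported functions), observes that $\cA_+$ is cofinal in $\cY_+$ so that the strongly positive $F$ extends to a $\cY_+$-positive functional on $\cY$, and then takes the GNS representation of the \emph{extended} functional. The bounded elements $\One_\Delta\in\cY$ produce a spectral measure $E$ satisfying $UE(\Delta)U^*=E(q^{-1/2}\Delta)$, from which the $q$-normal operator $X=UC$ is built directly via Proposition~\ref{prop_char}; the remaining work is an $\varepsilon$-cutoff estimate showing $\pi_F(x)\subseteq X$. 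Strong positivity enters exactly once --- in the cofinality/extension step --- and nowhere else; this is the step absent from your outline, and without it the argument does not close.
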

\begin{proof}
From the definition of the cone $\cA_+$ (Definition \ref{defn_pos}) it is obvious that $q$-moment functionals are strongly positive.

Suppose that $F$ is strongly positive. To prove that $F$ is a $q$-moment functional we need some preparations. First we define some auxiliary algebras.

Let $\cF$ be the $*$-algebra of all Borel functions $f(t)$ on $\dR_+$ which are polynomially bounded (that is, there exists a polynomial $p\in\dC[t]$ such that $|f(t)|\leq p(t)$ for $t\in \dR_+$). We denote by $\gX$ the $*$-algebra generated by an element $u$ and the $*$-algebra $\cF$ with defining relations
\begin{gather}\label{eq_gX_rel}
u^*u=uu^*=1,\ u f(t)=f(q^{1/2}t),\ f(t)u^*=f(q^{1/2}t),
\end{gather}
for $f\in\cF.$ Clearly, $\gX$ has a vector space basis $\{x^nc^{k};\ k\in\dN_0,\ n\in\dZ\},$ where $c^2=x^*x$ and $x^{-n}:=x^{*n}$ for $n<0,\ n\in\dZ.$ Hence there is an injective $*$-homomorphism $J$ of $\cA$ into $\gX$ given by $J(x)=uf_0,$ where $f_0(t)=t.$ We identify $J(a)$ and $a$ for $a\in\cA$ and consider $\cA$ as a $*$-subalgebra of $\gX.$ With a slight abuse of notation we shall write $x=ut$, where $t$ means the function $f_0(t)=t$ on $\dR_+$.

Let $\mu$ be a Borel measure on $\dR_+$ satisfying (\ref{eq_muD}). Then
\begin{gather*}
(\pi_\mu(f)\varphi)(t)=f(q^{1/2}t)\varphi(q^{1/2}t),\ (u\varphi)(t)=\varphi(q^{1/2}t),\\
\varphi\in\cD(\pi_\mu)=\set{\varphi\in L^2(\dR_+,\mu)|\ t^n\varphi\in L^2(\dR_+,\mu)\ \mbox{for all}\ n\in\dN},
\end{gather*}
defines a $*$-representation of $\gX$ on $\cH=L^2(\dR_+,d\mu)$ and $\overline{\pi_\mu(ut)}=X_\mu$ is the $q$-normal operator given by (\ref{eq_Amu}). Setting
\begin{gather*}
\gX_+:=\set{x\in\gX|\ \pi_\mu(x)\geq 0\ \mbox{for all measures }\ \mu\ \mbox{satisfying}\ (\ref{eq_muD})},
\end{gather*}
we clearly have $\cA_+=\gX_+\cap\cA.$

Let $\gX_b$ be the $*$-subalgebra of $\gX$ generated by $u$ and the subset $\cF_b$ of all $f\in \cF$ of compact support and consider the $*$-subalgebra $\cY=\cA+\gX_b$ of $\gX.$ Clearly, $\cA_+$ is cofinal in $\cY_+:=\cY\cap\cX,$ that is, for each $y\in\cY_+$ there exists $a\in\cA_+$ such that $a-y\in\cY_+.$ Therefore, since $\cA_+=\gX_+\cap\cA$, $F$ extends to a linear functional, denoted again by $F,$ such that $F(y)\geq 0$ for all $y\in\cY_+.$ Let $\pi_F$ denote the $*$-representation of $\cY$ with cyclic vector $\varphi$ obtained by the GNS construction from the functional $F$ (see e.g. \cite{s4}, Section 8.6). Then, by the GNS-construction,
\begin{gather}\label{gns}
F(y)=\langle\pi_F(y)\varphi,\varphi\rangle\ \mbox{for}\ y\in\cY.
\end{gather}

Let $\One_\Delta$ be the characteristic function of a Borel subset $\Delta\subseteq\dR_+$ and define $(E(\Delta)f)(t)=\overline{\pi_F(\One_\Delta)}f(t),\ f\in\cH(\pi_F).$ Then $E$ defines a spectral measure on $\dR_+.$ Let $U=\ov{\pi_F(u)}.$ From (\ref{eq_gX_rel}) it follows that $UE(\Delta)U^*=E(q^{-1/2}\Delta).$ Let $C=\int_0^\infty\lambda dE(\lambda).$ Then $X:=UC$ is a $q$-normal operator on $\cH(\pi_F)$ by Proposition \ref{prop_char}.
The proof is complete once we have shown that $\pi_F(x)\subseteq X$, or equivalently,
\begin{gather}\label{eq_b0}
\pi_F(a)\varphi\in\cD(X)~~{\rm and}~~ \pi_F(x)\pi_F(a)\varphi=X\pi_F(a)\varphi~~{\rm for}~~a\in \cA.
\end{gather}
Indeed, because $X$ is $q$-normal, by Lemma \ref{lemma_dense} and Definition \ref{defn_wellbeh} there is a well-behaved $\ast$-representation $\pi$ of $\cA$ on $\cD(\pi):=\cap_n \cD(X^n)$ such that $\pi(x)=X\lceil \cD(\pi).$ The relation $\pi_F(x)\subseteq X$ implies that $\pi_F\subseteq \pi$. Therefore, by (\ref{gns}), $F(a)=\langle\pi_F(a)\varphi,\varphi\rangle=\langle\pi(a)\varphi,\varphi\rangle$ for $a\in\cA,$ so $F$ is a $q$-moment functional.

 Let $f\in\cF_b$ and $k\in \dZ$. Then the operator $\pi_F(f)$ is bounded and we have $f(C)=\int_0^\infty f(\lambda)d E(\lambda)=\ov{\pi_F(f)}$ by the spectral calculus. Therefore, since $u^kf(t)=f(q^{k/2}t)u^k$ by (\ref{eq_gX_rel}), we have
\begin{align}\label{eq_b1}
C\pi_F(u^kf(t))& =C\pi_F(f(g^{k/2}t))\pi_F(u^k)=Cf(g^{k/2}t)\pi_F(u^k)\nonumber\\ &=\pi_F(tf(g^{k/2}t))\pi_F(u^k)=\pi_F(tu^kf(t)).
\end{align}
We prove (\ref{eq_b0}) for $a=u^{\tau n}t^n,$ where $\tau=\pm 1$ and $n\in\dN_0$. Let $\vre>0$ be fixed. We choose $\alpha_\vre >0$ such that $t^{2n}\leq \vre (1+t^{2n+2})$ for $t>\alpha_\vre$ and denote the characteristic function of the interval $[0,\alpha_\vre]$ by $\One_\vre.$ Setting $g_\vre(t)=\One_\vre(t)t^n$ and $f_\vre(t)=t^n-g_\vre(t),$ we have $g_\vre\in\cF_b$ and $f_\vre(t)^2\leq\vre(1+t^{2n+2})$ for $t\in\dR_+.$
Hence
\begin{gather}\label{eq_b2}
\vre(t^2+t^{2n+4})-t^2f_\vre(t)^2\in\cY_+,~~~\vre(1+t^{2n+2})-f_\vre(t)^2\in\cY_+.
\end{gather}
Now we compute
\begin{gather}\label{est1}
\nonumber\|\pi_F(x)\pi_F(u^{\tau n}t^n)\varphi-X \pi_F(u^{\tau n}g_\vre)\varphi\|^2=\|\pi_F(ut)\pi_F(u^{\tau n}t^n)\varphi -\pi_F(u)C \pi_F(u^{\tau n}g_\vre)\varphi\|^2=\\
\nonumber=\|\pi_F(u)(\pi_F(tu^{\tau n}t^n)\varphi -\pi_F(tu^{\tau n}g_\vre)) \varphi\|^2=\|\pi_F(q^{-n/2}u^{\tau(n+1)})t(t^n- g_\vre(t)))\varphi\|^2=\\
=q^{-n}\norm{\pi_F(t f_\vre(t))\varphi}^2=q^{-n}\langle \pi_F(t^2f_\vre(t)^2)\varphi,\varphi\rangle = q^{-n}F(t^2 f_\vre(t)^2) \leq\vre q^{-n}F(t^2+t^{2n+4}).
\end{gather}
Here we used first equations (\ref{eq_b1}) and (\ref{eq_gX_rel}), then the fact that $\pi_F(u^{\tau (n+1)})$ preserves the norm and equation (\ref{gns}) for $y=t^2f_\vre(t)^2$. Since $F$ is $\cY_+$-positive, we have $F(\vre(t^2+t^{2n+4})-t^2f_\vre(t)^2)\geq 0$ by (\ref{eq_b2}) which gives the inequality in the last line.

Using now the fact that $\vre(1+t^{2n+2})-f_\vre(t)^2\in\cY_+$ by (\ref{eq_b1}) we derive
\begin{gather}\label{est2}
\|\pi_F(u^{\tau n}t^n)\varphi-\pi_F(u^{\tau n}g_\vre)\varphi\|^2 =
\|\pi_F(u^{\tau n}f_\vre)\varphi\|^2\nonumber\\  =\|\pi_F(f_\vre(t)\varphi\|^2= F(f_\vre(t)^2) \leq\vre F(1+t^{2n+2}).
\end{gather}
Letting $\vre\to 0$, (\ref{est1}) and (\ref{est2}) imply that
$$
\pi_F(u^{\tau n}g_\vre )\varphi\to\pi_F(u^{\tau n} t^n)\varphi\ \ \mbox{and}\ \ X\pi_F(u^{\tau n}g_\vre)\varphi\to\pi_F(x)\pi_F(u^{\tau n} t^n)\varphi.
$$
Therefore, since $X$ is closed, we have $\pi_F(x)\pi_F(u^{\tau nt^n})\varphi\in \cD(X)$ and $\pi_F(x)\pi_F(u^{\tau nt^n})\varphi=X\pi_F(u^{\tau n}t^n)\varphi.$ This proves (\ref{eq_b0}) for $a=u^{\tau n}t^n$. Since these elements span $\cA$, (\ref{eq_b0}) holds for all $a\in \cA$ which completes the proof.
\end{proof}

\begin{thm}\label{notmomentfunctional}
There exists a positive linear functional on $\cA$ which is not a $q$-moment functional.
\end{thm}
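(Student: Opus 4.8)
The plan is to exhibit an explicit positive linear functional $F$ on $\cA$ and show it fails to be a $q$-moment functional by contradiction, using Theorem \ref{thm_hav}: a $q$-moment functional must be strongly positive, so it suffices to find a positive $F$ together with an element $a\in\cA_+$ with $F(a)<0$. The natural candidate for the obstruction element is the degree-four polynomial from Theorem \ref{thm_motzkin1}$(ii)$, namely $L=(x+x^*)^4-2(x+x^*)^2+\frac{q(q+1)^2}{(q^2+1)^2}-\varepsilon\in\cA_+$ which is positive but not a sum of squares (indeed, by Theorem \ref{thm_motzkin1}$(i)$, since $\frac{q(q+1)^2}{(q^2+1)^2}-\varepsilon<\frac{q(q+1)^2}{(q^2+1)^2}$, it is not in $\sum\cA^2$). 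So the task reduces to constructing a positive functional $F$ that is \emph{not} strongly positive, and the cleanest way is to make $F$ positive on $\sum\cA^2$ but negative on $L$.

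First I would observe that since $L\notin\sum\cA^2$ and $\sum\cA^2$ is a convex cone in the real vector space $\cA_h$ of Hermitian elements, a separation/Hahn--Banach argument produces a real-linear functional that is nonnegative on $\sum\cA^2$ but strictly negative at $L$. The subtlety is that separation from a cone requires the cone to be closed (or the point to be separated from its closure); so the technical heart of the argument is to show that the point $L$ can be separated from $\sum\cA^2$, e.g. by checking that $\sum\cA^2$ is closed in an appropriate finite-dimensional slice. Concretely: fix $N$ large enough that $\deg L=4$ and work inside the finite-dimensional space $\cA_{h,\le 4}$ of Hermitian elements of degree at most $4$; the set of sums of squares of degree $\le 4$ is, by Lemma \ref{lemma_SDP}, the image of the cone of positive semidefinite $6\times 6$ matrices under the linear map $C\mapsto w_2^*Cw_2$, hence it is a closed convex cone (being the continuous linear image of a closed cone that is, moreover, "pointed enough" to make the image closed — this needs a brief justification, but it is a standard fact that the SOS cone in a fixed degree is closed). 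Since $L$ lies outside this closed cone, there is a linear functional $\Phi$ on $\cA_{h,\le4}$ with $\Phi\ge0$ on all degree-$\le4$ sums of squares and $\Phi(L)<0$.

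Next I would extend $\Phi$ to a positive functional on all of $\cA$. This is where one must be a little careful, because a functional that is merely nonnegative on low-degree squares need not extend to one nonnegative on $\sum\cA^2$. The standard trick is to add a large multiple of a "strictly positive" reference functional: pick any faithful positive functional $F_0$ on $\cA$ with $F_0(f^*f)>0$ for all $f\ne0$ of degree $\le 2$ (for instance $F_0=F_\mu$ for a suitable measure $\mu$, using Lemma \ref{lemma_cyc} / the GNS picture, or the vacuum-type functional $F_0(x^{*m}x^n)=\delta_{m0}\delta_{n0}$ which is easily checked positive). Then for $t>0$ set $F=F_0+t\,\widetilde\Phi$ where $\widetilde\Phi$ is any linear extension of $\Phi$ to $\cA$. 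For $t$ small, $F$ remains positive on the (compact, after normalization) set of squares $f^*f$ with $\deg f\le 1$; but one needs positivity on \emph{all} squares. The honest resolution is to run the separation argument not in a single degree but to directly invoke that $\sum\cA^2$ (all of it) is contained in the closed cone generated by the degree-$\le4$ squares after intersecting with $\cA_{h,\le4}$ — i.e. an element of $\sum\cA^2$ of degree $\le 4$ is automatically a sum of squares of elements of degree $\le 2$ (true by the grading/degree bound already used in Lemma \ref{lemma_SDP}). Hence a functional nonnegative on degree-$\le 4$ SOS is automatically nonnegative on $f^*f$ for \emph{every} $f\in\cA$ with $\deg f\le 2$, and by homogeneity considerations one still must handle higher-degree $f$; the clean fix is to take $F=F_0+t\widetilde\Phi$ and note $F(f^*f)=F_0(f^*f)+t\widetilde\Phi(f^*f)$, bound $|\widetilde\Phi(f^*f)|$ by the degree-$\le4$ part plus a controlled remainder, and choose the extension $\widetilde\Phi$ to vanish on a complement so that only the degree-$\le4$ part of $f^*f$ contributes — then $F(f^*f)\ge F_0(f^*f)+t\Phi(\text{deg}\le4\text{ part of }f^*f)$, and for $t$ small enough the strict positivity of $F_0$ dominates. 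With $t$ fixed small, $F$ is positive; and $F(L)=F_0(L)+t\widetilde\Phi(L)$, which is $<0$ provided $t$ is large enough relative to $F_0(L)$ — so one needs $F_0(L)$ not too large, which can be arranged by scaling $F_0$ down, i.e. replacing $F_0$ by $sF_0$ with $s$ small while keeping $t$ fixed. Balancing $s$ and $t$ gives the desired positive $F$ with $F(L)<0$.

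Finally, since $L\in\cA_+$ by Theorem \ref{thm_motzkin1}$(ii)$ but $F(L)<0$, the functional $F$ is not strongly positive, so by Theorem \ref{thm_hav} it is not a $q$-moment functional, while it \emph{is} positive by construction. This completes the proof. The main obstacle, as indicated, is the passage from "nonnegative on low-degree squares" to "nonnegative on $\sum\cA^2$" and keeping the separating functional's negativity at $L$ intact through this extension; everything else (closedness of the finite-degree SOS cone, existence of a strictly positive reference functional, the degree bookkeeping) is routine. An alternative, possibly slicker route that avoids the extension headache: note $\cA$ is a filtered algebra with finite-dimensional filtration pieces, and the cone $\sum\cA^2$ is the union of its degree-$\le 2k$ truncations; since $L$ has degree $4$ and $L\notin\sum\cA^2$, in particular $L$ is not in the closed degree-$\le4$ SOS cone, and one separates within $\cA_{h,\le4}$ \emph{after} adding $F_0$ to guarantee the separating functional is positive on all of $\cA$ — this is essentially the "localization at a positive functional" argument and I would present it in whichever form makes the degree bookkeeping shortest.
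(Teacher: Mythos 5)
Your overall strategy coincides with the paper's: take the element $L\in\cA_+\setminus\sum\cA^2$ supplied by Theorem \ref{thm_motzkin1}, separate it from the cone of sums of squares to obtain a positive functional $F$ with $F(L)<0$, and conclude via Theorem \ref{thm_hav} that $F$ cannot be a $q$-moment functional. The gap is in the separation step. You separate $L$ from the sums of squares only inside the finite-dimensional slice of degree at most $4$, obtaining a functional $\Phi$ nonnegative on degree-$\le 4$ sums of squares, and then try to upgrade it to a functional nonnegative on \emph{all} of $\sum\cA^2$ by forming $F=sF_0+t\widetilde\Phi$, where $\widetilde\Phi$ kills the monomials of degree $>4$. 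This does not work as stated. Writing $P_{\le 4}$ for the projection onto the span of monomials of degree at most $4$, you have $F(f^*f)=sF_0(f^*f)+t\,\Phi(P_{\le 4}(f^*f))$, and there is no uniform constant making $sF_0(f^*f)$ dominate $t\,|\Phi(P_{\le 4}(f^*f))|$ over the infinite-dimensional family of all $f$: the GNS vector $\pi_{F_0}(f)\varphi$ can be nearly zero by cancellation between the low- and high-degree parts of $f$ while $P_{\le 4}(f^*f)$ stays large. Moreover the two requirements pull in opposite directions and only the ratio $t/s$ matters: positivity of $F$ would force $t/s$ below some threshold, while $F(L)=sF_0(L)-t|\Phi(L)|<0$ forces $t/s>F_0(L)/|\Phi(L)|$, so "balancing $s$ and $t$" cannot be guaranteed to succeed. (Riesz's extension theorem is also unavailable here, since $E_4+\sum\cA^2\neq\cA$.)

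The paper's resolution removes the extension problem entirely: by Lemma \ref{lemma_closcone}, applied using the faithful $*$-representation from Lemma \ref{prop_faith} and the elementary fact that $\sum_i g_i^*g_i^{}\in E_{2k}$ forces $g_i\in E_k$, the \emph{whole} cone $\sum\cA^2$ is closed in the finest locally convex topology on $\cA$. In that topology every linear functional is continuous, so the separation theorem for convex sets applies globally on $\cA$ and yields in one stroke a linear functional $F$ with $F(\sum\cA^2)\geq 0$ and $F(L)<0$; no slice-by-slice extension or perturbation by a reference functional is needed. Your instinct that closedness of the finite-degree SOS cones is the technical heart of the matter was correct; what is missing is the step that assembles these finite-dimensional facts into closedness of the full cone in the finest locally convex topology, after which the separation is immediate.
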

\noindent Before we prove this we state two technical lemmas. The first one is taken from \cite{s3}, Lemma \nolinebreak 2.
\begin{lemma}\label{lemma_closcone}
Let $\cA$ be a unital $\ast$-algebra which has a faithful $\ast$-re\-pre\-sen\-ta\-ti\-on $\pi$ and is the union of a sequence of finite dimensional subspaces $E_n$, $n \in \dN$. Assume that for each $n \in \dN$ there exists a number $k_n\in \dN$ such that the following is satisfied: if $a \in \sum \cA^2$ is in $E_n$, then we can write $a$ as a finite sum $\sum_j a_j^*a_j^{}$ such that all $a_j$ are in $E_{k_n}$.\\ Then the cone $\sum \cA^2$ is closed in $\cA$ with respect to the finest locally convex topology on $\cA$.
\end{lemma}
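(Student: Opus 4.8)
The statement is Lemma~2 of \cite{s3}; here is how I would prove it. The plan is to reduce the closedness of $\sum\cA^2$ to a finite-dimensional fact and then, section by section, to exhibit $\sum\cA^2$ as a cone over a compact set. I would start from the standard property of the finest locally convex topology: a convex subset of a real vector space is closed in it precisely when its intersection with every finite-dimensional subspace is closed in the Euclidean topology of that subspace. Since $\sum\cA^2$ is a convex cone and the $E_n$ may be taken increasing with union $\cA$, it then suffices to prove that $\sum\cA^2\cap E_n$ is closed in $E_n$ for each fixed $n$.

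Next, fix $n$ and set $k:=k_n$. I would introduce the convex cone $K$ consisting of all finite sums $\sum_j a_j^\ast a_j^{}$ with $a_j\in E_k$. Trivially $K\subseteq\sum\cA^2$, so $K\cap E_n\subseteq\sum\cA^2\cap E_n$; and the hypothesis of the lemma says exactly that any $a\in\sum\cA^2\cap E_n$ is such a sum with all $a_j\in E_k$, so $\sum\cA^2\cap E_n\subseteq K$. Hence $\sum\cA^2\cap E_n=K\cap E_n$, and it remains to show that $K$ is closed in some finite-dimensional subspace of $\cA$ containing $E_n$. For this let $V:=\mathrm{span}\set{a^\ast a:a\in E_k}$, a finite-dimensional space containing $K$, and let $S_1$ be the image of the (compact) unit sphere of $E_k$ under the continuous quadratic map $a\mapsto a^\ast a$. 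A short computation using homogeneity shows $K=\set{tv:t\ge 0,\ v\in\mathrm{conv}(S_1)}$, i.e.\ $K$ is the cone over the compact convex set $\mathrm{conv}(S_1)$.

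The decisive step --- and the one I expect to be the main obstacle, though it is short --- is to check that $0\notin\mathrm{conv}(S_1)$, since this is exactly what forces $K$ to be closed. Here the faithfulness of $\pi$ enters: if $\sum_j\mu_j a_j^\ast a_j^{}=0$ with $\mu_j>0$, $\sum_j\mu_j=1$ and $\norm{a_j}=1$, then applying $\pi$ and pairing with an arbitrary $\xi\in\cD(\pi)$ gives $\sum_j\mu_j\norm{\pi(a_j)\xi}^2=0$, whence $\pi(a_j)=0$ and so $a_j=0$, contradicting $\norm{a_j}=1$. Thus $\mathrm{conv}(S_1)$ is a compact set not containing the origin, so the cone over it is closed in the finite-dimensional space $V$. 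Consequently $K$ is closed in the finite-dimensional subspace $V+E_n$, hence $\sum\cA^2\cap E_n=K\cap E_n$ is closed in $E_n$, and by the first paragraph $\sum\cA^2$ is closed in the finest locally convex topology of $\cA$. Everything besides the faithfulness step --- the Carath\'eodory argument for compactness of $\mathrm{conv}(S_1)$, the homogeneity identity for $K$, and closedness of cones over compacta avoiding $0$ --- is routine finite-dimensional bookkeeping.
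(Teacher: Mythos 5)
The paper does not actually prove this lemma: it is quoted from \cite{s3}, Lemma 2, so there is no internal proof to compare against. Your argument is correct and is, in substance, the standard proof of that result: reduce closedness in the finest locally convex topology to closedness of the finite-dimensional sections of the convex cone $\sum\cA^2$; use the hypothesis to identify $\sum\cA^2\cap E_n$ with $K\cap E_n$, where $K$ is the cone generated by $\{a^*a:\ a\in E_{k_n}\}$; and use faithfulness of $\pi$ to see that $K$ is the cone over a compact convex base avoiding the origin, hence closed in the finite-dimensional space it spans. The only step you pass over lightly is the reduction to the subspaces $E_n$ themselves rather than to arbitrary finite-dimensional subspaces; this is harmless, since $\cA=\bigcup_n E_n$ forces every finite-dimensional subspace of $\cA$ to lie in some $E_n$ (for a two-dimensional $F$, uncountably many lines of $F$ must meet countably many $E_n$, so some $E_n$ contains two distinct lines and hence $F$; then induct), so closedness of the sections $\sum\cA^2\cap E_n$ in $E_n$ already yields closedness of all finite-dimensional sections. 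I see no gaps.
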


\begin{lemma}\label{prop_faith}
Suppose that $\pi$ is a well-behaved representation such that $\pi(x)\not\equiv 0.$ Then $\pi$ is faithful.
\end{lemma}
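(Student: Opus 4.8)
The plan is to read the coefficients of $a$ off the polar-decomposition data of the $q$-normal operator underlying $\pi$. Write $X$ for that operator (so $\cD(\pi)=\cD^\infty(X)$ and $\pi(x)=X\upharpoonright\cD^\infty(X)$) and $X=UC$ for its polar decomposition; since $\pi(x)\neq 0$ we have $X\neq 0$, hence $C\neq 0$, so $\sigma(C)\cap(0,\infty)\neq\emptyset$. Suppose $\pi(a)=0$ with $a=\sum_{m,n}\alpha_{mn}x^{*m}x^n$; I want to conclude $\alpha_{mn}=0$ for all $m,n$. First I would use Lemma \ref{lemma_dense}$(i)$ (together with part $(iii)$, which makes $\cD^\infty(X)$ invariant under $U$ and $U^*$) to rewrite, on $\cD^\infty(X)=\cD^\infty(C)$,
\[
\pi(a)=\sum_{j\in\dZ}U^{j}Q_j(C),\qquad
Q_j(t):=\sum_{\substack{m,n\geq 0\\ n-m=j}}\alpha_{mn}\,q^{(m^2+m-n^2+n-2mn)/4}\,t^{\,m+n},
\]
a finite sum, with $U^{-k}:=U^{*k}$. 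Because the exponents $m+n$ occurring in a given $Q_j$ are pairwise distinct, $Q_j\equiv 0$ in $\dC[t]$ if and only if $\alpha_{mn}=0$ for all $(m,n)$ with $n-m=j$; so it suffices to show every $Q_j$ vanishes.

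Next I would separate the summands $U^{j}Q_j(C)$ by spectral localisation. By Proposition \ref{prop_char}$(v)$, $U^{j}E_C(\Delta)=E_C(q^{-j/2}\Delta)U^{j}$ for all Borel $\Delta\subseteq\dR_+$; and since a $q$-normal operator satisfies $\ker X=\ker X^*$, $U$ restricts to a unitary on $(\ker C)^\perp$. These together force $\sigma(C)\cap(0,\infty)$ to be invariant under the dilations $\lambda\mapsto q^{\pm1/2}\lambda$, hence --- using $q\neq 1$ --- to be an infinite set. Now for a fixed $j$ and a fixed $\nu\in\sigma(C)\cap(0,\infty)$ I would sandwich the identity $\pi(a)=0$ between spectral projections $E_C$ of sufficiently small open intervals around $q^{-j/2}\nu$ (on the left) and around $\nu$ (on the right), chosen so small that the finitely many dilates of these intervals that occur are pairwise disjoint and bounded away from $0$. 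Every term with index $j'\neq j$ then drops out, leaving $U^{j}Q_j(C)E_C(\Delta)=0$ for a small interval $\Delta\ni\nu$ with $0\notin\Delta$. Since $U^{j}$ is isometric on $\Ran E_C(\Delta)\subseteq(\ker C)^\perp$, this gives $Q_j(C)E_C(\Delta)=0$, and because $\nu$ lies in the spectrum of $C\upharpoonright\Ran E_C(\Delta)$ it follows that $Q_j(\nu)=0$. Letting $\nu$ run over the infinite set $\sigma(C)\cap(0,\infty)$ forces $Q_j\equiv 0$; since $j$ is arbitrary, $a=0$.

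The genuinely substantive point --- and the one I would be most careful about --- is that $\sigma(C)\cap(0,\infty)$ is infinite as soon as it is nonempty: this replaces the fact (valid only when $X$ is irreducible, cf. Proposition \ref{prop_des_irr}) that $\sigma(C)$ is a single geometric progression, and it is precisely here that the hypothesis $q\neq 1$ is indispensable (for $q=1$ the statement fails, e.g. the well-behaved representation of the normal operator $X=I$ is not faithful). The rest is bookkeeping: choosing the intervals small enough to decouple the finitely many indices $j$ appearing in $a$, and checking that the left and right spectral projections match up after commuting $U^{j}$ past $E_C$. An essentially equivalent route would be to first invoke Theorem \ref{thm_spec} to reduce to $X=X_\mu$ and then argue with the explicit formula for $X_\mu^{*k}X_\mu^{\,l}$ from Section \ref{q-momentproblem}, testing $\pi_\mu(a)$ against functions supported near a point of $\supp\mu\cap(0,\infty)$ and using the dilation-invariance $\mu(\Delta)=\mu(q^{1/2}\Delta)$.
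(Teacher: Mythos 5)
Your proof is correct and rests on the same key idea as the paper's: write $\pi(a)=\sum_j U^jQ_j(C)$ via Lemma \ref{lemma_dense}$(i)$ and Proposition \ref{prop_char}$(iv)$, use the disjointness of the $q^{j/2}$-dilates of a spectral set to decouple the summands, and conclude $Q_j\equiv 0$ because a nonzero polynomial cannot vanish on the $q^{1/2}$-dilation-invariant (hence, for $q\neq 1$, infinite) set where it is forced to vanish. The only difference is presentational: the paper first invokes Theorem \ref{thm_spec} to reduce to a concrete $X_{\mu_i}$ and tests against the indicator of a fundamental interval $q^{m/2}\Delta_q$ chosen to avoid the zeros of one nonzero coefficient polynomial, whereas you stay abstract and localise with spectral projections of $C$ --- the route you yourself flag as essentially equivalent at the end.
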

\begin{proof}
Since $\pi$ is well-behaved, there is a $q$-normal operator $X$ such that $\pi(x)=X\upharpoonright\cD^\infty(X).$ By Theorem \ref{thm_spec}, $X$ is a direct sum of operators $X_{\mu_i}$. Since $\pi(x)\neq 0$, $X_{\mu_i}\neq 0$ for one $i$.

Suppose that $f(x,x^*)\in\cA,\ f\neq 0.$ It suffices to prove that there exists a vector $\varphi\in\cD^\infty(X_{\mu_i})$ such that $f(X_{\mu_i}^{},X_{\mu_i}^*)\varphi\neq 0.$

The polar decomposition $X_{\mu_i}=U_{\mu_i} C_{\mu_i}$ is given by (\ref{eq_Umu_Cmu}). From Proposition \ref{prop_char}$(iv)$ it follows that there are polynomials $f_k\in\dC[t]$, $k=-n,\dots,n$ sich that $f(X_{\mu_i}^{},X_{\mu_i}^*)=\sum_{k=-n}^{n}U_{\mu_i}^k f_k(C_{\mu_i}).$ Since $f\neq 0$, there is a $j$ such that $f_{j}\neq 0.$ Put $\varphi=\One_{q^{m/2}\Delta_q}(t)$ and choose $m\in\dZ$ such that the interval $q^{m/2}\Delta_q$ contains no zero of $f_{j}(t).$ Then, since ${\mu_i}\neq 0,$ we have $\mu_i(q^{m/2}\Delta_q)\neq 0$ by (\ref{eq_muD}) and hence $\varphi\neq 0.$ Using (\ref{eq_Umu_Cmu}) we calculate
\begin{gather*}
f(X_\mu^*,X_\mu)\varphi=\sum_{k=-n}^{n}U_\mu^k f_k(t)\One_{q^{m/2}\Delta_q}(t)=\sum_{k=-n}^{n}f_k(q^{k/2}t)\One_{q^{(m-k)/2}\Delta_q}(t).
\end{gather*}
If the latter would be zero, then $f_k(q^{k/2}t)\One_{q^{(m-k)/2}\Delta_q}(t)\equiv 0$ in $L^2(\dR_+,d\mu)$ for all $k,$ in particular for $k=j$ which is a contradiction. Thus $f(X_\mu^*,X_\mu)\varphi\neq 0.$
\end{proof}
\noindent\textit{Proof of Theorem \ref{notmomentfunctional}:}
We denote by $E_k$ the subspace of elements $f\in\cA,\deg f\leq k.$ Obviously, $\sum g_i^*g_i^{}\in E_{2k}$ implies that $g_i\in E_k$ for all $i$. By Lemma \ref{prop_faith} $\cA$ has a faithful representation. Therefore, Lemma \ref{lemma_closcone} applies, so the cone $\sum\cA(q)^2$ is closed in the finest locally convex topoloogy.

By Theorem \ref{thm_motzkin1} there exists an element $L\in\cA_+$ such that $L\notin\sum\cA^2.$ Since $\sum\cA^2$ is closed, by the separation theorem for convex sets there is a linear functional $F$ on $\cA$ such that $F(L)<0$ and $F(\sum\cA(q)^2)\geq 0.$ By the latter condition, $F$ is a positive linear functional. Since $F$ is not strongly positive (by $F(L)<0$), it is not a moment functional by Theorem \ref{thm_hav}.
\hfill $\Box$

\begin{defn}
A densely defined operator $X$ on a Hilbert space $H$ is a \emph{formally $q$-normal operator} if $\cD(X)\subseteq\cD(X^*)$ and $\norm{Xf}=\sqrt{q}\norm{X^*f}$ for $f\in\cD(X).$
\end{defn}

It is well-known \cite{cod} that there exist formally normal operators which have no normal extensions in larger Hilbert spaces. The next theorem shows that a similar result holds for formally $q$-normal operators.

\begin{thm}\label{prop_noext}
There exists a formally $q$-normal operator $X$ which has no $q$-normal extension in a possibly larger Hilbert space.
\end{thm}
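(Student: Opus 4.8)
The plan is to mimic the classical construction of a formally normal operator without normal extensions (the Coddington example cited as \cite{cod}), but carried out inside a well-behaved representation of $\cA$ so that the obstruction to extendability becomes an obstruction at the level of positive functionals. Concretely, I would produce a positive linear functional $F$ on $\cA$ which is \emph{not} strongly positive — this is already available from Theorem \ref{notmomentfunctional}, since the functional $F$ constructed there is positive but satisfies $F(L)<0$ for the element $L\in\cA_+$ of Theorem \ref{thm_motzkin1}. By the GNS construction $F$ yields a $*$-representation $\pi_F$ of $\cA$ with cyclic vector $\varphi$, and I would set $X:=\overline{\pi_F(x)}$, or rather work with $\pi_F(x)$ restricted to the GNS domain. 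The first step is to check that $X$ is \emph{formally $q$-normal}: since $\pi_F$ is a $*$-representation, $\langle \pi_F(x)f,\pi_F(x)g\rangle=\langle f,\pi_F(x^*x)g\rangle$ and $\langle \pi_F(x^*)f,\pi_F(x^*)g\rangle=\langle f,\pi_F(xx^*)g\rangle$ for $f,g$ in the domain, and the defining relation $xx^*=qx^*x$ in $\cA$ gives $\|\pi_F(x^*)f\|^2=q\|\pi_F(x)f\|^2$; one then passes to the closure to obtain a closed operator with $\cD(X)\subseteq\cD(X^*)$ and the required norm identity, i.e. a formally $q$-normal operator.

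The second and decisive step is to show that this $X$ has no $q$-normal extension, even in a larger Hilbert space. Suppose $\widetilde X$ were a $q$-normal operator on $\widetilde\cH\supseteq\cH(\pi_F)$ extending $X$. Then $\widetilde X^{*m}\widetilde X^n$ extends $X^{*m}X^n=\pi_F(x^{*m}x^n)$ for all $m,n$ (using that $\cD(X)\subseteq\cD(X^*)$ forces $\widetilde X^*$ to extend $X^*$ on $\cD(X)$, hence products of the generators are extended), so the GNS vector $\varphi$ lies in $\cD^\infty(\widetilde X)$ after a cutoff argument — the key technical point being that $\varphi$, being cyclic and lying in the graph-topology domain, is actually in $\bigcap_n\cD(\widetilde X^n)$; here I would invoke Lemma \ref{lemma_dense}(i), which gives $\cD^\infty(\widetilde X)=\cD^\infty(\widetilde C)$ for the positive part $\widetilde C$ of the polar decomposition, together with the norm estimates $\|\widetilde X^{*m}\widetilde X^n\varphi\|=q^{\ast}\|\widetilde C^{m+n}\varphi\|$ to bootstrap from finitely many moments being finite to all of them. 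Once $\varphi\in\cD^\infty(\widetilde X)$, the functional $a\mapsto\langle\widetilde\pi(a)\varphi,\varphi\rangle$ (where $\widetilde\pi$ is the well-behaved representation attached to $\widetilde X$ via Definition \ref{defn_wellbeh}) agrees with $F$ on $\cA$, because $\langle\widetilde X^{*m}\widetilde X^n\varphi,\varphi\rangle=\langle\pi_F(x^{*m}x^n)\varphi,\varphi\rangle=F(x^{*m}x^n)$. But then $F$ would be a $q$-moment functional, hence strongly positive by Theorem \ref{thm_hav}, contradicting $F(L)<0$.

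The main obstacle I anticipate is the passage from "$\varphi$ is in the GNS domain and $\pi_F(x^{*m}x^n)\varphi$ is defined for all $m,n$" to "$\varphi\in\cD^\infty(\widetilde X)$" for the putative extension $\widetilde X$. The subtlety is that $\widetilde X^{*m}\widetilde X^n$ a priori only \emph{extends} the closure of $\pi_F(x^{*m}x^n)$, so knowing $\varphi$ is in the domain of every $\overline{\pi_F(x^{*m}x^n)}$ is not literally the statement that $\varphi\in\cD(\widetilde X^n)$ — one must show the iterated operator $\widetilde X^n$ (a genuine $n$-fold product) is defined at $\varphi$. I would handle this inductively: if $\widetilde X^{n}\varphi$ is known to be defined, the identity $\widetilde X^{*}\widetilde X^{n}=q^{\ast}\widetilde X^{*}\widetilde X^{n}$-type relations from Lemma \ref{lemma_dense}(i) applied to $\widetilde X$, combined with $\widetilde X^{*}$ extending $X^{*}$ and the $q$-normal norm balance $\|\widetilde X^{*}\psi\|=\sqrt q\,\|\widetilde X\psi\|$, let one control $\|\widetilde X^{n+1}\varphi\|$ by $\|\overline{\pi_F(x^{*(n+1)}x^{n+1})}\varphi\|<\infty$. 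Alternatively — and this may be cleaner — one replaces $\varphi$ by the cutoff vectors $\pi_F(\One_\vre)\varphi$ as in the proof of Theorem \ref{thm_hav}, which manifestly lie in $\cD^\infty$, and takes a graph-limit; closedness of $\widetilde X$ and of all its powers then delivers $\varphi\in\cD^\infty(\widetilde X)$. Everything else is bookkeeping with the relation $xx^*=qx^*x$ and the already-proved theorems.
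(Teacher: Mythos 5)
Your construction is essentially the paper's: take the positive but not strongly positive functional $F$ from the proof of Theorem \ref{notmomentfunctional}, form its GNS representation $\pi_F$ with cyclic vector $\varphi$, and set $X=\pi_F(x)$; formal $q$-normality follows from the defining relation exactly as you say. The only genuine divergence is at the end: the paper does not route through Theorem \ref{thm_hav}, but contradicts directly --- if $Y\supseteq X$ is $q$-normal, then $\langle L(Y,Y^*)\varphi,\varphi\rangle=\langle\pi_F(L)\varphi,\varphi\rangle=F(L)<0$, which is impossible since $L\in\cA_+$; your detour via ``$F$ would be a $q$-moment functional, hence strongly positive'' reaches the same contradiction with a heavier tool. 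More importantly, the ``main obstacle'' you anticipate is not actually there: the GNS domain $\cD(\pi_F)=\pi_F(\cA)\varphi$ is invariant under $\pi_F(x)$ and $\pi_F(x^*)$, so for any extension $Y\supseteq X$ a direct induction gives $Y^n\psi=\pi_F(x^n)\psi$ for $\psi\in\cD(\pi_F)$ (no closures enter), whence $\varphi\in\cD^\infty(Y)$ immediately; and $Y^*\psi=\pi_F(x^*)\psi$ on $\cD(\pi_F)$ because $Y^*\subseteq\pi_F(x)^*$, $\pi_F(x^*)\subseteq\pi_F(x)^*$, and $\cD(Y^*)=\cD(Y)\supseteq\cD(\pi_F)$ by $q$-normality of $Y$. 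Neither the inductive bootstrap nor the cutoff/graph-limit argument is needed.
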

\begin{proof}
We retain the notation from the proof of Theorem \ref{notmomentfunctional}. Let $\pi_F$ denote the GNS representation of $F$ with cyclic vector $\varphi$, see \cite{s4}. Then $F(a)=\langle\pi_F(a)\varphi,\varphi\rangle$ for $ a\in\cA$.

We show that $X:=\pi_F(x)$ is a formally $q$-normal operator which has no $q$-normal extension. Indeed, since $\pi_F$ is a $*$-representation of $\cA$, we have $\cD(X)=\cD(\pi_F)=\cD(\pi_F(x^*))\subseteq\cD(\pi_F(x)^*)=\cD(X^*).$ For $\psi\in\cD(X)$ we have
\begin{gather*}
\norm{X\psi}^2=\langle\pi_F(x)\psi,\pi_F(x)\psi\rangle=\langle \pi_F(x^*x)\psi,\psi\rangle=q^{-1}\langle \pi_F(xx^*)\psi,\psi\rangle=\\
=\langle X^*\psi,X^*\psi\rangle=q^{-1}\norm{X^*\psi}^2.
\end{gather*}

Assume that $Y$ is a $q$-normal operator on a (possible larger) Hilbert space such that $X\subseteq Y$. Then $L(X,X^*)\subseteq L(Y^{},Y^*)$ and hence
\begin{gather*}
\langle L(Y^{},Y^*)\varphi,\varphi\rangle=\langle L(X,X^*)\varphi,\varphi\rangle=\langle \pi_F(L)\varphi,\varphi\rangle=F(L)<0.
\end{gather*}
Since $L\in \cA_+$, this is a contradiction.
\end{proof}

\section{ A strict Positivstellensatz for $q$-polynomials}\label{strictpositivstellensatz}

The strict Positivstellensatz (Theorem \ref{thm_strpos}) proved in this section can be viewed as a $q$-analogue of the Reznick's Positivstellensatz \cite{rez}.

Let $f=\sum_{i,j}a_{ij}x^{*i}x^{j}\in\cA$ and $\deg f=m.$ We denote by $f_{m}=\sum_{\set{i+j=m}}a_{ij}x^{*i}x^{j}$ the highest order degree part of $f.$ We write $f_{m}$ as
$$
f_m=\sum_{r=0}^{\lfloor m/2\rfloor}b_r(x^*x)^rx^{m-2r}+\sum_{r=0}^{\lfloor m/2\rfloor-1}b_{-r}x^{*(m-2r)}(x^*x)^r,\ b_r\in\dC.
$$
The symbol of $f$ is the function $\sigma_f(\omega,\overline{\omega})$ on $\dC\setminus\set{0}$ defined by
\begin{align}\label{eq_symbol}
\sigma_f(\omega,\overline{\omega}):=\sum_{r=0}^{\lfloor m/2\rfloor}b_r|\omega|^{-r(m-2r)}\omega^{\frac{m}2-r}+
\sum_{r=0}^{\lfloor m/2\rfloor-1}b_{-r}\overline{\omega}^{\frac{m}2-r}|\omega|^{-r(m-2r)}.
\end{align}
Let $\cN$ denote the set consisting of $\Un$ and all finite products of elements $q^kx^*x+1,$ where $k\in\dZ.$

\begin{thm}\label{thm_strpos}
Let $f=f^*\in\cA$, $\deg f=4m,\ m\in\dN$. Suppose that:
\begin{enumerate}
	\item[$(i)$] For every $q$-normal operator $X$ there exists a $\varepsilon_X>0$ such that
        $$\langle f(X,X^*)\varphi, \varphi\rangle\geq \varepsilon_X\langle\varphi,\varphi\rangle,\ \varphi\in\cD^\infty(X),$$
	\item[$(ii)$]  $\sigma_f(\omega,\overline{\omega})>0$ for all $\omega\in\dT:=\set{z\in \dC:|z|=q^{1/2}}.$
\end{enumerate}
Then there exists an element $b\in\cN$ such that $b^*fb\in\sum\cA(q)^2.$
\end{thm}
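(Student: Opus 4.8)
The plan is to reduce everything to a statement about positive functions on $\Delta_q$ (or equivalently on the circle $\dT$ together with the radial coordinate) via the conditional expectation $\gp$ and the family of well-behaved representations $\pi_\mu$. First I would recall that, by Proposition \ref{prop_stand_qnorm} and Lemma \ref{lemma_cyc}, an element $h=h^*\in\cA$ satisfies $\pi_\mu(h)\geq 0$ for all $\mu$ satisfying (\ref{eq_muD}) if and only if $(\gp(g^*hg))(t)\geq 0$ on $\Delta_q$ for all $g\in\cA$; combined with Lemma \ref{lemma_prop_gp}$(ii)$ this is a positivity statement for a family of Laurent-type polynomials obtained by "twisting" the variable $x^*x$. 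Hypothesis $(i)$ says precisely that $f$ is bounded below in every well-behaved representation, while $(ii)$ controls the top-degree part $f_{4m}$ through its symbol on $\dT$. The idea is that $(ii)$ makes the top-order behaviour of $f$ \emph{strictly} positive "at infinity" on each of the one-parameter families of irreducible representations (Proposition \ref{prop_des_irr}), so that $f$ minus a small multiple of $(x^*x)^{2m}+1$ (or a similar $\cN$-weighted object) is still $\geq 0$ in every representation; then one clears denominators.

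The key steps, in order, would be: (1) On an irreducible $q$-normal operator with parameter $\lambda\in\Delta_q$ and orthonormal basis $\{e_k\}$, compute $f(X,X^*)$ as a Jacobi-type (band) operator; its diagonal blocks are governed by $\sigma_f$ evaluated along the orbit $\{\lambda q^{-k/2}\}$, up to lower-order corrections. Using $(ii)$, the leading coefficient is $\geq\delta>0$ uniformly in $k$, so for $|k|$ large the relevant finite truncations are positive definite with a uniform lower bound; (2) Use $(i)$ to handle the finitely many "middle" indices $k$: $(i)$ gives $f(X,X^*)\geq\varepsilon_X$, hence after subtracting $\varepsilon$ times a suitable element of $\cN$ the operator is still $\geq 0$ there. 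The outcome is: there exists $\varepsilon>0$ and $b_0\in\cN$ such that $g:=f-\varepsilon\, b_0^{-1}\cdots$ — more precisely one shows $b_1^* f b_1 - \varepsilon\,(\text{sum of squares})\in\cA_+$ for an appropriate $b_1\in\cN$, i.e. after a "nice" denominator $f$ dominates a strictly positive sum of squares plus a positive remainder; (3) Reduce the positive remainder, which is now an element of $\cA_+$ whose symbol is $0$ and which has strictly positive "interior", to a sum of squares. For this one uses the flat/graded structure: apply $\gp$, observe $\gp(g^*(\,\cdot\,)g)$ lands in $\dC[x^*x]$, and invoke the commutative Positivstellensatz on $\Delta_q$ (a compact interval away from $0$), where a strictly positive polynomial, after multiplying by a power of $q^kx^*x+1$, becomes a sum of squares of the form $p_0(t)+(q^k t+1)p_1(t)+\dots$; translating back through (\ref{eq_pimu}) and reassembling the off-diagonal (degree $\pm(4m), \pm(4m-2),\dots$) parts with the SDP criterion of Lemma \ref{lemma_SDP} gives $b^*fb\in\sum\cA^2$.

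The main obstacle I expect is step (1)–(2): making the passage from "bounded below in each representation" plus "symbol positive on $\dT$" to an honest \emph{uniform} lower bound $f - \varepsilon(\text{element of }\cN)\in\cA_+$ across the \emph{whole} family $\{\pi_\mu\}$ simultaneously. The difficulty is that $(i)$ only gives a representation-dependent $\varepsilon_X$, so one must extract a uniform constant, and the natural device is to use the two-sided control — the symbol condition $(ii)$ handles the "ends" $k\to\pm\infty$ of each orbit uniformly, while a compactness argument over the one remaining parameter $\lambda\in\Delta_q$ (a compact set) together with $(i)$ handles the bounded range; one has to check that the bad set of $\lambda$'s and the finitely many indices interact well, i.e. that the lower bound does not degenerate as $\lambda$ approaches the endpoints of $\Delta_q$. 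The second genuine difficulty is bookkeeping in step (3): choosing the denominator $b\in\cN$ so that \emph{all} the finitely many off-diagonal strips of $b^*fb$ are simultaneously realized as entries of a single positive semidefinite matrix in the sense of Lemma \ref{lemma_SDP}; the weights $q^k x^*x+1$ are designed exactly to absorb the $q$-powers that appear when commuting $x$ past $x^*$, so the computation should close, but it is the combinatorially heaviest part. Everything else — the polar-decomposition identities, density of polynomials in $L^2(\Delta_q)$, and the commutative one-dimensional Positivstellensatz — is routine given the results already established above.
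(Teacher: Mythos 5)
Your proposal assembles the right raw ingredients --- hypothesis $(ii)$ controls the ends $k\to\pm\infty$ of each orbit, hypothesis $(i)$ controls the interior, and denominators from $\cN$ must enter somewhere --- but it is missing the mechanism that actually converts positivity into a sum of squares, and the route you sketch in step (3) cannot close. The conditional expectation $\gp$ only records the diagonal part $\sum_n\alpha_{nn}x^{*n}x^n$ of an element; positivity of $(\gp(g^*fg))(t)$ on $\Delta_q$ for all $g$ characterizes membership in $\cA_+$, but membership in $\sum\cA^2$ is a strictly stronger condition that cannot be read off from diagonal data, and ``reassembling the off-diagonal strips with Lemma \ref{lemma_SDP}'' is precisely the step for which you give no argument. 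Indeed Proposition \ref{prop_motzkin_str} exhibits an element satisfying both $(i)$ and $(ii)$ which is \emph{not} a sum of squares, so no amount of uniform strict positivity inside $\cA$ (which, incidentally, you could obtain cleanly from Proposition \ref{prop_stand_qnorm} applied to the single universal operator $X_0$, rather than by your vaguer compactness-in-$\lambda$ argument) can by itself yield $f\in\sum\cA^2$; the denominator is essential, and one must explain \emph{why} conjugating by $(x^*x+1)^{-m}$ helps.

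The paper's mechanism is different and is the key idea you are missing: one realizes $\cA$ as operators on $\cD^\infty(\ov{x})$ and adjoins the bounded elements $z_k=(q^kx^*\ov{x}+1)^{-1}$, $v_k=xz_k$, $y_k=x^2z_k$; the $*$-algebra $\gY$ they generate is \emph{algebraically bounded}, so $\Un$ is an interior point of $\sum\gY^2$ and the Eidelheit separation argument (Lemma \ref{prop_strpossatz}) shows that an element of $\gY$ which is strictly positive in \emph{every} $*$-representation of $\gY$ lies in $\sum\gY^2$. The substance of the proof is then the classification of representations of $\gY$ into three types: $\pi_2$, coming from genuine $q$-normal operators, where hypothesis $(i)$ applies to $y:=z_0^mfz_0^m$; $\pi_1$, supported on $\ker\pi(\Un-z_0)$ (the ``origin''), where $f(0,0)>0$ applies; and $\pi_0$, supported on $\ker\pi(z_0)$ (the ``sphere at infinity''), where all $q^{k+1/2}y_k$ collapse to a single unitary $Y$ and $\pi_0(y)=\int_\dT\sigma_f(q^{1/2}\omega,q^{1/2}\ov{\omega})\,dE(\omega)$ --- this is exactly where hypothesis $(ii)$ is consumed. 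Your irreducible-representation/Jacobi-operator heuristic gestures at the same phenomenon but never produces the degenerate representations $\pi_0,\pi_1$, which do not come from $q$-normal operators; without them the separation argument is unavailable and the commutative Positivstellensatz on $\Delta_q$ has nothing to say about the off-diagonal terms. The proposal therefore has a genuine gap at its central step.
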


The proof of this theorem follows a similar pattern as the proof of the strict Positivstellensatz for the Weyl algebra given in \cite{s1}.
We first recall a basic definition and a result from \cite{s1}, see e.g. \cite{s2}.

A unital $\ast$-algebra $\cY$ is called {\it algebraically bounded} if for each element $y\in \cB$ there exists a $\lambda_y>0$ such that
\begin{align}\label{algbounded}
\lambda_y \cdot \Un - y^\ast y \in \sum \cY^2.
\end{align}
\begin{lemma}\label{prop_strpossatz}
Let $\cY$ be an algebraically bounded $*$-algebra and $y=y^*\in\cB$. If
\begin{align}\label{strictcondition}
\langle\pi(y)\varphi,\varphi\rangle>0 ~~{\rm for~all}~~ \varphi\in\cH_\pi,\ \varphi\neq 0,
\end{align}
for $*$-representation $\pi$ of $\cY$, then $y\in\sum\cY^2.$
\end{lemma}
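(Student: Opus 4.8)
The plan is to prove the contrapositive by a Hahn--Banach separation argument on the real vector space $\cY_{\mathrm h}=\{a\in\cY:a=a^*\}$ equipped with the cone $C:=\sum\cY^2$, the whole point being that algebraic boundedness makes $\Un$ an \emph{order unit} for $C$. Indeed, for $a=a^*$ one has $(a\pm\Un)^*(a\pm\Un)=a^2\pm 2a+\Un\in C$, while \eqref{algbounded} applied to $a$ gives $\lambda_a\Un-a^2\in C$; adding these yields $(\lambda_a+1)\Un\pm 2a\in C$, so $\mu_a\Un\pm a\in C$ for a suitable $\mu_a>0$. The same inequality shows that in \emph{every} $*$-representation $\pi$ each operator $\pi(a)$ is bounded: from $\|\pi(a)\varphi\|^2=\langle\pi(a^*a)\varphi,\varphi\rangle$ and $\pi(\lambda_a\Un-a^*a)\ge 0$ we obtain $\|\pi(a)\varphi\|\le\lambda_a^{1/2}\|\varphi\|$, so each $\pi$ extends to bounded operators on all of $\cH_\pi$. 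Conversely, the GNS construction attached to any linear functional $F$ with $F(C)\ge 0$ and $F(\Un)=1$ produces such a bounded $*$-representation $\pi_F$ with cyclic unit vector $\varphi$ satisfying $F(a)=\langle\pi_F(a)\varphi,\varphi\rangle$.

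Next I would translate the hypothesis into the language of states, where a \emph{state} is a functional $F$ with $F(C)\ge 0$ and $F(\Un)=1$. By the previous paragraph the states are exactly the vector functionals $a\mapsto\langle\pi(a)\varphi,\varphi\rangle$ with $\pi$ a $*$-representation and $\|\varphi\|=1$, so condition \eqref{strictcondition} is equivalent to $F(y)>0$ for every state $F$. Because $\mu_a\Un\pm a\in C$, every state satisfies $|F(a)|\le\mu_a$, hence states are continuous for the order-unit seminorm $\|a\|_u:=\inf\{\lambda>0:\lambda\Un\pm a\in C\}$ and lie in the unit ball of its dual. The state space $S$ is weak-$*$ closed in that ball, hence weak-$*$ compact by Banach--Alaoglu, and it is nonempty by extending $\lambda\Un\mapsto\lambda$ via Hahn--Banach. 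Since $F\mapsto F(y)$ is weak-$*$ continuous and strictly positive on the compact set $S$, it attains a minimum $\delta:=\min_{F\in S}F(y)>0$.

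Now $F(y-\delta\Un)\ge 0$ for all $F\in S$, and it remains to run the separation. Suppose $y-\delta\Un$ fails to lie in the closure $\overline{C}$ of $C$ in the order-unit seminorm. Then $\overline{C}$ is a closed convex cone, so by Hahn--Banach there is a functional $G\ne 0$ with $G\ge 0$ on $\overline{C}$ and $G(y-\delta\Un)<0$; the order-unit property forces $G(\Un)>0$, so $F:=G/G(\Un)$ is a state with $F(y-\delta\Un)<0$, contradicting the uniform bound. Hence $y-\delta\Un\in\overline{C}$. Finally one checks that $\overline{C}$ coincides with the archimedean closure $\{a:a+\varepsilon\Un\in C\ \forall\varepsilon>0\}$: if $\|c_n-a\|_u\to 0$ with $c_n\in C$, then for large $n$ one has $\varepsilon\Un-(c_n-a)\in C$, whence $a+\varepsilon\Un=(\varepsilon\Un-c_n+a)+c_n\in C$. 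Applying this to $a=y-\delta\Un$ with $\varepsilon=\delta$ gives $y\in C=\sum\cY^2$, as desired.

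The technical heart of the argument is the passage from the pointwise strict positivity in \eqref{strictcondition} to a \emph{uniform} lower bound $\delta>0$; this is precisely where compactness of the state space is indispensable, and it is what allows the conclusion to land inside the (generally non-closed) cone $\sum\cY^2$ rather than merely in its archimedean closure. Everything that makes this compactness available — boundedness of all representations, the order-unit seminorm, and continuity of states — rests on the algebraic boundedness hypothesis \eqref{algbounded}, so the main care lies in verifying these structural consequences, after which the separation step is routine Hahn--Banach.
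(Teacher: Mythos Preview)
Your argument is correct, but it is considerably more elaborate than the paper's, and your closing remark that ``compactness of the state space is indispensable'' is not accurate for this particular lemma. The paper proceeds by the direct contrapositive: assuming $y\notin\sum\cY^2$, one observes (exactly as you do) that algebraic boundedness makes $\Un$ an internal point of the cone $\sum\cY^2$ in $\cY_h$; then a single application of the Eidelheit separation theorem produces a linear functional $F$ with $F(\sum\cY^2)\ge 0$, $F(\Un)>0$, and $F(y)\le 0$. Feeding $F$ into the GNS construction yields a $\ast$-representation $\pi_F$ with cyclic vector $\varphi\neq 0$ and $\langle\pi_F(y)\varphi,\varphi\rangle=F(y)\le 0$, contradicting \eqref{strictcondition}. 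No state-space compactness, no uniform $\delta$, no archimedean-closure step is needed.

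Your route---reformulating \eqref{strictcondition} as strict positivity on states, invoking Banach--Alaoglu to extract a uniform lower bound $\delta>0$, separating $y-\delta\Un$ from $\overline{C}$, and then shifting back by $\delta\Un$---is a perfectly valid Kadison-type argument, and it does yield the marginally stronger byproduct that $y-\delta\Un$ lies in the archimedean closure of $\sum\cY^2$. But for the bare conclusion $y\in\sum\cY^2$ that extra structure is superfluous: one separating functional already suffices, because the hypothesis is violated as soon as a \emph{single} state with $F(y)\le 0$ exists. In short, both proofs hinge on the same fact (algebraic boundedness $\Rightarrow$ $\Un$ is an order unit/internal point), but the paper uses it once and stops, while you build the full order-unit functional-analytic apparatus on top of it.
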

\begin{proof}
Assume to the contrary that $y\notin \sum \cX^2$. Since $\cY$ is algebraically bounded, $\Un$ is an internal point of the wedge $\sum \cY^2$. Therefore, by the Eidelheit separation theorem for convex sets \cite{koethe}, there exists a linear functional $F$ on $\cY$ such that $F(y) \leq 0$, $F(\Un)> 0$, and $F(\sum\cY^2)\geq 0$. If $\pi_F$ denotes the GNS-representation of $F$ with cyclic vector $\varphi$, then $F(y)=\langle \pi_F(y)\varphi,\varphi\rangle \leq 0$. Since $F(\Un)=\|\varphi\|^2> 0$, the latter contradicts (\ref{strictcondition}).
\end{proof}

\medskip
The proof of the theorem will be divided into three steps.

\medskip
\noindent\textbf{I.} Let $\rho$ be a fixed well-behaved representation of $\cA$ such that $\rho(x)\neq 0.$ By Lemma \ref{prop_faith}, $\rho$ is faithful. For notational simplicity we identify $a\in \cA$ with $\rho(a)$. Then $\ov{x}$ is a $q$-normal operator and $\cA$ becomes a $*$-algebra of operators acting on the invariant dense domain $\cD(\rho)=\cD^{\infty}(\overline{x}).$ Define the following operators
$$y_k:=x^2(q^kx^*\ov{x}+1)^{-1},\ v_k:=x(q^kx^*\ov{x}+1)^{-1},\ z_k:=(q^kx^*\ov{x}+1)^{-1},\ k\in\dZ.$$
Is is easily checked that $y_k^{},\ y_k^*,\ v_k^{},\ v_k^*$ are bounded operators which map the domain $\cD^\infty(\overline{x})$ into itself. Let $\gX$ be the $*$-algebra of operators on $\cD^{\infty}(\overline{x})$ generated by $x,x^*,y_k^{},y_k^*,v_k^{},v_k^*,z_k,\ k\in\dZ.$ Then the follwing relations hold in $\gX:$
\begin{align}
\label{eq_a1}xz_k^{}=v_k^{},\ z_k^{}x^*=v_k^*,\ x^2z_k^{}=y_k^{},\ z_k^{}x^{*2}=y_k^*\\
\label{eq_a2}xz_k=z_{k+1}x,\ x^*z_k=z_{k-1}x^*\\
\label{eq_a3}z_kz_m=z_mz_k,\ \ z_k^*=z_k^{},\\
\label{eq_a4}q^{2k+1}y_k^*y_k^{}+q^kv_k^*v_k^{}+z_k=1,\ \ \ q^kv_k^*v_k^{}+z_k^2=z_k,\\
\label{eq_a5}y_m^*y_k^{}=y_k^*y_m^{},\ y_k^*y_k^{}=q^{-4}y_{k-2}^{}y_{k-2}^*,\ v_k^{}v_k^*=qv_{k+1}^*v_{k+1}^{},\\
\label{eq_a6}y_kz_m=z_{m+2}y_k,\ \ \ y_k^*z_m^{}=z_{m-2}^{}y_k^*,\\
\label{eq_a7}v_kz_m=z_{m+1}v_k,\ \ \ v_k^*z_m^{}=z_{m-1}^{}v_k^*,\\
\label{eq_a8}y_my_k^*=qy_{k+1}^*y_{m+1}^{},\ \ \ v_kz_m=v_mz_k,\\
\label{eq_a9}y_ky_m=y_{m+2}y_{k-2},\ y_m^*y_k^*=y_{k-2}^*y_{m+2}^*,\\
\label{eq_a10}q^kz_k(1-z_m)=q^mz_m(1-z_k),\\
\label{eq_a11}q^{k+m+1}y_k^*y_m^{}=(1-z_k)(1-z_m),\ k,m\in\dZ.
\end{align}
Let $\gY$ denote the subalgebra of $\gX$ generated by $\Un{},y_k^{},y_k^*,v_k^{},v_k^*,z_k,$ where $k\in\dZ.$ From (\ref{eq_a4}) it follows that condition (\ref{algbounded}) holds for the algebra generators $y=y_k^{},y_k^*,v_k^{},v_k^*,z_k$ of $\cY$. Hence $\gY$ is an algebraically bounded $\ast$-algebra by Lemma 2.1 in \cite{s1}. \\ Our next aim is to study representations of $\gY.$

\medskip
\noindent\textbf{II.} Suppose that $\pi$ is a non-zero $*$-representation of $\gY$ on a Hilbert space $\cH_\pi.$ Since $\gY$ is algebraically bounded, all operators $\pi(y)$, $y\in \cY$, are bounded, so we can assume that $\cD(\pi)=\cH_\pi.$ Let $\cH_0=\pi(z_0),\ \cH_1=\ker~\pi(\Un{-}z_0).$ By (\ref{eq_a10}) we have $\ker\pi(z_k)=\cH_0$ and $ \ker\pi(\Un{-}z_k)=\cH_1,\ k\in\dZ.$
From (\ref{eq_a4}) it follows that $\pi(v_k)\upharpoonright\cH_0=0$. The third relation in (\ref{eq_a5}) yields $\pi(v_k^*)\upharpoonright\cH_0=0.$ Further, (\ref{eq_a7}) implies that $\cH_0$ is invariant under $\pi(y_k^{}),\pi(y_k^*),\ k\in\dZ.$ It follows from (\ref{eq_a4}) that $\pi(v_k^{}),\pi(v_k^*),\pi(y_k^{}),\pi(y_k^*)$ restricted onto $\cH_1$ are $0.$ The preceding implies that $\cH_0$ and $\cH_1$ are invariant subspaces of the representation $\pi.$ Let $\pi=\pi_0\oplus\pi_1\oplus\pi_2$ be the corresponding decomposition of $\pi$ on $\cH_\pi=\cH_0\oplus\cH_1\oplus\cH_2.$

\medskip
Now we analyze the three subrepresentations $\pi_0,\pi_1$ and $\pi_2.$ We begin with $\pi_0$. By construction of $\pi_0$ we have $\pi_0(z_k)=\pi_0(v_k^{})=\pi_0(v_k^*)=0$ for
$ k\in\dZ.$ From (\ref{eq_a4}) we obtain $\pi_0(y_k^*y_k^{})=1/q^{2k+1}.$ By (\ref{eq_a5}), $\pi_0(y_k^{}y_k^*)=q^4\pi_0(y_{k+2}^*y_{k+2}^{})=1/q^{2k+1},$ so that $\pi_0(q^{k+1/2}y_k)$ is unitary. Finally, it follows from (\ref{eq_a11}) that all operators $\pi_0(q^{k+1/2}y_k)$ coincide. Hence there exists a unitary operator $Y$ on $\cH_0$ such that
\begin{gather}\label{eq_Y}
\pi_0(y_k)=q^{-k-\frac{1}2}Y,\ k\in\dZ.
\end{gather}

Next we consider $\pi_1$. As noted above, $\pi_1(\Un)=\pi_1(z_k)=I_{\cH_1}$ and $\pi_1(v_k^{})=\pi_1(v_k^*)=\pi_1(y_k^{})=\pi_1(y_k^*)=0.$

Finally, we turn to $\pi_2.$ It is convenient to introduce the notation
$$
Z_k:=\pi_2(z_k),\ V_k:=\pi_2(v_k),\ Y_k:=\pi_2(y_k).
$$
Then $\ker Z_k=\ker(I-Z_k)=\set{0}$ by the construction of $\pi_2$. Combined with (\ref{eq_a4}) we conclude that $0<Z_k<I.$ Further, (\ref{eq_a11}) implies that all operators $q^{-k}(Z_k^{-1}-1),k\in\dZ,$ are equal and positive. Set $C:=q^{-k/2}(Z_k^{-1}-1)^{1/2}.$ Then $Z_k=q^kC^2+1)^{-1}$ and using (\ref{eq_a4}) we get $|V_k|=C(q^kC^2+1)^{-1}.$ Let $V_k=U_k|V_k|$ be the polar decomposition of $V_k.$ Note that $\ker C=\set{0}.$ From (\ref{eq_a8}) we derive
$$U_kC(q^kC^2+1)^{-1}(q^mC^2+1)^{-1}=U_mC(q^mC^2+1)^{-1}(q^kC^2+1)^{-1},\ k,m\in\dZ.$$
This implies that all operators $U_k,\ k\in\dZ,$ are equal. Set $U:=U_k$. Since $\ker V_k^{}=\ker V_k^*=\set{0},$ $U$ is unitary. By (\ref{eq_a7}) we have
$$UC(q^kC^2+1)^{-1}(q^mC^2+1)^{-1}=(q^{m+1}C^2+1)^{-1}UC(q^kC^2+1)^{-1},\ k,m\in\dZ,$$
and since $C(q^kC^2+1)^{-1}$ is invertible,
$$
U(q^mC^2+1)^{-1}=(q^{m+1}C^2+1)^{-1}U.
$$
The latter is equivalent to $UC^2U^\ast=qC^2.$ Therefore, by Proposition \ref{prop_char}, $X:=UC=V_kZ_k^{-1}$ is a $q$-normal operator. Further, $\pi_2$ leaves $\cD^\infty(X)=\cD^\infty(C)$ invariant. Indeed, by construction this is true for the generators and hence for all elements of $\cY$.

Let $h\in\gX.$ Using the relations (\ref{eq_a2}) and $(q^kx^*x+1)z_k=\Un_\gX$ it follows that $h$ is of the form $h=h_1z_{k_1}z_{k_2}\dots z_{k_m}$, where $h_1\in\cA$ and $ k_1,\dots,k_m\in\dZ.$ Since the operators $z_{k_1},\dots,z_{k_m}$ map $\cD^\infty(\overline{x})$ bijectively onto itself, $h=0$ if and only if $h_1=0.$ Therefore, the $\ast$-representation $\pi_2$ gives rise to a unique $\ast$-representation $\widetilde{\pi}_2$ of $\cX$ on $\cD^{\infty}(X)$ defined by $\widetilde{\pi}_2(y)= \pi_2(y)\upharpoonright\cD^\infty(X)$ for $y\in \cY$,
\begin{align*}
\widetilde{\pi}_2(x)= X\upharpoonright\cD^\infty(X),~{\rm and}~~ \widetilde{\pi}_2(z_k) =(q^kX^*X+1)^{-1}\upharpoonright\cD^\infty(X),~ k\in\dZ.
\end{align*}

\medskip
\noindent\textbf{III.} Now let
$f$ be as in Theorem \ref{thm_strpos} and let $f_{4m}=\sum_{i+j=4m}a_{ij}x^{*i}x^{j}$ be its highest degree part. From (\ref{eq_a1}) and (\ref{eq_a2}) it follows that $$y:=z_0^mf(x,x^*)z_0^m\in\gY.$$
Our next aim is to apply Lemma \ref{prop_strpossatz} in order to conclude that $y\in \sum \cY^2.$

Let $\pi=\pi_0\oplus\pi_1\oplus\pi_2$ be a representation of $\gY$ as analyzed above.

\medskip
First we determine $\pi_0(y).$
Suppose $i,j\in \dN_0$ and $i+j<4m$. Applying the relations (\ref{eq_a1}) and (\ref{eq_a2}) it follows that $z_0^mx^{*i}x^{j}z_0^m=w_1w_2\dots w_s$,
where each $w_l$ is equal to one of the elements $y_k^{},y_k^*,v_k^{},v_k^*, z_0.$ Since $i+j=4m$, not all elements $w_l$ can be equal to some $y_k.$ Therefore, since $\pi_0(z_k)=\pi_0(v_k)=\pi_0(v_k^*)=0,$ we obtain $\pi_0(z_0^mx^{*i}x^{j}z_0^m)=0.$
Hence $\pi_0(y)=\pi_0(z_0^mfz_0^m)=\pi_0(z_0^mf_{4m}z_0^m).$
Now we write
\begin{gather}\label{eq_f4m}
f_{4m}=\sum_{k=0}^{2m}b_k(x^*x)^{2m-k}x^{2k}+\sum_{k=1}^{2m}b_{-k}x^{*2k}(x^*x)^{2m-k}.
\end{gather}
For the monomial $(x^*x)^{2m-k}x^{2k}$ we treat the cases $k\leq m$ and $k>m.$
First suppose that $k\leq m.$ Using relations (\ref{eq_a1})-(\ref{eq_a3}) we compute
\begin{gather*}
z_0^m(x^*x)^{2m-k}x^{2k}z_0^m=z_0^{m}(x^*x)^{2m-k}z_{2k}^{m-k}x^{2k}z_0^{k}=\\
=(z_0 x^*x)^{m} (x^*x z_{2k})^{m-k}x^{2k}z_0^{k}=\\
=(\Un-z_0)^{m}(q^{-2k}(\Un-z_{2k}))^{m-k}(x^2 z_{2k-2})(x^2 z_{2k-4})\dots (x^2 z_0)=\\
=(\Un-z_0)^{m}(q^{-2k}(\Un-z_{2k}))^{m-k}y_{2k-2}y_{2k-4}\dots y_0.
\end{gather*}
Applying $\pi_0$ to both sides and using (\ref{eq_Y}) we derive
\begin{gather*}
\pi_0(z_0^m(x^*x)^{2m-k}x^{2k}z_0^m)=q^{-2k(m-k)}\pi_0(y_{2k-2}y_{2k-4}\dots y_0)=\\
=q^{-2k(m-k)}q^{-k^2+k/2}Y^k=|q^{1/2}|^{-2k(2m-k)}(q^{1/2}Y)^k.
\end{gather*}
In the case $k>m$ we obtain
\begin{gather*}
z_0^m(x^*x)^{2m-k}x^{2k}z_0^m=z_0^{2m-k}(x^*x)^{2m-k}z_0^{k-m}x^{2k}z_0^{m}=\\
=(z_0 x^*x )^{2m-k}\left(z_0^{k-m}x^{2(k-m)}\right)\left(x^{2m}z_0^{m}\right)=\\
=(\Un-z_0)^{2m-k}(x^2 z_{-2})(x^2 z_{-4})\dots (x^2 z_{-2(k-m)})\times\\
\times(x^2 z_{2(m-1)})(x^2 z_{2(m-2)})\dots (x^2 z_{0})=\\
=(\Un-z_0)^{2m-k}y_{-2}y_{-4}\dots y_{-2(k-m)}\cdot y_{2(m-1)}y_{2(m-2)}\dots y_{0}.
\end{gather*}
Again, applying $\pi_0$ and using (\ref{eq_Y}) we get
\begin{gather*}
\pi_0(z_0^m(x^*x)^{2m-k}x^{2k}z_0^m)=\\
=\pi_0(y_{-2}y_{-4}\dots y_{-2(k-m)})\pi_0(y_{2(m-1)}y_{2(m-2)}\dots y_{0})=\\
=q^{(k-m)(k-m+1)-\frac{k-m}2}Y^{k-m}q^{-m(m-1)-\frac{m}2}Y^m=|q^{1/2}|^{-2k(2m-k)}(q^{1/2}Y)^k.
\end{gather*}
Proceeding in a similar manner we derive
$$\pi_0(z_0^m x^{*2k}(x^*x)^{2m-k}z_0^m)=|q^{1/2}|^{-2k(2m-k)}(q^{1/2}Y^*)^k.$$
Let $Y=\int_\dT \omega~dE(\omega)$ be the spectral decomposition of the unitary operator $Y$. Comparing the preceding computations with the definition of $\sigma_f$ we get $\pi_0(y)= \int_\dT \sigma_f(q^{1/2}\omega,q^{1/2}\overline{\omega}) ~dE(\omega).$
From assumption (ii) it follows that there exists $\vre >0$ such that $\sigma_f(q^{1/2}\omega,q^{1/2}\overline{\omega})\geq \vre $ for $\omega \in \dT$. Hence
\begin{align*}
\langle \pi_0(y)\psi,\psi \rangle = \int_\dT \sigma_f(q^{1/2}\omega,q^{1/2}\overline{\omega})~d\langle E(\omega)\psi,\psi\rangle \geq \vre \|\psi\|^2 ,~~\psi \in\cH_0.
\end{align*}

\medskip
Applying assumption (i) to the $q$-normal operator $X=0$ yields $a_{00}=f(0,0)>0.$ By (\ref{eq_a1}) and (\ref{eq_a2}) we have $\pi_1(y)=\pi(a_{00}z_0^{2m})=a_{00}\cdot I.$

\medskip
Finally we turn to $\pi_2.$ As shown above, there exists a $*$-representation $\widetilde{\pi}_2$ of the $\ast$-algebra $\gX$ on $\cD^\infty(X)$ such that $\widetilde{\pi}_2\upharpoonright\cA$ is well-behaved and $\widetilde{\pi}_2(y)=\pi_2(y)\upharpoonright\cD^\infty(X)$ for $y\in \cY.$
Using assumption (i) of the theorem we obtain for $\zeta\in\cD^\infty(X),$
\begin{gather*}
\langle\pi_2(y)\zeta,\zeta\rangle=\langle\pi_2(z_0^mfz_0^m)\zeta,\zeta\rangle=\langle\widetilde{\pi}_2(z_0^mfz_0^m)\zeta,\zeta\rangle=\\
=\langle\widetilde{\pi}_2(f)\widetilde{\pi}_2(z_0^m)\zeta,\widetilde{\pi}_2(z_0^m)\zeta\rangle\geq\varepsilon_X\norm{\widetilde{\pi}_2(z_0^m)\zeta}^2=\varepsilon_X\norm{\pi_2(z_0^m)\zeta}^2.
\end{gather*}
Since $\pi_2(y)$ and $\pi_2(z_0)$ are bounded operators and $\ker~\pi_2(z_0)=\{0\}$ by construction, we conclude that $\langle\pi_2(y)\zeta,\zeta\rangle>0$ for all $\zeta\in\cH_2,$ $\zeta\neq 0.$

Since $\pi=\pi_0\oplus\pi_1\oplus\pi_2$, it follows from the preceding analysis that $\langle \pi(y)\psi,\psi\rangle >0$ for all $\psi\in \cH_\pi$, $\psi\neq 0$. Therefore, by Lemma \ref{prop_strpossatz},
$$g=\sum_{i=1}^rg_i^*g_i^{}\in\sum\gY^2.$$
The relations (\ref{eq_a2}) imply that in the algebra $\cX$ each $g_i\in \cY$ can be written $g_i=f_ih_i,$ where $f_i\in\cA$ and $h_i$ is a finite product of elements $z_j$. That is, $h_i^{-1}\in\cN\subseteq\cA.$ Multiplying both sides of the equation
$$
g=z_0^mfz_0^m=\sum_{i=1}^r(f_ih_i)^*f_ih_i
$$
by $(h_1 h_2\dots h_r z_0^m)^{-1}$ from the left and from the right we obtain
$$
bfb=\sum_{i=1}^r\widetilde{f}_i^*\widetilde{f}_i^{}\in\cA(q)^2,
$$
where $\widetilde{f}_i=f_ih_i (h_1 h_2\dots h_r z_0^m)^{-1}\in\cA$ and $b=(h_1 h_2\dots h_r)^{-1}\in\cN.$

\hfill$\Box$

The next example illustrates the assertion of the strict Positivstellensatz. Its proof is analogous to the proof of Theorem \ref{thm_motzkin1}.

\begin{prop}\label{prop_motzkin_str}
Suppose that $q=1/2.$ Then:
\begin{enumerate}
\item[$(i)$] $L:=(x x^*)^2-(x+x^*)^2+3.7\notin\sum \cA^2,$
\item[$(iii)$] $L$ satisfies the assumptions $(i)$ and $(ii)$ in Theorem \ref{thm_strpos},
\item[$(ii)$] $(1+qx^*x)L(1+qx^*x)\in \sum \cA^2.$
\end{enumerate}
\end{prop}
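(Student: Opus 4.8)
The plan is to establish the three assertions separately, following the pattern of the proof of Theorem~\ref{thm_motzkin1}.

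\textbf{The assertion $L\notin\sum\cA^2$.} Using $xx^\ast=qx^\ast x$ one writes $L=q^{3}x^{\ast2}x^{2}-x^{2}-x^{\ast2}-(q+1)x^\ast x+3.7$, so $\deg L=4$; for $q=\tfrac12$ this is $L=\tfrac18 x^{\ast2}x^{2}-x^{2}-x^{\ast2}-\tfrac32 x^\ast x+3.7$. By Lemma~\ref{lemma_SDP} with $N=2$, if $L\in\sum\cA^2$ then $L=w_2^\ast Cw_2$ for a positive semidefinite $6\times 6$ matrix $C=[c_{i,j}]$. Comparing coefficients of the monomials $x^{\ast m}x^{n}$, $m+n\le4$, in $L=w_2^\ast Cw_2$ produces a linear system for the $c_{i,j}$, exactly as in the proof of Theorem~\ref{thm_motzkin1}(i). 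The plan is to exhibit explicit scalars $\alpha_i$ so that the corresponding combination of these equations takes the form $3.7-c_0=\mathrm{Tr}(MC)$, where $M$ is a positive semidefinite matrix assembled from the $\alpha_i$ and $c_0$ is the sharp sum-of-squares constant of the family $(xx^\ast)^2-(x+x^\ast)^2+c$ at $q=\tfrac12$. A short numerical computation gives $c_0>3.7$, hence $\mathrm{Tr}(MC)\ge0$ while $3.7-c_0<0$ — a contradiction. So no such $C$ exists and $L\notin\sum\cA^2$.

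\textbf{The assertion $(1+qx^\ast x)L(1+qx^\ast x)\in\sum\cA^2$.} This is the crux. Put $g:=(1+qx^\ast x)L(1+qx^\ast x)$; then $\deg g=8$, so by Lemma~\ref{lemma_SDP} with $N=4$ the element $g$ is a sum of squares exactly when $g=w_4^\ast Cw_4$ for a positive semidefinite $15\times 15$ matrix $C$. Solving this finite feasibility problem for $q=\tfrac12$ and factoring $C$ yields explicit $u_1,\dots,u_r\in\cA$ with $g=\sum_i u_i^\ast u_i$, which is then verified by expanding both sides with $xx^\ast=qx^\ast x$, just as identity~(\ref{eq_L_sos}) was verified. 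It is essential that the denominator $1+qx^\ast x$ be used \emph{non-centrally}: replacing $(x+x^\ast)^2$ by the central majorant $(1+\sqrt q)^2 x^\ast x$ (legitimate because $(1+\sqrt q)^2 x^\ast x-(x+x^\ast)^2\in\sum\cA^2$) would only make $g$ a sum of squares for $c\ge (1+\sqrt q)^4/(4q^2)$, which at $q=\tfrac12$ is about $8.49$, far above $3.7$; the cancellations that lower the threshold below $3.7$ come from $x(1+qx^\ast x)=(1+q^{2}x^\ast x)x$ and its adjoint, which slide the factors $1+qx^\ast x$ past $x,x^\ast$. I would moreover carry out this step with a bit of slack, i.e. produce $\delta>0$ with $(1+qx^\ast x)(L-\delta)(1+qx^\ast x)\in\sum\cA^2$ (possible by taking the Gram matrix $C$ strictly positive definite), which is used next. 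The main obstacle of the proposition lies precisely here — the $15\times 15$ semidefinite system is cumbersome by hand, and one must confirm in particular that a single factor from $\cN$ already suffices as denominator.

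\textbf{The assertion that $L$ satisfies (i) and (ii) of Theorem~\ref{thm_strpos}.} The top-degree part of $L$ is $q^{3}x^{\ast2}x^{2}=q^{2}(x^\ast x)^{2}$, so by (\ref{eq_symbol}) its symbol is the constant $\sigma_L(\omega,\ov{\omega})=q^{2}>0$; this settles assumption (ii). For assumption (i) I would invoke the refined step above: if $(1+qx^\ast x)(L-\delta)(1+qx^\ast x)\in\sum\cA^2$, then for every $q$-normal operator $X$ one has $(1+qX^\ast X)(L-\delta)(X,X^\ast)(1+qX^\ast X)\ge0$ on $\cD^\infty(X)$. Since $1+qX^\ast X$ is self-adjoint and its inverse $(1+qX^\ast X)^{-1}$ commutes with $X^\ast X$, that operator maps $\cD^\infty(X)=\cD^\infty(X^\ast X)$ bijectively onto itself (using Lemma~\ref{lemma_dense}), so with $\eta=(1+qX^\ast X)\zeta$ ranging over all of $\cD^\infty(X)$ we get $\langle (L-\delta)(X,X^\ast)\eta,\eta\rangle\ge0$, i.e. $\langle L(X,X^\ast)\varphi,\varphi\rangle\ge\delta\norm{\varphi}^{2}$ for all $\varphi\in\cD^\infty(X)$; thus (i) holds with $\vre_X=\delta$. (Alternatively, (i) can be checked directly via Theorem~\ref{thm_spec} and Proposition~\ref{prop_des_irr}: it suffices to treat $X=0$, where $L(0,0)=3.7>0$, and the irreducible $q$-normal operators with parameter $\lambda\in\Delta_q$, on which $L(X,X^\ast)$ splits into Jacobi operators with diagonal entries $\tfrac14 s_k^{2}-\tfrac32 s_k+3.7\ge1.45$, $s_k=\lambda^2q^{-k}$, and off-diagonal entries of order $s_k$; the diagonal part is uniformly positive, but a positive lower bound for the whole operator needs a careful weight estimate, so the route through the preceding step is preferable.)
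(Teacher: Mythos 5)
The paper itself gives no argument for this proposition beyond the single sentence that "its proof is analogous to the proof of Theorem \ref{thm_motzkin1}", and your outline is exactly the intended expansion of that sentence: the same Gram-matrix/duality scheme of Lemma \ref{lemma_SDP} for the non-membership in $\sum\cA^2$, an explicit sum-of-squares identity for $(1+qx^*x)L(1+qx^*x)$ analogous to (\ref{eq_L_sos}), and the symbol computation $\sigma_L\equiv q^2>0$ for assumption $(ii)$ of Theorem \ref{thm_strpos}. Your derivation of assumption $(i)$ of the Positivstellensatz from a slack version $(1+qx^*x)(L-\delta)(1+qx^*x)\in\sum\cA^2$, using that $1+qX^*X$ maps $\cD^\infty(X)$ bijectively onto itself, is clean and correct (and wisely avoids the direct Jacobi-operator estimate, whose off-diagonal terms grow like the diagonal ones). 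The one thing separating this from a complete proof is that the two computational certificates — the dual vector $(\alpha_i)$ showing the sharp constant exceeds $3.7$, and the explicit factorization of the $15\times15$ Gram matrix for $(1+\tfrac12 x^*x)L(1+\tfrac12 x^*x)$ — are announced but not produced; since these constitute the actual content of parts $(i)$ and $(ii)$, you should carry them out (a numerical SDP solve followed by rationalization is the practical route), exactly as the identities (\ref{eq_c4}) and (\ref{eq_L_sos}) are written out explicitly in the proofs of Proposition \ref{prop_sos_deg2} and Theorem \ref{thm_motzkin1}.
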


\bibliographystyle{amsalpha}

\end{document}